\theoremstyle{plain}
\newtheorem{theo}{Theorem}[section]
\newtheorem{lemme}[theo]{Lemma}
\newtheorem{prop}[theo]{Proposition}
\newtheorem{coro}[theo]{Corollary}
\theoremstyle{definition}
\newtheorem{defn}[theo]{Definition}
\newtheorem{nota}[theo]{Notation} 
\newtheorem{conj}[theo]{Conjecture}
\theoremstyle{remark}
\newtheorem{rema}[theo]{Remark}
\def\Sp{{\rm Sp}}
\def\J{{\rm J}}
\def\U{{\rm U}}
\def\M{{\rm M}}
\def\det{{\rm det}}
\def\O{{\rm O}}
\def\GL{{\rm GL}}	
\def\End{{\rm End}}
\def\tr{{\rm tr}}
\def\G{{\rm G}}
\def\C{{\rm C}}
\def\N{{\rm N}}
\def\S{{\rm S}}
\def\exp{{\rm exp}}
\def\dim{{\rm dim}}
\def\diag{{\rm diag}}
\def\Im{{\rm Im}}
\def\Op{{\rm Op}}
\def\Hom{{\rm Hom}}
\def\supp{{\rm supp}}
\def\Ad{{\rm Ad}}
\def\rk{{\rm rk}}
\def\H{{\rm H}}
\def\Re{{\rm Re}}
\def\reg{{\rm reg}}
\def\Z{{\rm Z}}
\def\ch{{\rm ch}}
\def\sign{{\rm sign}}
\def\K{{\rm K}}
\def\P{{\rm P}}
\def\Id{{\rm Id}}
\def\Mat{{\rm Mat}}
\def\E{{\rm E}}
\def\A{{\rm A}}
\def\T{{\rm T}}
\def\Chc{{\rm Chc}}
\def\Ker{{\rm Ker}}
\def\D{{\rm D}}
\def\L{{\rm L}}
\def\ch{{\rm ch}}
\def\sh{{\rm sh}}
\def\Lie{{\rm Lie}}
\def\i{{\rm i}}
\def\V{{\rm V}}
\def\W{{\rm W}}
\def\M{{\rm M}}
\def\X{{\rm X}}
\def\Y{{\rm Y}}
\def\st{{\rm st}}
\def\min{{\rm min}}
\def\d{{\rm d}}
\def\Eig{{\rm Eig}}
\def\min{{\rm min}}
\def\b{{\rm b}}
\def\sgn{{\rm sgn}}
\def\B{{\rm B}}
\def\loc{{\rm loc}}
\title{Characters of irreducible unitary representations of $\U(n, n+1)$ via double lifting from $\U(1)$}
\author{Allan Merino}
\address{ Department of Mathematics \\ National University of Singapore \\ Block S17 \\ 10, Lower Kent Ridge Road \\ Singapore 119076 \\ Republic of Singapore}
\email{matafm@nus.edu.sg}
\keywords{Howe correspondence, Characters, Cauchy--Harish-Chandra integral, Orbital Integrals}
\subjclass[2010]{Primary: 22E45; Secondary: 22E46, 22E30.}
\date{}
\begin{document}

\maketitle

\begin{abstract}

In this paper, we obtained character formulas of irreducible unitary representations of $\U(n, n+1)$ by using Howe's correspondence and the Cauchy--Harish-Chandra integral. The representations of $\U(n, n+1)$ we are dealing with are obtained from a double lifting of a representation of $\U(1)$ via the dual pairs $(\U(1), \U(1, 1))$ and $(\U(1, 1), \U(n, n+1))$.

\end{abstract}

\section{Introduction}

For a finite dimensional representation $(\Pi, \V)$ of a group $\G$, one can associate a function $\Theta_{\Pi}$ on $\G$ given by
\begin{equation*}
\Theta_{\Pi}: \G \ni g \to \tr(\Pi(g)) \in \mathbb{C}\,.
\end{equation*}
The function $\Theta_{\Pi}$ is the character of the representation $(\Pi, \V)$, and it determines entirely the representation. Obviously, if we remove the assumption that $\V$ is finite dimensional, the map $\Theta_{\Pi}$ does not necessarily makes sense in general. In \cite[Section~5]{HAR3}, Harish-Chandra extended the concept of character for a particular class of representations of a real reductive Lie group. More precisely, he proved that for a quasi-simple representation $(\Pi, \mathscr{H})$ (see \cite[Section~10]{HAR1}) of a real reductive Lie group $\G$, the operator $\Pi(\Psi), \Psi \in \mathscr{C}^{\infty}_{c}(\G)$, given by 
\begin{equation*}
\Pi(\Psi) = \displaystyle\int_{\G}\Psi(g)\Pi(g)dg\,,
\end{equation*}
is a trace class operator and the corresponding map 
\begin{equation*}
\Theta_{\Pi}: \mathscr{C}^{\infty}_{c}(\G) \ni \Psi \to \tr(\Pi(\Psi)) \in \mathbb{C}
\end{equation*}
is a distribution; $\Theta_{\Pi}$ is usually called the distribution character of $\Pi$. Moreover, Harish-Chandra proved (see \cite[Theorem~2]{HAR4}) that there exists a locally integrable function $\Theta_{\Pi}$ on $\G$, analytic on $\G^{\reg}$ (where $\G^{\reg}$ is the set of regular points of $\G$, see \cite[Section~3]{HAR4}), such that
\begin{equation*}
\Theta_{\Pi}(\Psi) = \displaystyle\int_{\G} \Theta_{\Pi}(g)\Psi(g)dg, \qquad \left(\Psi \in \mathscr{C}^{\infty}_{c}(\G)\right)\,.
\end{equation*}
The locally integrable function $\Theta_{\Pi}$ is the character of $\Pi$. In few cases, an explicit value of $\Theta_{\Pi}$ is well-known:
\begin{enumerate}
\item $\G$ compact (H. Weyl),
\item $(\Pi, \mathscr{H})$ a discrete series representation:
\begin{enumerate}
\item Harish-Chandra (see \cite{HAR5}) established a formula for $\Theta_{\Pi}$ on the compact Cartan subgroup $\T$ of $\G$,
\item Hecht (see \cite{HEC}) determined the value of $\Theta_{\Pi}$ on every Cartan subgroup of $\G$ for holomorphic discrete series representation,
\end{enumerate}
\item $(\Pi, \mathscr{H})$ irreducible unitary highest weight module (Enright, \cite[Corollary~2.3]{ENR}, see also \cite{MER3}).
\end{enumerate}

\noindent The goal of this paper is to explain how to use Howe's correspondence and the Cauchy--Harish-Chandra integral introduced by T. Przebinda to get explicit values of characters for some particular irreducible unitary non-highest weight modules of $\U(n, n+1)$ starting from a representation of $\U(1)$. Our method is as follows. Let $(\G, \G') = (\U(1), \U(p, q)), p,q \geq 1$,  be a dual pair in $\Sp(2(p+q), \mathbb{R})$, $\widetilde{\Sp}(2(p+q), \mathbb{R})$ be the corresponding metaplectic group (see Equation \eqref{MetaplecticGroup}), $\omega^{1}_{p, q}$ be the metaplectic representation of $\widetilde{\Sp}(2(p+q), \mathbb{R})$ (see Theorem \ref{MetaplecticRepresentation}), $\widetilde{\G}$ and $\widetilde{\G'}$ be the preimages of $\G$ and $\G'$ in $\widetilde{\Sp}(2(p+q), \mathbb{R})$ and $\Pi$ be a representation of $\U(1)$. We denote by $\theta^{1}_{p, q}$ the map coming from Howe's duality theorem (see Equation \eqref{Maptheta})
\begin{equation*}
\theta^{1}_{p, q}: \mathscr{R}(\widetilde{\U}(1), \omega^{1}_{p, q}) \to \mathscr{R}(\widetilde{\U}(p, q), \omega^{1}_{p, q})\,,
\end{equation*} 
where $\mathscr{R}(\widetilde{\U}(1), \omega^{1}_{p, q})$ and $\mathscr{R}(\widetilde{\U}(p, q), \omega^{1}_{p, q})$ are defined in Notation \ref{NotationsRGOmega}. By assumption on $p$ and $q$, $\Pi' :=\theta^{1}_{p, q}(\Pi)$ is a non-zero irreducible unitary highest weight module of $\widetilde{\U}(p, q)$. In Appendix \ref{ComputationsU(1)}, we computed the value of the character $\Theta_{\Pi'}$ of $\Pi'$ on every Cartan subgroup of $\widetilde{\U}(p, q)$. A similar result was obtained in \cite{MER} for $p = q = 1$.

\noindent We now consider the dual pair $(\G', \G_{n}) = (\U(p, q), \U(n, n+1))$ in $\Sp(2(p+q)(2n+1), \mathbb{R})$. As before, we denote by $\widetilde{\Sp}(2(p+q)(2n+1), \mathbb{R})$ the metaplectic group of $\Sp(2(p+q)(2n+1), \mathbb{R})$, $\omega^{p, q}_{n, n+1}$ the metaplectic representation of $\widetilde{\Sp}(2(p+q)(2n+1), \mathbb{R})$ and 
\begin{equation*}
\theta^{p, q}_{n, n+1}: \mathscr{R}(\widetilde{\U}(p, q), \omega^{p, q}_{n, n+1}) \to \mathscr{R}(\widetilde{\U}(n, n+1), \omega^{p, q}_{n, n+1})
\end{equation*} 
the map obtained from Howe's correspondence (see Equation \eqref{Maptheta}). Using a result of Kudla (see \cite{KUD}), it follows that $\Pi^{n} := \theta^{p, q}_{n, n+1}(\Pi') \neq 0$ for every $n \geq 2$, i.e. $\Pi^{n} \in \mathscr{R}(\widetilde{\U}(n, n+1), \omega^{p, q}_{n, n+1})$. Note that by using \cite{LOKE}, it follows that $\Pi^{n}_{1} = \Pi^{n}$, where $\Pi^{n}_{1}$ is usually called the "big theta" and is defined in Section \ref{SectionHoweDuality}.

\noindent As explained in \cite{TOM5} (see also Remark \ref{RemarkConjecture}), by using that $\Pi'$ is unitary, we get that if $p+q \leq n$, the distribution character $\Theta_{\Pi^{n}}$ of $\Pi^{n}$ can be obtained by using the Cauchy--Harish-Chandra integral (see Section \ref{SectionCauchyHarishChandra}). More precisely, because $\Pi^{n}_{1} = \Pi^{n}$, we get from Equation \eqref{ChcStar}, Theorem \ref{TheoremTheta} and Equation \eqref{ChcNonCompact} that:
\begin{equation*}
\Theta_{\Pi^{n}}(\Psi) = \sum\limits_{i=1}^{\min(p, q)+1} \displaystyle\int_{\widetilde{\H}'_{i}} \overline{\Theta_{\Pi'}(\tilde{h}'_{i})} \left|\det(\Id - \Ad(\tilde{h}'_{i})^{-1})_{\mathfrak{g}'/\mathfrak{h}'_{i}}\right|^{\frac{1}{2}} \Chc_{\tilde{h}'_{i}}(\Psi) d\tilde{h}'_{i}\,, \qquad \left(\Psi \in \mathscr{C}^{\infty}_{c}(\widetilde{\G_{n}})\right),
\end{equation*}
where $\H'_{1}, \ldots, \H'_{\min(p, q)+1}$ is a maximal set of non-conjugate Cartan subgroups of $\G'$ and $\Chc_{\tilde{h}'_{i}}$ is a family of distributions on $\widetilde{\G_{n}}$ parametrized by regular elements on the different Cartan subgroups of $\widetilde{\G'}$ as recalled in Section \ref{SectionCauchyHarishChandra}.

\noindent In this paper, we compute explicitly the value of $\Theta_{\Pi^{n}}$ on every Cartan subgroup of $\widetilde{\G_{n}}$ for $p = q = 1$. We keep the notations of Appendix \ref{AppendixCartanUnitary} and parametrize the $n+1$ Cartan subgroups of $\G_{n}$ by subsets $\S_{0} = \left\{\emptyset\right\}, \S_{1}, \ldots, \S_{n}$ of strongly orthogonal imaginary roots of $\mathfrak{g}_{n}$ (see also \cite[Section~2]{SCH}) and let $\H_{n}(\S_{0}), \ldots, \H_{n}(\S_{n})$ be the corresponding Cartan subgroups of $\G_{n}$ and $\H_{n, \S_{0}}, \ldots, \H_{n, \S_{n}}$ be the diagonal subgroups of $\GL(2n+1, \mathbb{C})$ given, for $0 \leq t \leq n$, by $\H_{n, \S_{t}} = c(\S_{t})^{-1}\H_{n}(\S_{t})c(\S_{t})$, where $c(\S_{t})$ is the Cayley transform defined in Equation \eqref{CayleyCS}.

\noindent In Theorem \ref{IntegralFormulaCharacter}, we explain how to go from the distribution character $\Theta_{\Pi^{n}}$ to the locally integrable function $\Theta_{\Pi^{n}}$ by using results of Bernon and Przebinda from \cite{TOM4} and \cite{TOM2} (see also Section \ref{SectionUnitary}). In Theorem \ref{PropositionFinalFormulaThetaPin}, we computed the value of $\Theta_{\Pi^{n}}$ of the different Cartan subgroups of $\G_{n}$ and get for every $t \in [|0, n|]$,
\begin{equation*}
\Theta_{\Pi^{n}}(c(\S_{t})\check{p}(\check{h})c(\S_{t})^{-1}) = \pm\C \begin{cases}
-\widetilde{\A} \sum\limits_{\underset{i \in J(h) \cup \B_{t}}{j \in K(h) \cup \A_{t}}} h^{n}_{i}h^{n+m}_{j} \Omega_{i, j}(h) + \delta_{t, 0}\B e^{-(m+1)\sgn(X_{2n+1})X_{2n+1}}\Sigma(h) & \text{ if } m \geq 1 \\ \widetilde{\A} \sum\limits_{\underset{i \neq j}{i, j \in J(h) \cup \B_{t}}} h^{n}_{i}h^{n}_{j} \Omega_{i, j}(h) + \delta_{t, 0}\B e^{-(m+1)\sgn(X_{2n+1})X_{2n+1}}\Sigma(h) & \text{ if } m = 0 \\ \widetilde{\A}\sum\limits_{\underset{j \in J(h) \cup \B_{t}}{i \in K(h) \cup \A_{t}}} h^{n+m}_{i}h^{n}_{j} \Omega_{i, j}(h) + \delta_{t, 0}\B e^{(m-1)\sgn(X_{2n+1})X_{2n+1}}\Sigma(h)& \text{ if } m \leq -1
\end{cases}
\end{equation*}
where $\C$ is a constant, $m$ is the highest weight of $\Pi$ (see Notation \ref{NotationA1}), $\check{\H}_{n, \S_{t}}$ is a double cover of $\H_{n, \S_{t}}$ defined in Equation \eqref{DoubleCoverHCRho}, $\check{p}: \check{\H}_{n, \S_{t}} \to \widetilde{\H}_{n, \S_{t}}$ is defined in Section \ref{SectionUnitary}, $h$ is the element of $\H_{n, \S_{t}}$ given, as in Equation \eqref{EquationH(S)}, by
\begin{equation*}
h = (h_{1}, \ldots, h_{2n+1}) =  \diag(e^{iX_{1} - X_{2n+1}}, \ldots, e^{iX_{t} - X_{2n+2-t}}, e^{iX_{t+1}}, \ldots, e^{iX_{2n+1-t}}, e^{iX_{t} + X_{2n+2-t}}, \ldots, e^{iX_{1} + X_{2n+1}})\,, \qquad X_{j} \in \mathbb{R}\,,
\end{equation*}
where $\Omega_{i, j}, 1 \leq i \neq j \leq 2n+1$, and $\Sigma$ are the functions on $\H^{\reg}_{n, \S_{t}}$ are given by
\begin{equation*}
\Omega_{i, j}(h) = \cfrac{\prod\limits_{\underset{k \neq i, j}{k=1}}^{2n+1} h_{k}}{\prod\limits_{\underset{k \neq i}{k=1}}^{2n+1} (h_{i} - h_{k}) \prod\limits_{\underset{l \neq i, j}{l=1}}^{2n+1} (h_{j} - h_{l})}\,, \qquad \Sigma(h) = \cfrac{\sgn(X_{2n+1}) e^{imX_{1}}\left|e^{(2n-2)X_{2n+1}}\right|\left(1 - e^{-2X_{2n+1}}\right)}{\left|\prod\limits_{k=2}^{2n} \left(1 - h_{1}h^{-1}_{k}\right) \prod\limits_{k=2}^{2n} \left(1- h_{k}h^{-1}_{2n+1}\right)\right|\left|1 - e^{-2X_{2n+1}}\right|^{2}}\,,
\end{equation*}
$K(h)$ and $J(h)$ are the subsets of $\{1, \ldots, t\}$ defined by
\begin{equation*}
J(h) = \{j \in \{1, \ldots, t\}, \sgn(X_{2n+2-j}) = 1\}\,, \qquad K(h) = \{j \in \{1, \ldots, t\}, \sgn(X_{2n+2-j}) = -1\}\,,
\end{equation*}
$\A_{t}$ and $\B_{t}$ are the subsets of $\left\{1, \ldots, 2n+1\right\}$ given by 
\begin{equation*}
\A_{t} = \left\{t+1, \ldots, n\right\}, \qquad \B_{t} = \left\{n+1, \ldots, 2n+1-t\right\}\,,
\end{equation*}
$\sgn$ is the sign-function on $\mathbb{R}^{*}$ given by
\begin{equation*}
\sgn(X) = \begin{cases} 1 & \text{ if } X > 0 \\ -1 & \text{ if } X < 0 \end{cases}, \qquad (X \in \mathbb{R}^{*})\,,
\end{equation*}
and $\widetilde{\A}$ and $\B$ are constants defined in Theorems \ref{IntegralFormulaCharacter} and \ref{PropositionFinalFormulaThetaPin}. As explained in Lemma \ref{Lemma1502}, the denominator
\begin{equation*}
\prod\limits_{k=2}^{2n} \left(1 - h_{1}h^{-1}_{k}\right) \prod\limits_{k=2}^{2n} \left(1- h_{k}h^{-1}_{2n+1}\right)
\end{equation*}
is real and its sign is constant on every Weyl chamber.

\bigskip

\noindent Note that the representation $\Pi^{n}$ is irreducible and unitary but not an highest weight module. In particular, its character $\Theta_{\Pi^{n}}$ cannot be obtained by using Enright's formula (see \cite[Corollary~2.3]{ENR}) or \cite{MER3}. 

\noindent Our formula for the character is Weyl denominator free.The method we used in this paper gives a general procedure to give characters of non-highest weight representations by starting from an highest weight representation of a compact group. In particular, proving Conjecture \ref{ConjectureHC} could make our method more general, by removing the assumption of stable range for the second lifting.

\bigskip

\noindent The paper is organised as follows. In Section \ref{SectionMetaplectic}, we recalled a construction of the metaplectic representation given by Aubert and Przebinda in \cite{TOM6}. The goal is to define the embedding $\T$ of the metaplectic group into the set of tempered distributions on the symplectic space $\W$ (see Equation \eqref{MapTEmbedding}) which is crucial in the construction of the Cauchy--Harish-Chandra integral. After recalling Harish-Chandra's character theory and Howe's correspondence in Section \ref{SectionHoweDuality}, we define in Section \ref{SectionCauchyHarishChandra} the Cauchy--Harish-Chandra integral and explain a conjecture of Przebinda on the transfer of characters in the theta correspondence (see Conjecture \ref{ConjectureHC}). In Section \ref{SectionUnitary}, we summarized the results of \cite{TOM4} and \cite{TOM2} on how to compute the Cauchy--Harish-Chandra integral on the different Cartan subgroups, and we adapt the results to unitary groups, which are the one we consider in this paper, and make the computations for $\Theta_{\Pi^{n}}$ in Section \ref{SectionUnitary2} (see Theorem \ref{IntegralFormulaCharacter} and Proposition \ref{PropositionFinalFormulaThetaPin}). The document contains three appendices: in Appendix \ref{ComputationsU(1)}, we make computations for $\Pi'$ for the dual pair $(\G, \G') = (\U(1), \U(p, q))$ on every Cartan subgroup of $\G'$, in Appendix \ref{AppendixCartanUnitary}, we recall how to parametrize the Cartan subgroups of $\U(p, q)$ by using strongly orthogonal roots (see also \cite[Section~2]{SCH}) and in Appendix \ref{AppendixEpsilon}, we define a character $\varepsilon$ appearing in the formulas for the Cauchy--Harish-Chandra integrals and proved a lemma for $\varepsilon$ useful in the proof of Lemma \ref{Lemma0702}.

\bigskip

\noindent \textbf{Acknowledgements: } I would like to thank Tomasz Przebinda for the useful discussions during the preparation of this paper. This research was supported by the MOE-NUS AcRF Tier 1 grants R-146-000-261-114 and R-146-000-302-114.

\section{Metaplectic Representation}

\label{SectionMetaplectic}

Let $\chi$ be the character of $\mathbb{R}$ given by $\chi(t) = e^{2i\pi t}$ and let $\W$ be a finite dimensional vector over $\mathbb{R}$ endowed with a non-degenerate, skew-symmetric, bilinear form $\langle\cdot, \cdot\rangle$.  We denote by $\Sp(\W)$ the corresponding group of isometries, i.e.
\begin{equation*}
\Sp(\W) = \left\{g \in \GL(\W), \langle g(w_{1}), g(w_{2})\rangle = \langle w_{1}, w_{2} \rangle, (\forall w_{1}, w_{2} \in \W)\right\}\,,
\end{equation*}
and by $\mathfrak{sp}(\W)$ its Lie algebra given by:
\begin{equation*}
\mathfrak{sp}(\W) = \left\{X \in \End(\W), \langle X(w_{1}), w_{2}\rangle + \langle w_{1}, X(w_{2})\rangle = 0, (\forall w_{1}, w_{2} \in \W)\right\}\,.
\end{equation*}
We first start by recalling the construction of the metaplectic group $\widetilde{\Sp}(\W)$: it is a connected two-fold cover of $\Sp(\W)$. We use the formalism of \cite{TOM6}. Let $\J$ be a compatible positive complex structure on $\W$, i.e. an element of the Lie algebra $\mathfrak{sp}(\W)$ satisfying $\J^{2} = -\Id_{\W}$ and such that the symmetric form $\langle \J\cdot, \cdot\rangle$ is positive definite. For every element $g \in \Sp(W)$, we denote by $\J_{g}$ the element of $\End(\W)$ given by $\J_{g} = \J^{-1}(g-1)$. One can easily check that the restriction of $\J_{g}$ to $\J_{g}(\W)$ is invertible and let $\widetilde{\Sp}(\W)$ be the subset of $\Sp(\W) \times \mathbb{C}^{\times}$ defined by
\begin{equation}
\widetilde{\Sp}(\W) = \left\{\tilde{g} = (g, \xi) \in \Sp(\W) \times \mathbb{C}^{\times}, \xi^{2} = i^{\dim_{\mathbb{R}}(\W)}\det(\J_{g})^{-1}_{\J_{g}(\W)}\right\}\,,
\label{MetaplecticGroup}
\end{equation}
where $\det(\J_{g})^{-1}_{\J_{g}(\W)}$ denotes the determinant of the endomorphism $\J_{g}$ restricted to $\J_{g}(\W)$. On $\widetilde{\Sp}(\W)$, we define a multiplication by;
\begin{equation*}
(g_{1}, \xi_{1})(g_{2}, \xi_{2}) = (g_{1}g_{2}, \xi_{1}\xi_{2}\C(g_{1}, g_{2})) \qquad \left(g_{1}, g_{2} \in \Sp(\W), \xi_{1}, \xi_{2} \in \mathbb{C}^{\times}\right)\,,
\end{equation*}
where $\C: \Sp(\W) \times \Sp(\W) \to \mathbb{C}^{\times}$ is a cocycle defined in \cite[Proposition~4.13]{TOM6}. Let $\Theta$ be the map defined by:
\begin{equation*}
\Theta: \widetilde{\Sp}(\W) \ni \tilde{g} = (g, \xi) \to \xi \in \mathbb{C}^{\times}\,.
\end{equation*}
One can check easily that $\widetilde{\Sp}(\W)$ is a connected two-fold cover of $\Sp(\W)$, where the covering map $\pi: \widetilde{\Sp}(\W) \to \Sp(\W)$ is given by $\pi((g, \xi)) = g$.

\noindent For every $g \in \End(\W)$, we denote by $c(g)$ the Cayley transform of $g$ defined by:
\begin{equation*}
c(g): (g-1)\W \ni (g-1)w \to (g+1)w + \Ker(g-1) \in \W/\Ker(g-1)\,.
\end{equation*}

\noindent We denote by $\S(\W)$ the Schwartz space of $\W$ and by $\S^{*}(\W)$ the corresponding space of tempered distributions. We define the map $t: \widetilde{\Sp}(\W) \to \S^{*}(\W)$ by $t(g) = \chi_{c(g)}\mu_{(g-1)\W}$, where $\chi_{c(g)}$ is the function on $(g-1)\W$ given by
\begin{equation*}
\chi_{c(g)}(w): (g-1)\W \to \chi\left(\frac{1}{4}\langle c(g)w, w\rangle\right)\,, \qquad \left(w \in (g-1)\W\right)\,,
\end{equation*}
and $\mu_{(g-1)\W}$ is the Lebesgue measure on $(g-1)\W$ such that the volume of the unit cube with respect to the bilinear form $\langle\J\cdot, \cdot\rangle$ is $1$. More precisely,
\begin{equation*}
t(g)\phi = \displaystyle\int_{(g-1)\W} \chi_{c(g)}(w) \phi(w) d\mu_{(g-1)\W}(w)\,, \qquad \left(\phi \in \S(\W)\right)\,.
\end{equation*}
We define the map $\T: \widetilde{\Sp}(\W) \to \S^{*}(\W)$ given by 
\begin{equation}
\T(\tilde{g}) = \Theta(\tilde{g}) t(g) \qquad \left(\tilde{g} \in \widetilde{\Sp}(\W), g = \pi(\tilde{g})\right)\,.
\label{MapTEmbedding}
\end{equation}

\begin{rema}

Let $\tilde{g}_{1}, \tilde{g}_{2} \in \widetilde{\Sp}(\W)$. The question of the relation between the distributions $\T(\tilde{g}_{1}), \T(\tilde{g}_{2})$ and $\T(\tilde{g}_{1}\tilde{g}_{2})$ arises naturally. In order to explain this link, we need to recall the notion of twisted convolution. 

\noindent For two functions $\phi_{1}, \phi_{2} \in \S(\W)$, we define $\phi_{1} \natural \phi_{2}$ the function on $\W$ given by
\begin{equation*}
\phi_{1} \natural \phi_{2}(w) = \displaystyle\int_{\W} \phi_{1}(u) \phi_{2}(w-u)\chi\left(\frac{1}{2}\langle u, w\rangle\right) d\mu_{\W}(u)\,, \qquad \left(w \in \W\right)\,.
\end{equation*}
One can easily check that $\phi_{1} \natural \phi_{2} \in \S(\W)$. We extend $\natural$ to some tempered distributions on $\W$. In fact, for every $g \in \Sp(W)$, the twisted convolution
\begin{equation*}
t(g) \natural \phi(w) = \displaystyle\int_{(g-1)\W} \chi_{c(g)}(u)\phi(w-u)\chi\left(\frac{1}{2}\langle u, w\rangle\right) d\mu_{\W}(u)\,, \qquad \left(w \in \W, \phi \in \S(\W)\right)\,,
\end{equation*}
is still a Schwartz function and the map:
\begin{equation*}
\S(\W) \ni \phi \to t(g) \natural \phi \in \S(\W)
\end{equation*}
is well-defined and continuous (see \cite[Proposition~4.11]{TOM6}). Similarly, $\T(\tilde{g}) \natural \phi \in \S(\W)$ for every $\tilde{g} \in \widetilde{\Sp}(\W)$ and $\phi \in \S(\W)$. In particular, it makes sense to consider $\T(\tilde{g}_{1}) \natural \left(\T(\tilde{g}_{2}) \natural \phi\right)$ for every $\tilde{g}_{1}, \tilde{g}_{2} \in \widetilde{\Sp}(\W)$ and $\phi \in \S(\W)$ and one can prove that $\T(\tilde{g}_{1}) \natural (\T(\tilde{g}_{2}) \natural \phi) = \T(\tilde{g}_{1}\tilde{g}_{2}) \natural \phi$.

\label{RemarkTwisted}

\end{rema}

\noindent Let $\W = \X \oplus \Y$ be a complete polarization of the space $\W$ and we denote by $dx$, $dy$ the Lebesgue measures on $\X$ ans $\Y$ respectively such that $d\mu_{\W} = dxdy$. Using the Weyl transform $\mathscr{K}$, we have a natural isomorphism between the spaces $\S(\W)$ and $\S(\X \times \X)$ given by
\begin{equation*}
\mathscr{K}: \S(\W) \ni \phi \to \mathscr{K}(\phi)(x_{1}, x_{2}) = \displaystyle\int_{\Y} \phi(x_{1} - x_{2} + y) \chi\left(\frac{1}{2}\langle y, x_{1} + x_{2}\rangle\right) dy \in \S(\X \times \X)\,,
\end{equation*}
which extends to an isomorphism on the corresponding spaces of distributions. Similarly, every tempered distribution on $\X \times \X$ can be identified to an element of $\Hom(\S(\X), \S^{*}(\X))$ using the Schwartz Kernel Theorem (see \cite[Equation~146]{TOM6}). The corresponding isomorphism will be denoted by $\Op$ and let $\omega: \widetilde{\Sp}(\W) \to \Hom(\S(\X), \S^{*}(\X))$ be the map given by
\begin{equation*}
\omega = \Op \circ \mathscr{K} \circ \T\,.
\end{equation*}
As proved in \cite[Section~4]{TOM6}, we get that for every $\tilde{g} \in \widetilde{\Sp}(\W)$ and $v \in \S(\X)$, $\omega(\tilde{g})v \in \S(\X)$ and that $\omega(\tilde{g}\tilde{h}) = \omega(\tilde{g}) \circ \omega(\tilde{h})$ for every $\tilde{g}, \tilde{h} \in \widetilde{\Sp}(\W)$. The operator $\omega(\tilde{g}) \in \Hom(\S(\X), \S(\X))$ can be extended to $\L^{2}(\X)$ by
\begin{equation*}
\omega(\tilde{g})\phi = \lim\limits_{\underset{v \in \S(\X)}{||\phi - v||_{2} \to 0}} \omega(\tilde{g})v\,, \qquad \left(\phi \in \L^{2}(\X)\right)\,.
\end{equation*}

\begin{theo}

For every $\tilde{g} \in \widetilde{\S}(\W)$ and $\phi \in \L^{2}(\X)$, the map
\begin{equation*}
\widetilde{\Sp}(\W) \ni \tilde{g} \to \omega(\tilde{g})\phi \in \L^{2}(\X)\,,
\end{equation*}
is well-defined and continuous. Moreover, $\omega(\tilde{g}) \in \U(\L^{2}(\X))$, i.e. $\omega$ is a faithful unitary representation of $\widetilde{\Sp}(\W)$, and for every $\Psi \in \mathscr{C}^{\infty}_{c}(\widetilde{\Sp}(\W))$, we get:
\begin{equation*}
\displaystyle\int_{\widetilde{\Sp}(\W)} \Theta(\tilde{g})\Psi(\tilde{g})d\tilde{g} = \tr \displaystyle\int_{\widetilde{\Sp}(\W)} \Psi(\tilde{g})\omega(\tilde{g}) d\tilde{g}\,,
\end{equation*}
where $d\tilde{g}$ is a Haar measure on $\widetilde{\Sp}(\W)$.
\label{MetaplecticRepresentation}

\end{theo}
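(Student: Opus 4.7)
The plan is to deduce all assertions from the formula $\omega = \Op \circ \mathscr{K} \circ \T$ and the framework developed in \cite{TOM6}, building on the Schwartz-level statements already recorded before the theorem: that $\omega(\tilde{g})$ preserves $\S(\X)$ and satisfies $\omega(\tilde{g}\tilde{h}) = \omega(\tilde{g}) \omega(\tilde{h})$. What remains is (i) to prove isometry so that each $\omega(\tilde{g})$ extends to a unitary operator on $\L^2(\X)$, (ii) to establish strong continuity of $\tilde{g} \mapsto \omega(\tilde{g})\phi$, (iii) to prove faithfulness, and (iv) to establish the trace identity.

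For (i), I would first verify $\|\omega(\tilde{g})v\|_2 = \|v\|_2$ for $v \in \S(\X)$. Unfolding the definition and applying the Plancherel formula for the Weyl transform $\mathscr{K}$ reduces this to an integral whose squared modulus is proportional to $|\Theta(\tilde{g})|^2 = |\xi|^2$ times the Jacobian of the change of variables from a fundamental domain in $\W$ to $(g-1)\W$. The defining equation $\xi^2 = i^{\dim_{\mathbb{R}}(\W)}\det(\J_g)^{-1}_{\J_g(\W)}$ of \eqref{MetaplecticGroup} is precisely calibrated so that $|\xi|^2$ cancels this Jacobian; this is the point at which the two-fold cover of $\Sp(\W)$ becomes indispensable. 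Since the representation property at the Schwartz level forces $\omega(\tilde{g}^{-1})$ to be the inverse of the resulting isometry, $\omega(\tilde{g})$ extends to a unitary operator on $\L^2(\X)$.

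For (ii), continuity of $\tilde{g} \mapsto \omega(\tilde{g})v$ for $v \in \S(\X)$ follows from the smooth dependence of $\T(\tilde{g})$ on $\tilde{g}$, with the Cayley transform formalism of \cite{TOM6} providing the necessary control across the stratification of $\Sp(\W)$ by the rank of $g-1$; extension to $\phi \in \L^2(\X)$ is then routine by an $\varepsilon/3$ argument using the uniform operator-norm bound from (i). For (iii), if $\omega(\tilde{g}) = \Id$ then $\mathscr{K}\T(\tilde{g})$ equals the identity kernel $\delta(x-y)$; pulling this back through $\mathscr{K}^{-1}$ identifies $\T(\tilde{g})$ with the Dirac distribution at the origin of $\W$, and since $\T(\tilde{g})$ is supported on $(g-1)\W$ with density $\Theta(\tilde{g})\chi_{c(g)}$, this forces $(g-1)\W = \{0\}$ and $\xi = 1$, i.e.\ $\tilde{g}$ is the identity of $\widetilde{\Sp}(\W)$.

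For (iv), write $\tilde{\omega}(\Psi) = \int_{\widetilde{\Sp}(\W)} \Psi(\tilde{g})\omega(\tilde{g})\,d\tilde{g} = \Op \circ \mathscr{K}(K_\Psi)$ with $K_\Psi = \int_{\widetilde{\Sp}(\W)}\Psi(\tilde{g})\T(\tilde{g})\,d\tilde{g}$. The $\Psi$-smoothing promotes the distributional family $\{\T(\tilde{g})\}$ to a Schwartz function on $\W$ (using the uniform estimates of \cite{TOM6} on the compact support of $\Psi$), so $\tilde{\omega}(\Psi)$ is trace class and its trace equals $\int_\X \mathscr{K}(K_\Psi)(x,x)\,dx$. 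Evaluating $\mathscr{K}$ on the diagonal yields $\int_\Y K_\Psi(y)\chi(\langle y, x\rangle)\,dy$, and Fourier inversion after integration in $x$ extracts the value $K_\Psi(0)$; substituting $\T(\tilde{g}) = \Theta(\tilde{g})\chi_{c(g)}\mu_{(g-1)\W}$ and tracking the Lebesgue normalizations prescribed in the definition of $t(g)$ yields exactly $\int \Psi(\tilde{g})\Theta(\tilde{g})\,d\tilde{g}$. The main obstacle throughout is the rigorous justification of these manipulations uniformly across the rank stratification of $g - 1$ and the careful bookkeeping of measure normalizations; this is precisely what the Cayley transform framework of \cite{TOM6} is designed to handle.
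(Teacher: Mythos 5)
The paper does not actually prove this theorem: it is a summary of the construction carried out in \cite{TOM6}, and the preceding remarks in Section~\ref{SectionMetaplectic} only quote the facts that $\omega(\tilde{g})$ preserves $\S(\X)$, the group law, and the twisted-convolution identity $\T(\tilde{g}_1)\natural(\T(\tilde{g}_2)\natural\phi)=\T(\tilde{g}_1\tilde{g}_2)\natural\phi$ of Remark~\ref{RemarkTwisted}. So there is no internal proof to compare against, and your proposal has to stand on its own.

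Your step (i) contains a genuine gap. The Plancherel theorem for $\mathscr{K}$ says $\|\mathscr{K}\phi\|_{\L^{2}(\X\times\X)}$ is proportional to $\|\phi\|_{\L^{2}(\W)}$; combined with the fact that $\Op$ maps $\L^{2}(\X\times\X)$ isometrically onto the Hilbert--Schmidt ideal, all this tells you is that $\Op\circ\mathscr{K}$ identifies $\L^{2}(\W)$ with $\mathrm{HS}(\L^{2}(\X))$. That is information about Hilbert--Schmidt norms of kernel operators, and an individual $\omega(\tilde{g})$ is \emph{not} Hilbert--Schmidt, so Plancherel says nothing about whether it is an isometry of $\L^{2}(\X)$. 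Your remark about $|\Theta(\tilde{g})|^{2}$ cancelling a Jacobian is the right normalization ingredient, but the mechanism through which it enters is not a Plancherel computation. The standard route is: show that the involution on $\S^{*}(\W)$ corresponding (via $\Op\circ\mathscr{K}$) to operator adjoint sends $\T(\tilde{g})$ to $\T(\tilde{g}^{-1})$; this is a direct check from the explicit form $\T(\tilde{g})=\Theta(\tilde{g})\chi_{c(g)}\mu_{(g-1)\W}$, using $(g^{-1}-1)\W=(g-1)\W$, $c(g^{-1})=-c(g)$, and the behaviour of $\Theta$ under inversion and complex conjugation, and it is exactly here that the calibration $\xi^{2}=i^{\dim_{\mathbb{R}}\W}\det(\J_g)^{-1}_{\J_g(\W)}$ of \eqref{MetaplecticGroup} is used. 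Once $\omega(\tilde{g})^{*}=\omega(\tilde{g}^{-1})$ on $\S(\X)$, the group law from Remark~\ref{RemarkTwisted} gives $\omega(\tilde{g})^{*}\omega(\tilde{g})=\omega(\tilde{e})=\Id$, hence isometry and unitary extension. Your appeal to Plancherel cannot be repaired into this without effectively rewriting it as the adjoint argument.

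Two smaller points. In (ii), $\T(\tilde{g})$ does \emph{not} depend continuously on $\tilde{g}$ in any naive sense, because the support $(g-1)\W$ jumps in dimension; strong continuity of $\tilde{g}\mapsto\omega(\tilde{g})v$ on $\S(\X)$ has to be argued across the strata, e.g.\ by reducing to a neighbourhood of a fixed $\tilde{g}_0$, translating by $\tilde{g}_0^{-1}$, and using the Cayley chart near the identity where everything is smooth -- you gesture at this but do not supply the actual mechanism. In (iii), note the identity element of $\widetilde{\Sp}(\W)$ is $(1,\xi_e)$ with $\xi_e^{2}=i^{\dim_{\mathbb{R}}\W}$, so $\xi_e$ need not equal $1$; the correct statement is that the two preimages of $1\in\Sp(\W)$ give $\pm\xi_e\,\delta_0$, and only the group identity gives $\omega=\Id$, which is what faithfulness requires. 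Your step (iv), by contrast, matches exactly the computation already recorded in Remark~\ref{RemarkCayleyCovering}(2): $K_\Psi=\mathscr{F}(\Phi_\Psi)\circ\tau_{\mathfrak{sp}(\W)}$ is Schwartz, and evaluating $\tr\tilde{\omega}(\Psi)=\int_\X\mathscr{K}(K_\Psi)(x,x)\,dx$ by Fourier inversion yields $K_\Psi(0)=\int\Psi(\tilde{g})\Theta(\tilde{g})\,d\tilde{g}$ -- that part is sound.
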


\begin{rema}

\begin{enumerate}
\item Let $\Sp^{c}(\W)$ be the subset of $\Sp(\W)$ given by $\Sp^{c}(\W) = \left\{g \in \Sp(\W), \det(g-1) \neq 0\right\}$. This is the domain of the Cayley transform. We will denote by $\widetilde{\Sp}^{c}(\W)$ the preimage of $\Sp^{c}(\W)$ in $\widetilde{\Sp}(\W)$. 

\noindent For every $g \in \Sp^{c}(\W)$, $c(g) \in \mathfrak{sp}(\W)$. We denote by $\mathfrak{sp}^{c}(\W)$ the subspace of $\mathfrak{sp}(\W)$ defined by $c(\Sp^{c}(\W))$. Obviously, $c^{2}(g) = g$. It defines a bijective map $c: \mathfrak{sp}^{c}(\W) \to \Sp^{c}(\W)$. Fix an element $\widetilde{-1}$ in $\pi^{-1}(\{-1\})$. In particular, there exists a unique map $\tilde{c}: \mathfrak{sp}^{c}(\W) \to \widetilde{\Sp}^{c}(\W)$ such that $c = \pi \circ \tilde{c}$ and $\tilde{c}(0) = \widetilde{-1}$.

\noindent Moreover, for every $\Psi \in \mathscr{\Sp}(\W)$ whose support is included in $\widetilde{\Sp}^{c}(\W)$, we get:
\begin{equation*}
\displaystyle\int_{\widetilde{\Sp}(\W)} \Psi(\tilde{g}) d\tilde{g} = \displaystyle\int_{\mathfrak{sp}(\W)} \Psi(\tilde{c}(X)) j_{\mathfrak{sp}(\W)}(X) dX\,,
\end{equation*}
where $j_{\mathfrak{sp}(\W)}(X) = |\det(1-X)|^{r}$, where $r = \frac{2\dim_{\mathbb{R}}(\mathfrak{sp}(\W))}{\dim_{\mathbb{R}}(\W)}$ (see \cite[Section~3]{TOM7}).
\item For every $\Psi \in \mathscr{C}^{\infty}_{c}(\widetilde{\Sp}(\W))$, we can consider the following distribution on $\W$
\begin{equation*}
\displaystyle\int_{\widetilde{\Sp}(\W)} \Psi(\tilde{g}) \T(\tilde{g}) d\tilde{g}\,.
\end{equation*}
This distribution is in fact given by a Schwartz function. Indeed, let's first assume that the support of $\Psi$ in included in $\widetilde{\Sp}^{c}(\W)$. For every $\phi \in \S(\W)$, we get:
\begin{eqnarray*}
& & \left(\displaystyle\int_{\widetilde{\Sp}(W)} \Psi(\tilde{g}) \T(\tilde{g}) d\tilde{g}\right)(\phi) = \displaystyle\int_{\widetilde{\Sp}(W)} \Psi(\tilde{g}) \T(\tilde{g})\phi d\tilde{g} \\
& = & \displaystyle\int_{\widetilde{\Sp}(\W)} \Psi(\tilde{g}) \displaystyle\int_{\W} \Theta(\tilde{g}) \chi_{c(g)}(w) \phi(w) dw d\tilde{g} = \displaystyle\int_{\mathfrak{sp}(\W)} \displaystyle\int_{\W} \Psi(\tilde{c}(X)) \Theta(\tilde{c}(X)) \chi_{X}(w) j_{\mathfrak{sp}(\W)}(X)\phi(w) dw dX \\
& = & \displaystyle\int_{\W} \left(\displaystyle\int_{\mathfrak{sp}(\W)} \Phi_{\Psi}(X) \chi(\tau_{\mathfrak{sp}(\W)}(w)(X)) dX\right)\phi(w) dw = \displaystyle\int_{\W} \mathscr{F}(\Phi_{\Psi}) \circ \tau_{\mathfrak{sp}(\W)}(w) \phi(w) dw\,,
\end{eqnarray*}
where $\Phi_{\Psi}(X) = \Psi(\tilde{c}(X)) \Theta(\tilde{c}(X)) j_{\mathfrak{sp}(\W)}(X), X \in \mathfrak{sp}(\W)$, is smooth and compactly supported function on $\mathfrak{sp}(\W)$ such that $\supp(\Phi_{\Psi}) \subseteq \mathfrak{sp}^{c}(\W)$, $\mathscr{F}(\Phi_{\Psi})$ is the Fourier transform of $\Phi_{\Psi}$ and $\tau_{\mathfrak{sp}(\W)}: \W \to \mathfrak{sp}(\W)^{*}$ is the moment map defined by $\tau_{\mathfrak{sp}(\W)}(w)(X) = \langle X(w), w\rangle, w \in \W, X \in \mathfrak{sp}(\W)$. In particular, $\mathscr{F}(\Phi_{\Psi}) \circ \tau_{\mathfrak{sp}(\W)}$ is a Schwarz function on $W$.

\noindent We can remove the assumption on the support of $\Psi$ by using the previous result. Indeed, the Zariski topology on $\Sp(W)$ is noetherian. In particular, there exists $g_{1}, \ldots, g_{m} \in \Sp(W)^{c}$ such that
\begin{equation*}
\widetilde{\Sp}(W) = \bigoplus\limits_{i=1}^{m} \tilde{g_{i}} \widetilde{\Sp}^{c}(W)\,.
\end{equation*}
We can find functions $\Psi_{1}, \ldots, \Psi_{m} \in \mathscr{C}^{\infty}_{c}(\widetilde{\Sp}^{c}(W))$ such that for every $\tilde{g} \in \widetilde{\Sp}(W)$,
\begin{equation*}
1 = \sum\limits_{i=1}^{m} \Psi_{i}(\tilde{g_{i}}^{-1} \tilde{g})\,.
\end{equation*}
Then, for every $\Psi \in \mathscr{C}^{\infty}_{c}(\widetilde{\Sp}(W))$, we get:
\begin{eqnarray*}
& & \displaystyle\int_{\widetilde{\Sp}(W)} \Psi(\tilde{g}) \T(\tilde{g}) d\tilde{g} = \sum\limits_{i=1}^{m} \displaystyle\int_{\widetilde{\Sp}(W)} \Psi_{i}(\tilde{g_{i}}^{-1} \tilde{g}))\Psi(\tilde{g}) \T(\tilde{g}) d\tilde{g} \\
& = & \sum\limits_{i=1}^{m} \displaystyle\int_{\widetilde{\Sp}(W)} \Psi_{i}(\tilde{g})\Psi(\tilde{g_{i}}\tilde{g}) \T(\tilde{g_{i}}\tilde{g})d\tilde{g} = \sum\limits_{i=1}^{m} \T(\tilde{g_{i}}) \natural \left(\displaystyle\int_{\widetilde{\Sp}(W)} \Psi_{i}(\tilde{g})\Psi(\tilde{g_{i}}\tilde{g}) \T(\tilde{g}) d\tilde{g}\right)\,.
\end{eqnarray*}
The result follows from Remark \ref{RemarkTwisted}.
\end{enumerate}
\label{RemarkCayleyCovering}
\end{rema}

\section{Character Theory and Howe's correspondence}

\label{SectionHoweDuality}

Let $\G$ be a real connected reductive Lie group, $\mathfrak{g} = \Lie(\G)$ its Lie algebra and $\mathfrak{g}_{\mathbb{C}} = \mathfrak{g} \otimes_{\mathbb{R}} \mathbb{C}$ its complexification. We denote by $\mathscr{U}(\mathfrak{g}_{\mathbb{C}})$ the enveloping algebra of $\mathfrak{g}$ (see \cite[Chapter~3.1]{KNA}), $\Z(\mathscr{U}(\mathfrak{g}_{\mathbb{C}}))$ its center and $\D(\G)$ the set of differential operators on $\G$ and by $\D_{\G}(\G)$ the set of left-invariant differential operators on $\G$. As explained in \cite[Chapter~2]{HEL2}, $\D_{\G}(\G)$ is isomorphic to $\mathscr{U}(\mathfrak{g}_{\mathbb{C}})$. Let $\D^{\G}_{\G}(\G)$ be the set of bi-invariant differential operators on $\G$ (which is isomorphic to $\Z(\mathscr{U}(\mathfrak{g}_{\mathbb{C}}))$, see \cite{HEL2}), $\mathscr{D}'(\G)$ be the set of distributions on $\G$ and $\mathscr{D}'(\G)^{\G}$ the set of $\G$-invariant distributions.

\begin{defn}

We say that $T \in \mathscr{D}'(\G)$ is an eigendistribution if there exists $\chi_{T}: \D^{\G}_{\G}(\G) \to \mathbb{C}$ an homomorphism of algebras such that $\D(T) = \chi_{T}(\D)T$ for every $\D \in \D^{\G}_{\G}(\G)$.

\end{defn}

\noindent We will denote by $\Eig(\G)$ the set of eigendistributions on $\G$.

\begin{theo}[Harish-Chandra, \cite{HAR3}]

For every $\G$-invariant eigendistribution $T$ on $\G$, there exists a locally integrable function $f_{T}$ on $\G$, analytic on $\G^{\reg}$, such that $T = T_{f_{T}}$, i.e. for every function $\Psi \in \mathscr{C}^{\infty}_{c}(\G)$, 
\begin{equation*}
T(\Psi) = \displaystyle\int_{\G} f_{T}(g)\Psi(g)dg\,,
\end{equation*}
where $dg$ is a Haar measure on $\G$.

\label{TheoremHCEigen}

\end{theo}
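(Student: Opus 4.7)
The plan is to establish the theorem in three stages: analyticity on $\G^{\reg}$, local integrability across the singular set, and the identification $T = T_{f_T}$ as distributions. Throughout I would localize: it suffices to prove the result in a neighborhood of an arbitrary semisimple element $g_{0} \in \G$. By Harish-Chandra's descent argument — passing to the centralizer $\G^{g_{0}}$ via a transverse slice to the conjugacy class of $g_{0}$ — one reduces to a neighborhood of the identity, and hence (via the exponential map) to an analogous statement on the Lie algebra, where the eigendistribution equation pulls back to a system of constant-coefficient invariant differential operators indexed by $\Z(\mathscr{U}(\mathfrak{g}_{\mathbb{C}}))$.

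For analyticity on $\G^{\reg}$, I would invoke the Harish-Chandra isomorphism $\Z(\mathscr{U}(\mathfrak{g}_{\mathbb{C}})) \simeq \S(\mathfrak{h}_{\mathbb{C}})^{\W}$ for a Cartan subalgebra $\mathfrak{h}$. Using the Weyl integration formula to pull $T$ back to each Cartan subgroup $\H$ and multiplying by the Weyl denominator $\Delta$ yields a distribution $\Delta \cdot T|_{\H^{\reg}}$ annihilated by a large commuting system of differential operators (essentially the $\W$-anti-invariant lift of $\S(\mathfrak{h}_{\mathbb{C}})^{\W}$). The resulting holonomic system has regular singularities only along $\{\Delta = 0\}$, so by classical ODE theory $\Delta \cdot T|_{\H^{\reg}}$ is represented by an analytic function; transporting back by conjugation this produces an analytic function $f_{T}$ on $\G^{\reg}$ with $T|_{\G^{\reg}} = T_{f_{T}|_{\G^{\reg}}}$.

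The main obstacle, and the deepest step, is showing $f_{T} \in \L^{1}_{\loc}(\G)$ rather than merely on $\G^{\reg}$. Here I would follow Harish-Chandra and analyze the radial component of a Casimir-type operator near a singular semisimple element: the indicial equation of the associated regular singular system gives a polynomial bound on $|\Delta \cdot f_{T}|$ as one approaches the singular locus of $\H$. Combining this bound with the Jacobian identity for the conjugation map $\G/\Z_{\G}(\H) \times \H^{\reg} \to \G^{\reg}$, whose Jacobian is $|\Delta|^{2}$, one upgrades an $|\Delta|^{-1}$-type local bound on $f_{T}$ to local integrability on $\G$.

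To conclude $T = T_{f_{T}}$ globally, observe that $S := T - T_{f_{T}}$ is a $\G$-invariant eigendistribution supported in the singular set $\G \setminus \G^{\reg}$. Stratifying this set by conjugacy classes of semisimple centralizers and applying the descent reduction inductively on a dimension parameter, one shows that any $\G$-invariant eigendistribution supported on a proper closed union of orbits must vanish; hence $S = 0$. I expect the genuinely hard part to be the quantitative $\L^{1}_{\loc}$ estimate, whose complete justification in Harish-Chandra's original work relies on an intricate analysis of the radial components and occupies the bulk of \cite{HAR3}.
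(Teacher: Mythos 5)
The paper does not prove Theorem~\ref{TheoremHCEigen}; it is quoted as Harish-Chandra's regularity theorem with a citation (note that the statement's label points to \cite{HAR3}, while the introduction more accurately credits \cite[Theorem~2]{HAR4}, which is the standard reference for the group version). So there is nothing in the paper to compare your argument against line by line, and I am assessing your sketch on its own terms.

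Your outline is a broadly faithful summary of Harish-Chandra's strategy: descent to centralizers and reduction to the Lie algebra, transfer to Cartan subalgebras via the Harish-Chandra isomorphism and the radial components of invariant differential operators, a growth estimate on $\Delta \cdot f_{T}$ near the singular set cancelling against the Jacobian $|\Delta|^{2}$ of the conjugation map, and finally an induction to show that the invariant eigendistribution $S = T - T_{f_{T}}$ supported in $\G \setminus \G^{\reg}$ must vanish. Two points deserve more care than your phrasing gives them. First, the analyticity of $\Delta \cdot T$ on $\H^{\reg}$ does not come from ``regular singularities along $\{\Delta=0\}$'': on $\H^{\reg}$ one shows that $\Delta \cdot T$ satisfies a finite-dimensional constant-coefficient system coming from $\gamma\bigl(\Z(\mathscr{U}(\mathfrak{g}_{\mathbb{C}}))\bigr)$, so solutions are explicit exponential-polynomials and analyticity is immediate; the regular-singular/indicial-equation analysis is what governs the limiting behavior as one approaches a wall, which is precisely where your $\L^{1}_{\loc}$ step needs it. Second, ``any invariant eigendistribution supported on a proper union of orbits vanishes'' is false for invariant distributions in general (nilpotent orbital integrals are the obvious counterexamples); it is the conjunction of invariance, $\Z(\mathscr{U}(\mathfrak{g}_{\mathbb{C}}))$-finiteness, and the support condition, together with the descent induction, that forces $S=0$, and this is one of the subtler parts of \cite{HAR4}. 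With those caveats your sketch is sound as a road map, though of course it compresses the bulk of Harish-Chandra's work into the radial-component and $\L^1_{\loc}$ estimates, which you correctly identify as the hard core.
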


\noindent Let $(\Pi, \mathscr{H})$ be an irreducible quasi-simple representation (see \cite[Section~10]{HAR1}). As explained in \cite{HAR3}, the map
\begin{equation*}
\Theta_{\Pi}: \mathscr{C}^{\infty}_{c}(\G) \ni \Psi \to \tr(\Pi(\Psi)) \in \mathbb{C}
\end{equation*}
is well-defined and is a distribution (in the sense of Laurent Schwartz). In particular, by assumption of $\Pi$, it follows from Theorem \ref{TheoremHCEigen} that there exists $\Theta_{\Pi} \in \mathscr{L}^{1}_{\loc}(\G)$ such that 
\begin{equation*}
\Theta_{\Pi}(\Psi) = \displaystyle\int_{\G} \Theta_{\Pi}(g)\Psi(g) dg
\end{equation*}
for every $\Psi \in \mathscr{C}^{\infty}_{c}(\G)$. The function $\Theta_{\Pi}$ is called the character of $\Pi$.

\bigskip

\noindent We now recall Howe's duality theorem and how it can be studied through characters. Let $\W$ be a finite dimensional vector space over $\mathbb{R}$ endowed with a non-degenerate, skew-symmetric, bilinear form $\langle \cdot, \cdot\rangle$. As in Section \ref{SectionMetaplectic}, we denote by $\Sp(\W)$ the group of isometries of $(\W, \langle \cdot, \cdot\rangle)$, by $(\widetilde{\Sp}(W), \pi)$ the metaplectic cover of $\Sp(\W)$ (see Equation \eqref{MetaplecticGroup}), by $(\omega, \mathscr{H})$ the corresponding Weil representation (see Theorem \ref{MetaplecticRepresentation}) and by $(\omega^{\infty}, \mathscr{H}^{\infty})$ the corresponding smooth representation (see \cite[Chapter~0]{VOG}).

\noindent A dual pair in $\Sp(\W)$ is a pair of subgroups $(\G, \G')$ of $\Sp(\W)$ which are mutually centralizer in $\Sp(W)$. The dual pair is called irreducible if we cannot find any orthogonal decomposition $\W = \W_{1} \oplus \W_{2}$ where both spaces $\W_{1}$ and $\W_{2}$ are $\G\cdot\G'$-invariant, and called reductive if the actions of $\G$ and $\G'$ on $\W$ are both reductive. The set of irreducible reductive dual pairs in $\Sp(\W)$ had been classified by Howe in \cite{HOW5}. 

\begin{rema}

In this paper, we will focus our attention on a dual pair consisting of two unitary groups. More precisely, let $\V$ and $\V'$ be two complex vector spaces endowed with an hermitian form $\left(\cdot, \cdot\right)$ and skew-hermitian form $\left(\cdot, \cdot\right)'$ respectively. We denote by $\U(\V)$ and $\U(\V')$ the corresponding group of isometries and by $\W$ the complex vector space given by $\W = \V \otimes_{\mathbb{C}} \V'$. The space $\W$ can naturally be seen as a real vector space, and to avoid any confusion, we will denote by $\W_{\mathbb{R}}$ the corresponding real vector space. The skew-hermitian form $\b = \left(\cdot, \cdot\right) \otimes \left(\cdot, \cdot\right)'$ on $\W$ defines a skew-symmetric form $\langle\cdot, \cdot\rangle$ on $\W_{\mathbb{R}}$ by $\langle\cdot, \cdot\rangle = \Im(\b)$. In particular, $(\U(\V), \U(\V'))$ is a dual pair in $\Sp(\W_{\mathbb{R}}, \langle\cdot, \cdot\rangle)$.

\noindent If we denote by $(p, q)$ and $(r, s)$ the signatures of $\left(\cdot, \cdot\right)$ and $\left(\cdot, \cdot\right)'$ respectively, we get that $(\U(p, q), \U(r, s))$ form a dual pair in $\Sp(2(p+q)(r+s), \mathbb{R})$.

\end{rema}

\begin{nota}

For a subgroup $\H$ of $\Sp(\W)$, we denote by $\widetilde{\H} = \pi^{-1}(\H)$ the preimage of $\H$ in $\widetilde{\Sp}(\W)$ and let $\mathscr{R}(\widetilde{\H}, \omega)$ be the set of equivalence classes of irreducible admissible representations of $\tilde{\G}$ which are infinitesimally equivalent to a quotient of $\mathscr{H}^{\infty}$ by a closed $\omega^{\infty}(\widetilde{\H})$-invariant subspace. 

\label{NotationsRGOmega}

\end{nota}

\begin{theo}[R. Howe, \cite{HOW1}]

For every reductive dual pair $(\G, \G')$ of $\Sp(\W)$, we get a bijection between $\mathscr{R}(\widetilde{\G}, \omega)$ and $\mathscr{R}(\widetilde{\G'}, \omega)$, whose graph is $\mathscr{R}(\widetilde{\G}\cdot\widetilde{\G'}, \omega)$.

\label{TheoremHowe}

\end{theo}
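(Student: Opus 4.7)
The plan is to follow Howe's original strategy, passing from the analytic level to the Harish-Chandra module level and then exploiting the joint action of $\widetilde{\G} \cdot \widetilde{\G'}$ on the Weil representation. Fix maximal compact subgroups $\widetilde{\K} \subset \widetilde{\G}$ and $\widetilde{\K}' \subset \widetilde{\G'}$, and let $\omega^{\mathrm{fin}}$ denote the subspace of $\widetilde{\K} \cdot \widetilde{\K}'$-finite vectors in $\mathscr{H}$. A first observation, which can be established by an explicit analysis of the joint $\widetilde{\K} \cdot \widetilde{\K}'$-spectrum (using that the Cartan involutions of $\G$ and $\G'$ can be chosen to commute inside $\Sp(\W)$), is that $\omega^{\mathrm{fin}}$ is a finitely generated and admissible $(\mathfrak{g} \oplus \mathfrak{g}', \widetilde{\K} \cdot \widetilde{\K}')$-module. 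By Notation \ref{NotationsRGOmega}, elements of $\mathscr{R}(\widetilde{\G},\omega)$ then correspond to equivalence classes of irreducible quotients of $\omega^{\mathrm{fin}}$ as $(\mathfrak{g},\widetilde{\K})$-modules (and similarly for $\widetilde{\G'}$ and for $\widetilde{\G}\cdot\widetilde{\G'}$).

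Next, for each $\pi \in \mathscr{R}(\widetilde{\G}, \omega)$, I would form the maximal $\pi$-isotypic quotient of $\omega^{\mathrm{fin}}$. By a general algebraic argument (Schur's lemma plus the fact that the space of $\widetilde{\G}$-equivariant maps from $\omega^{\mathrm{fin}}$ to $\pi$ acquires a commuting $(\mathfrak{g}',\widetilde{\K}')$-module structure from the dual pair centralizer property), this quotient is naturally isomorphic to $\pi \otimes \Theta(\pi)$, where $\Theta(\pi)$ is a $(\mathfrak{g}',\widetilde{\K}')$-module. The admissibility of $\omega^{\mathrm{fin}}$ transfers to admissibility of $\Theta(\pi)$, and combined with finite generation this would yield that $\Theta(\pi)$ has finite length.

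The crucial step is then to prove that $\Theta(\pi)$ has a unique irreducible quotient $\theta(\pi)$. For classical dual pairs this is typically achieved by a double-commutant style argument: one shows that the algebra generated by $\omega(\widetilde{\G})$ acting on $\omega^{\mathrm{fin}}$ has commutant generated by $\omega(\widetilde{\G'})$ modulo radical corrections, so that any maximal proper submodule of $\Theta(\pi)$ must be unique. Concretely, one would pick a minimal joint $\widetilde{\K}\cdot\widetilde{\K}'$-type in $\pi\otimes\Theta(\pi)$, use the explicit $\widetilde{\K}\cdot\widetilde{\K}'$-decomposition of the oscillator representation to show this minimal type has multiplicity one, and then propagate uniqueness from the minimal type to the whole module.

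Finally, I would define the map $\theta: \mathscr{R}(\widetilde{\G},\omega) \to \mathscr{R}(\widetilde{\G'},\omega)$ by $\pi \mapsto \theta(\pi)$ and verify bijectivity by symmetry: the same construction applied with the roles of $\G$ and $\G'$ swapped produces an inverse. The graph of this bijection equals $\mathscr{R}(\widetilde{\G}\cdot\widetilde{\G'},\omega)$ because the irreducible $(\mathfrak{g}\oplus\mathfrak{g}',\widetilde{\K}\cdot\widetilde{\K}')$-quotients of $\omega^{\mathrm{fin}}$ are precisely the outer tensor products $\pi \otimes \theta(\pi)$: one inclusion uses the construction of $\Theta(\pi)$, the other uses that any irreducible joint quotient restricts as a $\pi$-isotypic quotient on the $\widetilde{\G}$-side. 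The main obstacle is the uniqueness of the irreducible quotient of $\Theta(\pi)$, which constitutes the technical heart of Howe's theorem and requires the fine joint $\widetilde{\K}\cdot\widetilde{\K}'$-type analysis alluded to above.
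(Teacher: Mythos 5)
The paper offers no proof of this statement: Theorem \ref{TheoremHowe} is stated and attributed directly to Howe \cite{HOW1}, so there is nothing in the paper's text to compare your argument against. Your proposal is therefore being measured against Howe's original proof, and while the overall architecture you describe is the right one (pass to $\widetilde{\K}\cdot\widetilde{\K}'$-finite vectors, form the maximal $\pi$-isotypic quotient $\pi\otimes\Theta(\pi)$, prove $\Theta(\pi)$ has finite length with a unique irreducible quotient via the joint $\widetilde{\K}\cdot\widetilde{\K}'$-type analysis, and then deduce the bijection and the identification of its graph with $\mathscr{R}(\widetilde{\G}\cdot\widetilde{\G}',\omega)$), there is a genuine gap in one of your ``first observations.''

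Specifically, the claim that $\omega^{\mathrm{fin}}$ is \emph{finitely generated} as a $(\mathfrak{g}\oplus\mathfrak{g}',\widetilde{\K}\cdot\widetilde{\K}')$-module is false in general, and since you use it as the input for deducing finite length of $\Theta(\pi)$, that inference is broken. For a concrete counterexample, take $(\G,\G')=(\O(n),\SL(2,\mathbb{R}))$ with the Fock model $\mathbb{C}[z_1,\dots,z_n]$: the action of $\mathscr{U}(\mathfrak{o}(n)\oplus\mathfrak{sl}_2)$ preserves each subspace $r^{2\mathbb{Z}_{\geq 0}}\mathscr{H}_d$ (harmonics of degree $d$ times even powers of $r^2$), and these are infinitely many pairwise non-isomorphic irreducible summands, so no finite set can generate. (Admissibility of $\omega^{\mathrm{fin}}$ as a joint module does hold, by the joint-harmonics decomposition, but that alone does not give finite length of $\Theta(\pi)$.) What Howe actually proves is a ``quasi-admissibility'' statement — for each fixed $\widetilde{\K}$-type the corresponding isotypic subspace is an admissible, finitely generated $(\mathfrak{g}',\widetilde{\K}')$-module — established via the structure of the polynomial-coefficient Weyl algebra and its $\K\cdot\K'$-invariants, and finite generation of $\Theta(\pi)$ is then extracted from the minimal $\widetilde{\K}$-type, not from global finite generation of $\omega^{\mathrm{fin}}$. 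A secondary, smaller gap: the final ``by symmetry'' argument for bijectivity is not formal; one must actually show $\theta(\theta(\pi))\cong\pi$, which requires the same multiplicity-one analysis run in the other direction, not merely the observation that the construction can be performed with the roles of $\G$ and $\G'$ swapped. You have, however, correctly identified the technical heart of the theorem — the uniqueness of the irreducible quotient via control of the minimal joint $\widetilde{\K}\cdot\widetilde{\K}'$-type.
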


\noindent More precisely, if $\Pi \in \mathscr{R}(\widetilde{\G}, \omega)$, we denote by $\N(\Pi)$ the intersection of all the closed $\widetilde{\G}$-invariant subspaces $\mathscr{N}$ such that $\Pi \approx \mathscr{H}^{\infty} / \mathscr{N}$. Then, the space $\mathscr{H}(\Pi) = \mathscr{H}^{\infty} / \N(\Pi)$ is a $\widetilde{\G} \cdot \widetilde{\G'}$-module; more precisely, $\mathscr{H}(\Pi) = \Pi \otimes \Pi'_{1}$, where $\Pi'_{1}$ is a $\widetilde{\G'}$-module, not irreducible in general, but Howe's duality theorem says that there exists a unique irreducible quotient $\Pi'$ of $\Pi'_{1}$ with $\Pi' \in \mathscr{R}(\widetilde{\G'}, \omega)$ and $\Pi \otimes \Pi' \in \mathscr{R}(\widetilde{\G}\cdot\widetilde{\G'}, \omega)$.

\noindent We will denote by 
\begin{equation}
\theta: \mathscr{R}(\widetilde{\G}, \omega) \to \mathscr{R}(\widetilde{\G'}, \omega)
\label{Maptheta}
\end{equation}
the corresponding bijection.

\begin{rema}

If $\G$ is compact, the situation turns out to be slightly easier. The action of $\widetilde{\G}$ on $\mathscr{H}^{\infty}$ can be decomposed as 
\begin{equation*}
\mathscr{H}^{\infty} = \bigoplus\limits_{(\Pi, \mathscr{H}_{\Pi}) \in \mathscr{R}(\widetilde{\G}, \omega)} \mathscr{H}(\Pi)\,,
\end{equation*}
where $\mathscr{H}(\Pi)$ is the closure of $\left\{T(\mathscr{H}_{\Pi}), T \in \Hom_{\widetilde{\G}}(\mathscr{H}_{\Pi}, \mathscr{H}^{\infty}) \neq \{0\}\right\}$ and $\mathscr{R}(\widetilde{\G}, \omega)$ is the set of representations $(\Pi, \V_{\Pi})$ of $\widetilde{\G}$ such that $\Hom_{\widetilde{\G}}(\V_{\Pi}, \omega^{\infty}) \neq \{0\}$. Obviously, $\widetilde{\G'}$ acts on $\mathscr{H}(\Pi)$ and we get that $\mathscr{H}(\Pi) = \Pi \otimes \Pi'$ where $\Pi'$ is an irreducible unitary representation of $\widetilde{\G'}$.

\label{HoweCompact}

\end{rema} 

\section{Cauchy Harish-Chandra integral and transfer of invariant eigendistributions}

\label{SectionCauchyHarishChandra}

We start this section by recalling the construction of the Cauchy--Harish-Chandra integral introduced in Section \cite[Section~2]{TOM1}..

\noindent Let $(\G, \G')$ be an irreducible reductive dual pair in $\Sp(\W)$ and $\T: \widetilde{\Sp}(\W) \to \S^{*}(\W)$ the map defined in Equation \eqref{MapTEmbedding}. Let $\H_{1}, \ldots, \H_{n}$ be a maximal set of non-conjugate Cartan subgroups of $\G$ and let $\H_{i} = \T_{i}\A_{i}$ the decomposition of $\H_{i}$ as in \cite[Section~2.3.6]{WAL}, where $\T_{i}$ is maximal compact in $\H_{i}$. For every $1 \leq i \leq n$, we denote by $\A'_{i}$ the subgroup of $\Sp(\W)$ given by $\A'_{i} = \C_{\Sp(\W)}(\A_{i})$ and let $\A''_{i} = \C_{\Sp(\W)}(\A'_{i})$. As recalled in \cite[Section~1]{TOM1}, there exists an open and dense subset $\W_{\A^{''}_{i}}$, which is $\A^{''}_{i}$-invariant and such that $\A^{''}_{i} \setminus \W_{\A^{''}_{i}}$ is a manifold, endowed with a measure $\underline{dw}$ such that for every $\phi \in \mathscr{C}^{\infty}_{c}(\W)$ such that $\supp(\phi) \subseteq \W_{\A^{''}_{i}}$, 
\begin{equation*}
\displaystyle\int_{\W_{\A^{''}_{i}}} \phi(w)dw = \displaystyle\int_{\A^{''}_{i} \setminus \W_{\A^{''}_{i}}} \displaystyle\int_{\A^{''}_{i}} \phi(aw)da\underline{dw}\,.
\end{equation*}
For every $\Psi \in \mathscr{C}^{\infty}_{c}(\widetilde{\A^{'}_{i}})$, we denote by $\Chc(\Psi)$ the following integral:
\begin{equation*}
\Chc(\Psi) = \displaystyle\int_{\A^{''}_{i} \setminus \W_{\A^{''}_{i}}} \T(\Psi)(w) \underline{dw}\,.
\end{equation*}

\noindent According to Remark \ref{RemarkCayleyCovering}, the previous integral is well-defined and as proved in \cite[Lemma~2.9]{TOM1}, the corresponding map $\Chc: \mathscr{C}^{\infty}_{c}(\widetilde{\A^{'}_{i}}) \to \mathbb{C}$ defines a distribution on $\widetilde{\A^{'}_{i}}$.

\begin{rema}

We say few worlds about the dual pair $(\A'_{i}, \A''_{i})$ and the space $\W_{\A''_{i}}$. Let $\V_{0, i}$ be the subspace of $\V$ on which $\A_{i}$ acts trivially and $\V_{1, i} = \V^{\perp}_{0, i}$. The restriction of $\left(\cdot,\cdot\right)$ to $\V_{1, i}$ is non-degenerate and even dimensional. In particular, there exists a complete polarization of $\V_{1, i}$ of the form $\V_{1, i} = \X_{i} \oplus \Y_{i}$, where both spaces $\X_{i}$ and $\Y_{i}$ are $\H_{i}$-invariant.

\noindent By looking at the action of $\A_{i}$ on $\V_{1, i}$, we get:
\begin{equation*}
\X_{i} = \X^{1}_{i} \oplus \ldots \oplus \X^{k}_{i}\,, \qquad \Y_{i} = \Y^{1}_{i} \oplus \ldots \oplus \Y^{k}_{i}\,,
\end{equation*}
where all the spaces $\X^{j}_{i}, 1 \leq i \leq n, 1 \leq j \leq k$, are $\A_{i}$-invariant and mutually non-equivalent. In particular,
\begin{equation*}
\W = \Hom(\V, \V') = \Hom(\V_{0, i}, \V') \oplus \Hom(\V_{1, i}, \V') = \Hom(\V_{0, i}, \V') \oplus \bigoplus\limits_{j=1}^{k} \left(\Hom(\X^{j}_{i}, \V') \oplus \Hom(\Y^{j}_{i}, \V')\right)\,.
\end{equation*}
To simplify the notations, we denote by $\W^{i}_{j}$ the subspace of $\W$ given by $\Hom(\X^{j}_{i}, \V') \oplus \Hom(\Y^{j}_{i}, \V')$ and $\W_{0, i} = \Hom(\V_{0, i}, \V')$. One can easily check that:
\begin{equation*}
\A'_{i} = \Sp(\W_{0, i}) \times \GL(\Hom(\X^{1}_{i}, \V')_{\mathbb{R}}) \times \ldots \times \GL(\Hom(\X^{k}_{i}, \V')_{\mathbb{R}})
\end{equation*}
and 
\begin{equation*}
\A''_{i} = \O(1) \times \GL(1, \mathbb{R}) \times \ldots \times \GL(1, \mathbb{R})\,.
\end{equation*}
Moreover,
\begin{equation*}
\W_{\A''_{i}} = (\W_{0, i} \setminus \{0\}) \times \widetilde{\W}_{1, i} \times \ldots \times \widetilde{\W}_{n, i}\,,
\end{equation*}
where $\widetilde{\W}_{j, i} = \left\{(x, y) \in \Hom(\X^{j}_{i}, \V') \oplus \Hom(\Y^{j}_{i}, \V'), x \neq 0, y \neq 0\right\}$, $1 \leq j \leq k$.

\label{RemarkFixingNotations}

\end{rema}

\noindent For every $\tilde{h}_{i} \in \widetilde{\H_{i}}$, we denote by $\tau_{\tilde{h}_{i}}$ the map:
\begin{equation*}
\tau_{\tilde{h}_{i}}: \widetilde{\G'} \ni \tilde{g}' \to \tilde{h}\tilde{g}' \in \widetilde{\A^{'}_{i}}\,.
\end{equation*}
As proved in \cite{TOM1}, for every $\tilde{h}_{i} \in \widetilde{\H_{i}}^{\reg}$, the pull-back $\tau^{*}_{\tilde{h}_{i}}(\Chc)$ of $\Chc$ via $\tau_{\tilde{h}_{i}}$ (see \cite[Theorem~8.2.4]{HOR}) is a well-defined distribution on $\widetilde{\G'}$. 

\noindent For every $\tilde{h}_{i} \in \widetilde{\H_{i}}^{\reg}$, we denote by $\Chc_{\tilde{h}_{i}} := \tau^{*}_{\tilde{h}_{i}}(\Chc)$ the corresponding distribution on $\widetilde{\G'}$.

\begin{nota}

\noindent For every reductive group $\G$, we denote by $\mathcal{I}(\G)$ the space of orbital integrals on $\G$ as in \cite[Section~3]{BOU}, endowed with a natural topology defined in \cite[Section~3.3]{BOU}. We denote by $\J_{\G}$ the map $\J_{\G}: \mathscr{C}^{\infty}_{c}(\G) \to \mathscr{C}^{\infty}(\G^{\reg})^{\G}$ given as follows: for every $\gamma \in \G^{\reg}$, there exists a unique, up to conjugation, Cartan subgroups $\H(\gamma)$ of $\G$ such that $\gamma \in \H(\gamma)$, and for every $\Psi \in \mathscr{C}^{\infty}_{c}(\G)$, we define $\J_{\G}(\Psi)(\gamma)$ by:
\begin{equation*}
\J_{\G}(\Psi)(\gamma) = \left|\det(\Id - \Ad(\gamma^{-1}))_{\mathfrak{g}/\mathfrak{h}(\gamma)}\right|^{\frac{1}{2}} \displaystyle\int_{\G/\H(\gamma)} \Psi(g\gamma g^{-1}) \overline{dg}\,.
\end{equation*}

\end{nota}

\noindent As proved in \cite[Theorem~3.2.1]{BOU}, the map:
\begin{equation*}
\J_{\G}: \mathscr{C}^{\infty}_{c}(\G) \to \mathcal{I}(\G)
\end{equation*}
is well-defined and surjective. We denote by $\mathcal{I}(\G)^{*}$ the set of continuous linear forms on $\mathcal{I}(\G)$ and let $\J^{t}_{\G}: \mathcal{I}(\G)^{*} \to \mathscr{D}'(\G)$ be the transpose of $\J_{\G}$. In \cite[Theorem~3.2.1]{BOU}, Bouaziz proved that the map
\begin{equation*}
\J^{t}_{\G}: \mathcal{I}(\G)^{*} \to \mathscr{D}'(\G)^{\G}
\end{equation*}
is bijective.

\bigskip

\noindent We now apply these results to construct a map $\Chc^{*}$, transferring the invariant distributions for a given  dual pair $(\G, \G')$. Let $(\G, \G')$ be an irreducible dual pair in $\Sp(W)$ such that $\rk(\G) \leq \rk(\G')$. For every function $\Psi \in \mathscr{C}^{\infty}_{c}(\widetilde{\G'})$, we denote by $\widetilde{\Chc}(\Psi)$ the $\widetilde{\G}$-invariant function on $\widetilde{\G}^{\reg}$ given by:
\begin{equation*}
\widetilde{\Chc}(\Psi)(\tilde{h}_{i}) = \Chc_{\tilde{h}_{i}}(\Psi)\,, \qquad (\tilde{h}_{i} \in \widetilde{\H_{i}}^{\reg})\,.
\end{equation*}
As proved in \cite{TOM4},  the corresponding map 
\begin{equation*}
\widetilde{\Chc}: \mathscr{C}^{\infty}_{c}(\widetilde{\G'}) \to \mathcal{I}(\widetilde{\G})
\end{equation*}
is well-defined and continuous and factors through $\mathcal{I}(\widetilde{\G'})$, i.e.
\begin{equation*}
\widetilde{\Chc}: \mathcal{I}(\widetilde{\G'}) \to \mathcal{I}(\widetilde{\G})
\end{equation*}
and the corresponding map is continuous. In particular, we get a map
\begin{equation*}
\Chc^{*}: \mathscr{D}'(\widetilde{\G})^{\widetilde{\G}} \ni T \to \J^{t}_{\widetilde{\G'}} \circ \widetilde{\Chc}^{t} \circ (\J^{t}_{\widetilde{\G}})^{-1}(T) \in \mathscr{D}'(\widetilde{\G'})^{\widetilde{\G'}}\,.
\end{equation*}

\begin{theo}

The map $\Chc^{*}$ sends $\Eig(\widetilde{\G})^{\widetilde{\G}}$ into $\Eig(\widetilde{\G'})^{\widetilde{\G'}}$. Moreover, if $\Theta$ is a distribution on $\widetilde{\G}$ given by a locally integrable function $\Theta$ on $\widetilde{\G'}$, we get for every $\Psi \in \mathscr{C}^{\infty}_{c}(\widetilde{\G'})$ that:
\begin{equation}
\Chc^{*}(\Theta)(\Psi) = \sum\limits_{i=1}^{n} \cfrac{1}{|\mathscr{W}(\H_{i})|} \displaystyle\int_{\widetilde{\H_{i}}^{\reg}} \Theta(\tilde{h}_{i})|\det(1-\Ad(\tilde{h}^{-1}_{i}))_{\mathfrak{g}/\mathfrak{h}_{i}}| \Chc(\Psi)(\tilde{h}_{i}) d\tilde{h}_{i}\,,
\label{ChcStar}
\end{equation}
where $\H_{1}, \ldots, \H_{n}$ is a maximal set of non-conjugate Cartan subgroups of $\G$.

\label{TheoremTransferOfEigendistributions}

\end{theo}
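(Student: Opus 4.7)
The proof naturally splits into two parts: establishing the integral formula \eqref{ChcStar} when $\Theta$ is given by a locally integrable function, and showing that $\Chc^*$ preserves the eigendistribution property. The integral formula part is a direct computation by unfolding the definition $\Chc^* = \J^t_{\widetilde{\G'}} \circ \widetilde{\Chc}^t \circ (\J^t_{\widetilde{\G}})^{-1}$, while the eigendistribution part requires an intertwining property of the Cauchy--Harish-Chandra integral with bi-invariant differential operators.

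For the integral formula, I would start by identifying $(\J^t_{\widetilde{\G}})^{-1}(\Theta)$ as an element of $\mathcal{I}(\widetilde{\G})^*$. Applying the Weyl integration formula to $\Theta(\Phi) = \int_{\widetilde{\G}} \Theta(\tilde g) \Phi(\tilde g) d\tilde g$ and using the $\widetilde{\G}$-invariance of $\Theta$ on $\widetilde{\G}^{\reg}$, one obtains
\begin{equation*}
\Theta(\Phi) = \sum_{i=1}^{n} \frac{1}{|\mathscr{W}(\H_i)|} \int_{\widetilde{\H_i}^{\reg}} \left|\det(\Id - \Ad(\tilde h_i^{-1}))_{\mathfrak{g}/\mathfrak{h}_i}\right|^{\frac{1}{2}} \Theta(\tilde h_i) \J_{\widetilde{\G}}(\Phi)(\tilde h_i) d\tilde h_i
\end{equation*}
for every $\Phi \in \mathscr{C}^{\infty}_c(\widetilde{\G})$. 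This forces $(\J^t_{\widetilde{\G}})^{-1}(\Theta)$ to be the continuous linear form on $\mathcal{I}(\widetilde{\G})$ sending $F \mapsto \sum_i \frac{1}{|\mathscr{W}(\H_i)|} \int_{\widetilde{\H_i}^{\reg}} |\det(1-\Ad)|^{\frac{1}{2}}\Theta(\tilde h_i) F(\tilde h_i) d\tilde h_i$. The definition of $\Chc^*$ then gives $\Chc^*(\Theta)(\Psi) = (\J^t_{\widetilde{\G}})^{-1}(\Theta)(\widetilde{\Chc}(\Psi))$ for $\Psi \in \mathscr{C}^{\infty}_c(\widetilde{\G'})$, and substituting $\widetilde{\Chc}(\Psi)(\tilde h_i) = \Chc_{\tilde h_i}(\Psi) = \Chc(\Psi)(\tilde h_i)$ yields exactly \eqref{ChcStar} (with the determinant to the power $\tfrac{1}{2}$, matching the introduction).

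For the preservation of the eigendistribution property, the key idea is to exploit a Capelli-type identity for the Weil representation: the centralizer structure of the dual pair $(\G, \G')$ in $\Sp(\W)$ produces an algebra homomorphism $\rho: \Z(\mathscr{U}(\mathfrak{g}_{\mathbb{C}})) \to \Z(\mathscr{U}(\mathfrak{g}'_{\mathbb{C}}))$ such that the images $d\omega(z)$ and $d\omega(\rho(z))$ coincide on the Weil representation. Applied to the distribution $\T$ on $\W$, this translates into an intertwining relation of the form $\widetilde{\Chc}(\rho(z)\cdot\Psi) = z_{\widetilde{\G}}\cdot \widetilde{\Chc}(\Psi)$ on $\widetilde{\G}^{\reg}$, where $z_{\widetilde{\G}}$ denotes the corresponding bi-invariant operator on $\widetilde{\G}$. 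Dualizing and combining with the bijectivity of $\J^t$ established by Bouaziz, one deduces that if $z\cdot\Theta = \chi_{\Theta}(z)\Theta$ for all $z \in \D^{\widetilde{\G}}_{\widetilde{\G}}(\widetilde{\G})$, then $\rho(z)\cdot \Chc^*(\Theta) = \chi_{\Theta}(z) \Chc^*(\Theta)$, showing that $\Chc^*(\Theta) \in \Eig(\widetilde{\G'})^{\widetilde{\G'}}$ with eigencharacter $\chi_{\Theta} \circ \rho^{-1}$ on $\rho(\Z(\mathscr{U}(\mathfrak{g}_{\mathbb{C}})))$, extended using Harish-Chandra's characterization of central characters.

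The main obstacle is precisely the verification of this last intertwining property: one must show that $\widetilde{\Chc}$ transports bi-invariant differential operators on $\widetilde{\G'}$ into bi-invariant differential operators on $\widetilde{\G}$ via the dual-pair correspondence of centers. This uses in an essential way the definition of $\T$ as $\Theta(\tilde g)\chi_{c(g)} \mu_{(g-1)\W}$ and the differential equations it satisfies coming from the realization of $\mathfrak{sp}(\W)$ as polynomials of degree two on $\W$; the identification of the image of $\Z(\mathscr{U}(\mathfrak{g}_{\mathbb{C}}))$ inside the $\widetilde{\G}\cdot\widetilde{\G'}$-invariant part of the enveloping algebra of $\mathfrak{sp}(\W)_{\mathbb{C}}$, together with the analysis of the measure $\underline{dw}$ on $\A''_i \setminus \W_{\A''_i}$ described in Remark \ref{RemarkFixingNotations}, is the technical heart of the argument and the part that requires the greatest care to make rigorous.
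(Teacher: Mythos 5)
This theorem is not proved in the paper: it is stated and taken as given from Bernon--Przebinda, cited as \cite{TOM4}. So there is no proof of the paper's to compare against, and your attempt can only be judged on its own terms.

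The first half of your argument --- identifying $(\J^{t}_{\widetilde{\G}})^{-1}(\Theta)$ via the Weyl integration formula and unwinding the composite $\J^{t}_{\widetilde{\G'}}\circ\widetilde{\Chc}^{t}\circ(\J^{t}_{\widetilde{\G}})^{-1}$ --- is correct and is essentially forced once one views $\widetilde{\Chc}$ as the factored map $\mathcal{I}(\widetilde{\G'})\to\mathcal{I}(\widetilde{\G})$, so that $\Chc^{*}(\Theta)(\Psi)=L(\widetilde{\Chc}(\Psi))$ with $L=(\J^{t}_{\widetilde{\G}})^{-1}(\Theta)$. Your observation that the computation yields $|\det(\Id-\Ad(\tilde{h}_{i}^{-1}))|^{1/2}$ rather than the first power appearing in the displayed Equation \eqref{ChcStar} is also correct: the exponent in the theorem statement is inconsistent with the formula quoted in the introduction and with the normalization of $\J_{\G}$ in the paper (which already carries a $|\det|^{1/2}$), so the $1/2$ is the right power.

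The second half, however, contains a genuine gap. You posit a homomorphism $\rho\colon\Z(\mathscr{U}(\mathfrak{g}_{\mathbb{C}}))\to\Z(\mathscr{U}(\mathfrak{g}'_{\mathbb{C}}))$, i.e.\ from the \emph{smaller} center to the \emph{larger} one, with $d\omega(z)=d\omega(\rho(z))$, and from the resulting intertwining $\widetilde{\Chc}(\rho(z)\cdot\Psi)=z_{\widetilde{\G}}\cdot\widetilde{\Chc}(\Psi)$ you deduce that $\Chc^{*}(\Theta)$ is an eigendistribution for the subalgebra $\rho(\Z(\mathscr{U}(\mathfrak{g}_{\mathbb{C}})))$ of $\D^{\widetilde{\G'}}_{\widetilde{\G'}}(\widetilde{\G'})$. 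But when $\rk(\G)<\rk(\G')$ the image of $\rho$ is a proper subalgebra (already visible for $(\O(1),\Sp(2n,\mathbb{R}))$: $\Z(\mathscr{U}(\mathfrak{o}(1)_{\mathbb{C}}))=\mathbb{C}$, yet $\Z(\mathscr{U}(\mathfrak{sp}(2n)_{\mathbb{C}}))$ is a polynomial algebra of rank $n$), and an invariant distribution that is an eigenvector for a proper subalgebra of $\D^{\G'}_{\G'}(\G')$ need not be an eigendistribution in the sense of the paper's definition. The phrase ``extended using Harish-Chandra's characterization of central characters'' does not close this gap. What one actually needs is the correspondence of infinitesimal characters in the \emph{opposite} direction: via the Harish-Chandra isomorphism and the embedding $\mathfrak{h}_{\mathbb{C}}\hookrightarrow\mathfrak{h}'_{\mathbb{C}}$ there is a homomorphism $\sigma\colon\Z(\mathscr{U}(\mathfrak{g}'_{\mathbb{C}}))\to\Z(\mathscr{U}(\mathfrak{g}_{\mathbb{C}}))$ (the restriction of $W'$-invariant polynomials on $\mathfrak{h}'^{*}_{\mathbb{C}}$ to $\mathfrak{h}^{*}_{\mathbb{C}}$ after the appropriate $\rho$-shift), and the intertwining relation one must establish reads $\widetilde{\Chc}(z'\cdot\Psi)=\sigma(z')_{\widetilde{\G}}\cdot\widetilde{\Chc}(\Psi)$ for \emph{every} $z'\in\Z(\mathscr{U}(\mathfrak{g}'_{\mathbb{C}}))$. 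Since this is defined on all of $\Z(\mathscr{U}(\mathfrak{g}'_{\mathbb{C}}))$, the eigen-property of $\Theta$ then transports to an eigen-property of $\Chc^{*}(\Theta)$ for the full algebra of bi-invariant operators on $\widetilde{\G'}$, with eigencharacter $\chi_{\Theta}\circ\sigma$, and no extension argument is required. Your mention of a section of such a map is not enough; the identity must be proven for the surjection itself, which is indeed the technical content of \cite{TOM4}.
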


\noindent In \cite{TOM1}, T. Przebinda conjectured that the correspondence of characters in the theta correspondence should be obtained via $\Chc^{*}$. More precisely,  

\begin{conj}

Let $\G_{1}$ and $\G'_{1}$ be the Zariski identity components of $\G$ and $\G'$ respectively. Let $\Pi \in \mathscr{R}(\widetilde{\G}, \omega)$ satisfying ${\Theta_{\Pi}}_{|_{\widetilde{\G}/\widetilde{\G_{1}}}} = 0$ if $\G = \O(\V)$, where $\V$ is an even dimensional vector space over $\mathbb{R}$ or $\mathbb{C}$. Then, up to a constant, $\Chc^{*}(\overline{\Theta_{\Pi}}) = \Theta_{\Pi'_{1}}$ on $\widetilde{\G'_{1}}$.

\label{ConjectureHC}

\end{conj}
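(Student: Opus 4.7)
The plan is to establish the identity by showing both sides are invariant eigendistributions on $\widetilde{\G'_{1}}$ with matching eigencharacters, then pin down the scalar by comparison on a single regular orbit. First, by Theorem \ref{TheoremTransferOfEigendistributions}, $\Chc^{*}(\overline{\Theta_{\Pi}})$ already lies in $\Eig(\widetilde{\G'_{1}})^{\widetilde{\G'_{1}}}$, so by Harish-Chandra's regularity theorem (Theorem \ref{TheoremHCEigen}) it is represented by a locally integrable function, analytic on $\widetilde{\G'_{1}}^{\reg}$; the same is true of $\Theta_{\Pi'_{1}}$.

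The second step is to match the infinitesimal characters. The joint action of $\Z(\mathscr{U}(\mathfrak{g}_{\mathbb{C}}))$ and $\Z(\mathscr{U}(\mathfrak{g}'_{\mathbb{C}}))$ on $\omega^{\infty}$ is intertwined by Howe's algebra homomorphism coming from the local theta correspondence; under this homomorphism, the infinitesimal character of $\Pi$ and that of $\Pi'_{1}$ correspond. I would show that $\Chc^{*}$ transforms this correspondence into the equality of eigencharacters on both sides. The key calculation is that for $z' \in \Z(\mathscr{U}(\mathfrak{g}'_{\mathbb{C}}))$ and its pre-image $z \in \Z(\mathscr{U}(\mathfrak{g}_{\mathbb{C}}))$, the twisted convolution with $\T(\tilde{g})$ intertwines $z$ and $z'$ as differential operators on $\W$; integrating against $\underline{dw}$ over $\A''_{i} \setminus \W_{\A''_{i}}$ then yields $\Chc^{*}(z \cdot T) = z' \cdot \Chc^{*}(T)$ at the level of orbital integrals, so both $\Chc^{*}(\overline{\Theta_{\Pi}})$ and $\Theta_{\Pi'_{1}}$ have the same central character.

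The third step is the quantitative matching. Using Equation \eqref{ChcStar}, evaluation of $\Chc^{*}(\overline{\Theta_{\Pi}})$ on a regular element of a Cartan subgroup $\widetilde{\H'} \subseteq \widetilde{\G'_{1}}$ is a finite sum of integrals $\int_{\widetilde{\H_{i}}^{\reg}} \overline{\Theta_{\Pi}}(\tilde{h}) |\det(1-\Ad(\tilde{h})^{-1})_{\mathfrak{g}/\mathfrak{h}_{i}}|^{1/2} \Chc_{\tilde{h}}(\tilde{h}')\,d\tilde{h}$. Using the explicit asymptotic analysis of $\Chc_{\tilde{h}}$ near the identity developed by Bernon--Przebinda in \cite{TOM4} and \cite{TOM2}, I would compare the leading term of this sum against the known asymptotic expansion of $\Theta_{\Pi'_{1}}$ near the identity (obtained from the character formulas for $\Pi'_{1}$). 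The vanishing hypothesis on ${\Theta_{\Pi}}|_{\widetilde{\G}/\widetilde{\G_{1}}}$ in the even-orthogonal case is exactly what is needed so that the integral over the disconnected factor $\O(1)$ of $\A''_{i}$ does not kill the contribution.

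The main obstacle is the uniqueness step: two invariant eigendistributions with the same eigencharacter on a reductive group are not automatically proportional. One must control boundary behaviour and rule out contamination by other irreducible constituents of $\Pi'_{1}$. In the stable-range setting used in the body of the paper, this is tractable because unitarity of $\Pi'$ and the absolute convergence of the Cauchy--Harish-Chandra integral on every Cartan subgroup of $\widetilde{\G'}$ rigidify the answer; outside stable range, the integrand of \eqref{ChcStar} has genuine singularities along the walls. To overcome this in general, I would first prove the identity in a $\widetilde{\G'_{1}}$-invariant neighbourhood of the identity by reducing via the exponential map and the Cayley transform $\tilde{c}$ of Remark \ref{RemarkCayleyCovering} to an identity of eigendistributions on $\mathfrak{g}'$, where Bouaziz's injectivity of $\J^{t}_{\widetilde{\G'}}$ together with Fourier-theoretic arguments on $\mathfrak{sp}(\W)$ forces equality up to a scalar; analytic continuation along $\widetilde{\G'_{1}}^{\reg}$ then propagates the identity globally.
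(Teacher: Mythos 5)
This statement is a \emph{conjecture} in the paper (it is due to Przebinda, stated in \cite{TOM1}), and the paper does not prove it. Remark~\ref{RemarkConjecture} lists the three special cases in which it is currently known ($\G$ compact; stable range with $\Pi$ unitary, via \cite{TOM5}; equal-rank unitary groups with $\Pi$ a discrete series, via \cite{MER2}), and the body of the paper proceeds only under hypotheses that place it inside the stable-range/unitary case. So there is no ``paper's own proof'' to match against, and you should not expect a blind attempt to produce a full proof of what remains an open problem.

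That said, your outline correctly identifies the natural structure of such an argument, and it correctly locates the hard part. Steps one and two are sound in spirit: Theorem~\ref{TheoremTransferOfEigendistributions} does put $\Chc^{*}(\overline{\Theta_{\Pi}})$ in the space of invariant eigendistributions, and matching infinitesimal characters via Howe's homomorphism between the centers of the enveloping algebras is the standard and correct way to constrain the answer. The genuine gap is exactly where you say it is: two invariant eigendistributions with the same infinitesimal character need not be proportional, so the infinitesimal-character matching alone does not close the argument, and the ``quantitative matching'' step is not a routine computation. The proposed workaround --- proving the identity in an invariant neighbourhood of the identity by descending to $\mathfrak{g}'$ and then ``analytic continuation along $\widetilde{\G'_{1}}^{\reg}$'' --- does not survive scrutiny: the character of $\Pi'_{1}$ and the Cauchy--Harish-Chandra side are both real-analytic only on the regular set and are generally given by different formulas on different Weyl chambers (as Theorem~\ref{PropositionFinalFormulaThetaPin} illustrates), so real-analytic continuation across the non-connected regular set does not propagate an identity from a neighbourhood of the identity to all of $\widetilde{\G'_{1}}^{\reg}$; one would instead need to control the jump conditions across walls, which is precisely where singularities of the integrand of \eqref{ChcStar} appear outside stable range. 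Bouaziz's injectivity of $\J^{t}_{\widetilde{\G'}}$ is a tool for reconstructing a distribution from its orbital-integral pairing, not for matching two candidate eigendistributions; it does not by itself force the proportionality you need. In short: the skeleton is reasonable, the obstacle you flag is the true obstacle, and the sketch you give for overcoming it is not yet an argument.
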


\begin{rema}

The conjecture is known to be true in few cases:
\begin{enumerate}
\item $\G$ compact,
\item $(\G, \G')$ in the stable range and $\Pi$ a unitary representation of $\widetilde{\G}$ (see \cite{TOM5}),
\item $(\G, \G') = (\U(p, q), \U(r, s))$, with $p+q = r+s$ and $\Pi$ a discrete series representation of $\widetilde{\G}$ (see \cite{MER2}).
\end{enumerate}

\label{RemarkConjecture}

\end{rema}

\bigskip

\section{Explicit formulas of $\Chc$ for unitary groups}

\label{SectionUnitary}

In this section, we quickly explain how to compute explicitly the Cauchy--Harish-Chandra integral on the different Cartan subgroups. Because our papers will only concerns characters of some unitary groups, we will adapt the results of \cite{TOM4} and \cite{TOM2} in this context, but similar results can be obtained for other dual pairs.

\noindent Let $\V = \mathbb{C}^{p+q}$ and $\V' = \mathbb{C}^{r+s}$ be two complex vector spaces endowed with non-degenerate bilinear forms $\left(\cdot, \cdot\right)$ and $\left(\cdot, \cdot\right)'$ respectively, with $\left(\cdot, \cdot\right)$ hermitian and $\left(\cdot, \cdot\right)'$ skew-hermitian, and let $(p, q)$ (resp. $(r, s)$) be the signature of $\left(\cdot, \cdot\right)$ (resp. $\left(\cdot, \cdot\right)'$). We assume that $p+q \leq r+s$. Let $\mathscr{B}_{\V} = \left\{f_{1}, \ldots, f_{n}\right\}$, $n=p+q$ (resp. $\mathscr{B}_{\V'} = \left\{f'_{1}, \ldots, f'_{n'}\right\}$, $n' = r+s$) be a basis of $\V$ (resp. $\V'$) such that $\Mat\left(\left(\cdot, \cdot\right), \mathscr{B}_{\V}\right) = \Id_{p, q}$ (resp. $\Mat\left(\left(\cdot, \cdot\right)', \mathscr{B}_{\V'}\right) = i\Id_{r, s}$).
Let $\G$ and $\G'$ be the corresponding group of isometries, i.e.
\begin{equation*}
\G = \G(\V, \left(\cdot, \cdot\right)) \approx \left\{g \in \GL(n, \mathbb{C}), \overline{g}^{t}\Id_{p, q}g = \Id_{p, q}\right\}\,, \qquad \G' = \G(\V', \left(\cdot, \cdot\right)') \approx \left\{g \in \GL(n', \mathbb{C}), \overline{g}^{t}\Id_{r, s}g = \Id_{r, s}\right\}\,.
\end{equation*}
where $\approx$ is a Lie group isomorphism.

\noindent Let $\K = \U(p) \times \U(q)$ and $\K' = \U(r) \times \U(s)$ be the maximal compact subgroups of $\G$ and $\G'$ respectively and let $\H$ and $\H'$ be the diagonal Cartan subgroups of $\K$ and $\K'$ respectively. By looking at the action of $\H$ on the space $\V$, we get a decomposition of $\V$ of the form:
\begin{equation*}
\V = \V_{1} \oplus \ldots \oplus \V_{n}\,,
\end{equation*}
where the spaces $\V_{a}$ given by $\V_{a} = \mathbb{C}if_{a}$ are irreducible $\H$-modules. We denote by $\J$ the element of $\mathfrak{h}$ such that $\J = i\Id_{\V}$ and let $\J_{j} = i\E_{j, j}$. Similarly, we write
\begin{equation*}
\V' = \V'_{1} \oplus \ldots \oplus \V'_{n'}\,,
\end{equation*}
with $\V'_{b} = \mathbb{C}if'_{b}$, $\J'$ the element of $\mathfrak{h}'$ given by $\J' = i\Id_{\V'}$ and $\J'_{j} = i\E_{j, j}$. Let $\W = \Hom_{\mathbb{C}}(\V', \V)$ endowed with the symplectic form $\langle\cdot, \cdot\rangle$ given by:
\begin{equation*}
\langle w_{1}, w_{2}\rangle = \tr_{\mathbb{C}/\mathbb{R}}(w^{*}_{2}w_{1})\,, \qquad (w_{1}, w_{2} \in \W)\,,
\end{equation*}
where $w^{*}_{2}$ is the element of $\Hom(\V, \V')$ satisfying:
\begin{equation*}
\left(w^{*}_{2}(v'), v\right) = \left(v', w_{2}(v)\right)'\,, \qquad (v \in \V, v' \in \V')\,.
\end{equation*}
The space $\W$ can be seen as a complex vector space by
\begin{equation*}
\i w = \J \circ w\,, \qquad (w \in \W)\,.
\end{equation*}
We define a double cover $\widetilde{\GL}_{\mathbb{C}}(\W)$ of the complex group $\GL_{\mathbb{C}}(\W)$ by:
\begin{equation*}
\widetilde{\GL}_{\mathbb{C}}(\W) = \left\{\tilde{g} = (g, \xi) \in \GL_{\mathbb{C}}(\W) \times \mathbb{C}^{\times}, \xi^{2} = \det(g)\right\}\,.
\end{equation*}
Because $p+q \leq r+s$, we get a natural embedding of $\mathfrak{h}_{\mathbb{C}}$ into $\mathfrak{h}'_{\mathbb{C}}$ and we denote by $\Z' = \G'^{\mathfrak{h}}$ the centralizer of $\mathfrak{h}$ in $\G'$. 

\begin{nota}

We denote by $\Delta$ (resp. $\Delta(\mathfrak{k})$) the root system corresponding to $(\mathfrak{g}_{\mathbb{C}}, \mathfrak{h}_{\mathbb{C}})$ (resp. $(\mathfrak{k}_{\mathbb{C}}, \mathfrak{h}_{\mathbb{C}})$), by $\Psi$ (resp. $\Psi(\mathfrak{k})$) a system of positive roots of $\Delta$ (resp. $\Delta(\mathfrak{k})$) and let $\Phi = - \Psi$ (resp. $\Phi(\mathfrak{k}) = -\Psi(\mathfrak{k})$) the set of negative roots.

\noindent Let $e_{i}$ be the linear form on $\mathfrak{h}_{\mathbb{C}} = \mathbb{C}^{p+q}$ given by $e_{i}(\lambda_{1}, \ldots, \lambda_{p+q}) = \lambda_{i}$. As explained in \cite[Chapter~2]{KNA}, we know that:
\begin{equation*}
\Delta = \left\{\pm(e_{i} - e_{j}), 1 \leq i < j \leq p+q\right\}\,, \qquad \Psi = \left\{e_{i} - e_{j}, 1 \leq i < j \leq p+q\right\}\,,
\end{equation*}
and
\begin{equation*}
\Delta(\mathfrak{k}) = \left\{\pm(e_{i} - e_{j}), 1 \leq i < j \leq p\right\} \cup \left\{\pm(e_{i} - e_{j}), p+1 \leq i < j \leq p+q\right\}\,, \qquad \Psi(\mathfrak{k}) = \Psi \cap \Delta(\mathfrak{k})\,.
\end{equation*}
\noindent We define $\Delta', \Delta'(\mathfrak{k}), \Psi', \Psi'(\mathfrak{k}), \Phi', \Phi'(\mathfrak{k})$ similarly and denote by $e'_{i}, 1 \leq i \leq r+s$ the linear form on $\mathfrak{h}'_{\mathbb{C}} = \mathbb{C}^{r+s}$ given by $e_{i}(\lambda_{1}, \ldots, \lambda_{r+s}) = \lambda_{i}$.

\label{NotationsRootsUnitary}

\end{nota}

\noindent Let $\H'_{\mathbb{C}}$ be the complexification of $\H'$ in $\GL_{\mathbb{C}}(\W)$. In particular, $\H'_{\mathbb{C}}$ is isomorphic to 
\begin{equation*}
\mathfrak{h}'_{\mathbb{C}} / \left\{\sum\limits_{j=1}^{n'}2\pi x_{j}\J_{j}, x_{j} \in \mathbb{Z}\right\}\,.
\end{equation*}
We denote by $\check{\H}'_{\mathbb{C}}$ the connected two-fold cover of $\H'_{\mathbb{C}}$ isomorphic to
\begin{equation}
\mathfrak{h}'_{\mathbb{C}}/\left\{\sum\limits_{j=1}^{n'}2\pi x_{j}\J'_{j}, \sum\limits_{j=1}^{n'}x_{j} \in 2\mathbb{Z}, x_{j} \in \mathbb{Z}\right\}\,.
\label{DoubleCoverHCRho}
\end{equation}
One can easily check that $\rho' = \frac{1}{2} \sum\limits_{\alpha \in \Psi'} \alpha$  is analytic integral on $\check{\H}'_{\mathbb{C}}$. As explained in \cite[Section~2]{TOM1}, we construct a map $\check{p}: \check{\H}'_{\mathbb{C}} \to \widetilde{\H}'_{\mathbb{C}}$ which is bijective (but not an isomorphism of covering of $\H'_{\mathbb{C}}$ in general).
 
\noindent As explained in Appendix \ref{AppendixCartanUnitary}, every Cartan subgroup of $\G'$ can be parametrized by a subset $\S \subseteq \Psi'^{\st}_{n}$ consisting of non-compact strongly otrhogonal roots. We denote by $\H'(\S)$ the corresponding Cartan subgroup and by $\H'_{\S}$ the subgroup of $\H'_{\mathbb{C}}$ as in Appendix \ref{AppendixCartanUnitary}. Let $\S \subseteq \Psi'^{\st}_{n}$ and $\check{\H}'_{\S}$ the preimage of the Cartan subgroup $\H'_{\S}$ in $\check{\H}'_{\mathbb{C}}$ (see Appendix \ref{AppendixCartanUnitary}). For every $\varphi \in \mathscr{C}^{\infty}_{c}(\widetilde{\G'})$, we denote by $\mathscr{H}_{\S}\varphi$ the function of $\check{\H}'_{\S}$ defined by:
\begin{equation*}
\mathscr{H}_{\S}\varphi(\check{h}') = \varepsilon_{\Psi'_{\S, \mathbb{R}}}(\check{h}') \check{h}'^{\frac{1}{2} \sum\limits_{\alpha \in \Psi'} \alpha} \prod\limits_{\alpha \in \Psi'} (1 - \check{h}'^{-\alpha}) \displaystyle\int_{\G' / \H'(\S)} \varphi(g' c(\S) \check{p}(\check{h}') c(\S)^{-1} g'^{-1}) dg'\H'(\S) \qquad (\check{h}' \in \check{\H}'_{\S})\,,
\end{equation*}
where $\Psi'_{\S, \mathbb{R}}$ is the subset of $\Psi'$ consisting of real roots for $\H'_{\S}$ and $\varepsilon_{\Psi'_{\S, \mathbb{R}}}$ is the function defined on $\check{\H}'^{\reg}_{\S}$ by
\begin{equation*}
\varepsilon_{\Psi'_{\S, \mathbb{R}}}(\check{h}') = \sign\left(\prod\limits_{\alpha \in \Psi'_{\S, \mathbb{R}}} (1 - \check{h}'^{-\alpha})\right)\,,
\end{equation*}

\noindent We denote by $\Delta_{\Psi'}(\check{h}')$ the quantity
\begin{equation*}
\Delta_{\Psi'}(\check{h}') = \check{h}'^{\frac{1}{2} \sum\limits_{\alpha \in \Psi'} \alpha} \prod\limits_{\alpha \in \Psi'} (1 - \check{h}'^{-\alpha})\,, \qquad (\check{h}' \in \check{\H}'_{\S})\,, 
\end{equation*}
and by $\Delta_{\Phi'}$ the function on $\check{\H}'^{\reg}_{\S}$ given by $\Delta_{\Phi'}(\check{h}') = \check{h}'^{\frac{1}{2} \sum\limits_{\alpha \in \Phi'} \alpha} \prod\limits_{\alpha \in \Phi'} (1 - \check{h}'^{-\alpha})$.

\begin{rema}

For every $\check{h}' \in \check{\H}'^{\reg}_{\S}$, $\Delta_{\Phi'}(\check{h}')\Delta_{\Psi'}(\check{h}') = \prod\limits_{\alpha \in \Psi'^{+}}(1 - \check{h}'^{\alpha})(1 - \check{h}'^{-\alpha}) = \prod\limits_{\alpha \in \Psi'^{+}}(1 - \check{h}'^{\alpha})\overline{(1 - \check{h}'^{\alpha})}$. We denote by $|\Delta_{\G'}(\check{h}')|^{2} = \Delta_{\Phi'}(\check{h}')\Delta_{\Psi'}(\check{h}')$.

\label{DeltaSquare}

\end{rema}

\begin{prop}[Weyl's Integration Formula]

For every $\varphi \in \mathscr{C}^{\infty}_{c}(\widetilde{\G'})$, we get:
\begin{equation}
\displaystyle\int_{\widetilde{\G'}} \varphi(\tilde{g}') d\tilde{g}' = \sum\limits_{\S \in \Psi'^{\st}_{n}} m_{\S} \displaystyle\int_{\check{\H}'_{\S}} \varepsilon_{\Psi'_{\S, \mathbb{R}}}(\check{h}') \Delta_{\Phi'}(\check{h}') \mathscr{H}_{\S}\varphi(\check{h}')d\check{h}'\,.
\label{Equation3}
\end{equation}
where $m_{\S}$ are complex numbers. Here, the subsets $\S$ of $\Psi'^{\st}_{n}$ are defined up to equivalence (see Remark \ref{RemarkAppendixB}).

\end{prop}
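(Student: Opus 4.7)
The plan is to derive the formula from the classical Weyl integration formula on $\G'$ lifted to the two-fold cover $\widetilde{\G'}$, and then to reparametrize the integration over each Cartan subgroup $\widetilde{\H'(\S)}$ in terms of $\check{\H}'_{\S}$ using the Cayley transform $c(\S)$ and the bijection $\check{p}$.

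First, I would recall the classical Weyl integration formula on the connected real reductive group $\G'$: for every $\varphi \in \mathscr{C}^{\infty}_{c}(\G')$,
$$\int_{\G'} \varphi(g')\,dg' = \sum_{\S} \frac{1}{|\mathscr{W}(\H'(\S))|} \int_{\H'(\S)^{\reg}} |\det(\Id - \Ad((h')^{-1}))_{\mathfrak{g}'/\mathfrak{h}'(\S)}| \int_{\G'/\H'(\S)} \varphi(g' h' (g')^{-1})\,d\overline{g}'\,dh',$$
where the outer sum ranges over representatives $\S \in \Psi'^{\st}_{n}$ of $\G'$-conjugacy classes of Cartan subgroups, as classified in Appendix \ref{AppendixCartanUnitary}. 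Since $\pi \colon \widetilde{\G'} \to \G'$ is two-to-one and the Cartan subgroups of $\widetilde{\G'}$ are exactly the preimages $\widetilde{\H'(\S)} := \pi^{-1}(\H'(\S))$, this identity lifts verbatim to $\widetilde{\G'}$ up to a factor of $2$ relating the Haar measures.

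Second, on each Cartan I would change variables from $\widetilde{\H'(\S)}^{\reg}$ to $\check{\H}'^{\reg}_{\S}$ via the bijection
$$\check{h}' \in \check{\H}'_{\S} \longmapsto \tilde{h}' := c(\S)\,\check{p}(\check{h}')\,c(\S)^{-1} \in \widetilde{\H'(\S)},$$
obtained by composing the Cayley transform with the analytic isomorphism $\check{p}$ recalled above. Because $c(\S)$ is a fixed element of $\GL_{\mathbb{C}}(\W)$ and $\check{p}$ is an analytic bijection of covers of complex tori with the same underlying real Lie algebra, the pullback $d\tilde{h}'$ differs from $d\check{h}'$ by a constant scalar, which I absorb into $m_{\S}$ together with $|\mathscr{W}(\H'(\S))|^{-1}$ and the covering factor. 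The integrand is then identified as follows: Remark \ref{DeltaSquare} gives $\Delta_{\Phi'}(\check{h}')\Delta_{\Psi'}(\check{h}') = |\Delta_{\G'}(\check{h}')|^{2}$, and the Cayley transform induces an isomorphism of the root systems of $(\mathfrak{g}'_{\mathbb{C}},\mathfrak{h}'_{\mathbb{C}})$ and $(\mathfrak{g}'_{\mathbb{C}},\mathfrak{h}'(\S)_{\mathbb{C}})$ which identifies this quantity with $|\det(\Id - \Ad((\tilde{h}')^{-1}))_{\mathfrak{g}'/\mathfrak{h}'(\S)}|$. Unfolding the definition of $\mathscr{H}_{\S}\varphi$, the integrand on the right-hand side of the proposition becomes
$$\varepsilon_{\Psi'_{\S,\mathbb{R}}}(\check{h}')^{2}\,\Delta_{\Phi'}(\check{h}')\,\Delta_{\Psi'}(\check{h}') \int_{\G'/\H'(\S)} \varphi(g'\tilde{h}'(g')^{-1})\,d\overline{g}' = |\det(\Id - \Ad((\tilde{h}')^{-1}))_{\mathfrak{g}'/\mathfrak{h}'(\S)}| \int_{\G'/\H'(\S)} \varphi(g'\tilde{h}'(g')^{-1})\,d\overline{g}',$$
since $\varepsilon_{\Psi'_{\S,\mathbb{R}}}^{2} = 1$, matching the lifted Weyl integrand term by term and yielding the claimed formula.

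The main obstacle is the sign bookkeeping carried by $\varepsilon_{\Psi'_{\S,\mathbb{R}}}$: under the Cayley transform, the strongly orthogonal non-compact imaginary roots in $\S$ (viewed as roots of the compact Cartan $\H'$) become real roots of $\H'(\S)$, so the product $\Delta_{\Phi'}(\check{h}')\Delta_{\Psi'}(\check{h}')$, computed from the compact-Cartan root system, could a priori disagree in sign with the manifestly non-negative Weyl determinant of $\G'$ at $\tilde{h}'$. The factor $\varepsilon_{\Psi'_{\S,\mathbb{R}}}(\check{h}')$ inside $\mathscr{H}_{\S}\varphi$ combined with the one appearing explicitly in the stated formula contributes exactly $\varepsilon^{2}=1$, which is precisely the sign correction needed to reconcile the two expressions on every connected component of $\check{\H}'^{\reg}_{\S}$; the hard work lies in verifying that these Cayley-induced sign flips match the definition of $\varepsilon_{\Psi'_{\S,\mathbb{R}}}$ uniformly on every Weyl chamber, so that the constant $m_{\S}$ may be chosen independently of $\check{h}'$.
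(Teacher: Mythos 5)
Your proposal gives an actual derivation, whereas the paper's proof of this proposition is simply a citation to \cite[Section~2,~Page~3830]{TOM4}; the two are therefore not directly comparable, and your route is the natural self-contained one: start from the classical Weyl integration formula on $\G'$, lift to $\widetilde{\G'}$, change variables on each Cartan via $c(\S)$ and $\check{p}$, and absorb all Jacobian, Weyl-group, and covering constants into $m_{\S}$. That skeleton is sound.

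There is, however, a sign muddle in the middle of your argument. Your displayed identity
\[
\varepsilon_{\Psi'_{\S,\mathbb{R}}}(\check{h}')^{2}\,\Delta_{\Phi'}(\check{h}')\,\Delta_{\Psi'}(\check{h}') \;=\; \bigl|\det(\Id - \Ad((\tilde{h}')^{-1}))_{\mathfrak{g}'/\mathfrak{h}'(\S)}\bigr|
\]
is false in general: since $\varepsilon_{\Psi'_{\S,\mathbb{R}}}^{2} = 1$ the left side is just the \emph{signed} Weyl determinant $\Delta_{\Phi'}(\check{h}')\Delta_{\Psi'}(\check{h}') = \det(\Id - \Ad(\tilde{h}'^{-1}))_{\mathfrak{g}'/\mathfrak{h}'(\S)}$, not its absolute value. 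For $\H'(\S_t)$ each of the $t$ real roots $\alpha$ in $\Psi'$ contributes $(1-\check{h}'^{\alpha})(1-\check{h}'^{-\alpha}) = 2 - 2\cosh(X) < 0$ on the regular set, while imaginary roots and conjugate-paired complex roots contribute strictly positive factors, so $\Delta_{\Phi'}\Delta_{\Psi'}$ has constant sign $(-1)^{t}$ on all of $\check{\H}'^{\reg}_{\S_t}$. In particular the quantity that Remark \ref{DeltaSquare} denotes $|\Delta_{\G'}(\check{h}')|^{2}$ (and which you invoke as though it were non-negative) is in fact $(-1)^{t}\cdot\bigl|\det(\Id - \Ad(\tilde{h}'^{-1}))\bigr|$; the remark's computation $\prod(1-\check{h}'^{\alpha})(1-\check{h}'^{-\alpha}) = \prod(1-\check{h}'^{\alpha})\overline{(1-\check{h}'^{\alpha})}$ uses $\overline{\check{h}'^{\alpha}} = \check{h}'^{-\alpha}$, which holds only on the compact Cartan. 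The remedy is not the cancellation $\varepsilon^{2}=1$ --- which equals one identically and hence corrects nothing --- but the observation that the residual sign $(-1)^{t}$ is \emph{constant} on $\check{\H}'^{\reg}_{\S_t}$ and can therefore be absorbed into $m_{\S_t}$. Your last sentence, about $m_{\S}$ needing to be chosen independently of $\check{h}'$, identifies the right concern, but it is the chamber-independence of $(-1)^{t}$ (coming from the fixed count of real roots) that must be verified, not any behaviour of $\varepsilon$, which disappears upon squaring.
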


\begin{proof}

See \cite[Section~2,~Page~3830]{TOM4}.

\end{proof}

\begin{rema}

One can easily see that for every $\S \subseteq \Psi'^{\st}_{n}$ and $\varphi \in \mathscr{C}^{\infty}_{c}(\widetilde{\G'})$ such that $\supp(\varphi) \subseteq \widetilde{\G'}\cdot\widetilde{\H'}(\S)^{\reg}$, the Equation \eqref{Equation3} can be written as follow:
\begin{equation*}
\displaystyle\int_{\widetilde{\G'}} \varphi(\tilde{g}') d\tilde{g}' = m_{\S} \displaystyle\int_{\check{\H}'_{\S}} \varepsilon_{\Psi'_{\S, \mathbb{R}}}(\check{h}') \Delta_{\Phi'}(\check{h}') \mathscr{H}_{\S}\varphi(\check{h}')d\check{h}'\,.
\end{equation*}
\label{RemarkWeylIntegration2}
\end{rema}

\noindent For every $\S \subseteq \Psi'^{\st}_{n}$, we denote by $\underline{\S}$ the subset of $\{1, \ldots, r+s\}$ given by $\underline{\S} = \left\{j, \exists \alpha \in \S \text{ such that } \alpha(\J'_{j}) \neq 0\right\}$. Let $\sigma \in \mathscr{S}_{n'}$ and $\S \subseteq \Psi'^{\st}_{n}$, we denote by $\Gamma_{\sigma, \S}$ the subset of $\mathfrak{h}'$ defined as
\begin{equation*}
\Gamma_{\sigma, \S} = \left\{Y \in \mathfrak{h}', \langle Y\cdot, \cdot\rangle_{\sigma\W^{\mathfrak{h}} \cap \sum\limits_{j \notin \underline{\S}} \Hom(\V'_{j}, \V)} > 0\right\}\,,
\end{equation*}
and let $\E_{\sigma, \S} = \widetilde{\exp}(i\Gamma_{\sigma, \S})$ the corresponding subset of $\widetilde{\H}'_{\mathbb{C}}$, where $\widetilde{\exp}$ is a choice of exponential map $\widetilde{\exp}: \mathfrak{h}'_{\mathbb{C}} \to \widetilde{\H}'_{\mathbb{C}}$ obtained by choosing an element $\widetilde{1}$ in $\pi^{-1}\left\{1\right\}$.

\begin{theo}

\label{TheoremTheta}

For every $\check{h} \in \check{\H} = \check{\H}_{\emptyset}$ and $\varphi \in \mathscr{C}(\widetilde{\G'})$, we get:
\begin{equation*}
\det^{\frac{k}{2}}(\check{h})_{\W^{\mathfrak{h}}} \Delta_{\Psi}(\check{h}) \displaystyle\int_{\widetilde{\G'}} \Theta(\check{p}(\check{h})\tilde{g}') \varphi(\tilde{g}')d\tilde{g}' =  
\end{equation*}
\begin{equation*}
\sum\limits_{\sigma \in \mathscr{W}(\H'_{\mathbb{C}})} \sum\limits_{\S \subseteq \Psi'^{\st}_{n}} \M_{\S}(\sigma)\lim\limits_{\underset{r \to 1}{r \in \E_{\sigma, \S}}} \displaystyle\int_{\check{\H}'_{\S}}\cfrac{\det^{-\frac{k}{2}}(\sigma^{-1}(\check{h}'))_{\W^{\mathfrak{h}}} \Delta_{\Phi'(\Z')}(\sigma^{-1}(\check{h}'))}{\det(1-p(h)rp(h'))_{\sigma\W^{\mathfrak{h}}}} \varepsilon_{\Phi'_{\S, \mathbb{R}}}(\check{h}') \mathscr{H}_{\S}(\varphi)(\check{h}') d\check{h}'\,,
\end{equation*}
where $\M_{\S}(\sigma) = \cfrac{(-1)^{u}\varepsilon(\sigma)m_{\S}}{|\mathscr{W}(\Z'_{\mathbb{C}}, \H'_{\mathbb{C}})|}$, $\alpha \in \{0, -1\}$ depends only on the choice of the positive roots $\Psi$ and $\Psi'$, $k \in \{0, -1\}$ is defined by 
\begin{equation*}
k = \begin{cases} -1 & \text{ if } n'-n \in 2\mathbb{Z} \\ 0 & \text{ otherwise } \end{cases}
\end{equation*}
and $\W^{\mathfrak{h}}$ is the set of elements of $\W$ commuting with $\mathfrak{h}$.

\end{theo}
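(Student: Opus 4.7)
My approach follows the standard scheme for Cauchy--Harish-Chandra integrals: reduce the $\widetilde{\G'}$-integral to a sum of integrals over covers of Cartan subgroups via the Weyl integration formula, then compute the metaplectic character $\Theta$ explicitly on products of elements from commuting Cartan subgroups, and finally regularize the resulting singular Cauchy-type kernel. The key structural observation is that $\G$ and $\G'$ mutually centralize in $\Sp(\W)$, so $\check{p}(\check{h})$ and any $g' \in \widetilde{\G'}$ commute in $\Sp(\W)$, and since $\Theta$ (being the $\mathbb{C}^\times$-component of the metaplectic cover) is a central function on $\widetilde{\Sp}(\W)$, the function $\tilde{g}' \mapsto \Theta(\check{p}(\check{h})\tilde{g}')$ is already invariant under $\G'$-conjugation up to controlled cocycle signs.

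I would first apply Equation~\eqref{Equation3} to write
\begin{equation*}
\int_{\widetilde{\G'}} \Theta(\check{p}(\check{h})\tilde{g}') \varphi(\tilde{g}')\,d\tilde{g}' = \sum_{\S \subseteq \Psi'^{\st}_{n}} m_{\S} \int_{\check{\H}'_{\S}} \varepsilon_{\Psi'_{\S,\mathbb{R}}}(\check{h}') \Delta_{\Phi'}(\check{h}') \, \Theta\bigl(\check{p}(\check{h}) c(\S) \check{p}(\check{h}') c(\S)^{-1}\bigr)\, \mathscr{H}_{\S}\varphi(\check{h}')\, d\check{h}',
\end{equation*}
after pulling the $\Theta$-factor out of the inner $\G'/\H'(\S)$-integration by the centrality argument above, which simultaneously produces the factor $\mathscr{H}_\S\varphi$ recognisable from its definition. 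The $\check{p}$-map guarantees that the square-root ambiguities on both sides match.

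The next step is to compute $\Theta(\check{p}(\check{h}) c(\S)\check{p}(\check{h}') c(\S)^{-1})$ explicitly from the defining relation $\xi^{2} = i^{\dim_{\mathbb{R}}(\W)} \det(\J_{g})^{-1}_{\J_{g}(\W)}$ in Equation~\eqref{MetaplecticGroup}. Using the decomposition of $\W$ under the simultaneous action of $\mathfrak{h}$ and $\mathfrak{h}'$ (the subspace $\W^{\mathfrak{h}}$ of $\mathfrak{h}$-invariants carries the ``mixed'' contribution while its orthogonal complement splits into pieces on which $\check{h}$ or $\check{h}'$ acts alone), the determinant on $\J_{g}(\W)$ factors into a product whose non-mixed part yields the Weyl-type numerator $\det^{-k/2}(\sigma^{-1}\check{h}')_{\W^{\mathfrak{h}}} \Delta_{\Phi'(\Z')}(\sigma^{-1}\check{h}')$ balancing the factor $\det^{k/2}(\check{h})_{\W^{\mathfrak{h}}}\Delta_{\Psi}(\check{h})$ on the left side, while the mixed part produces the Cauchy denominator $\det(1 - p(h) p(h'))_{\sigma\W^{\mathfrak{h}}}$. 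The value $k \in \{0,-1\}$, chosen according to the parity of $n'-n$, selects the consistent square root across the two metaplectic covers.

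Because $1 - p(h)p(h')$ is generically singular on $\check{\H}'_{\S}$, the identity above only holds as a distributional boundary value. I would regularize by replacing $p(h')$ by $rp(h')$ with $r \in \E_{\sigma,\S}=\widetilde{\exp}(i\Gamma_{\sigma,\S})$, where the positivity condition defining $\Gamma_{\sigma,\S}$ forces $\det(1-p(h)rp(h'))_{\sigma\W^{\mathfrak{h}}} \neq 0$ and makes the integrand locally integrable; the boundary value as $r \to 1$ depends on the chosen chamber, and summing over $\sigma \in \mathscr{W}(\H'_{\mathbb{C}})$ with coefficients $\M_{\S}(\sigma)$ assembles the different chambers into a single $\mathscr{W}(\H'_{\mathbb{C}})$-invariant distribution, the factor $|\mathscr{W}(\Z'_{\mathbb{C}},\H'_{\mathbb{C}})|^{-1}$ correcting the overcounting coming from the stabilizer of the mixed direction. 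The hardest step will be the last one: controlling the $r\to 1$ limit against the Schwartz orbital integral $\mathscr{H}_\S\varphi$, and tracking the precise constants $\varepsilon(\sigma)$, $(-1)^{u}$ and $m_{\S}$ through the interplay between the Cayley parametrisation, the cocycle $\C$ of Equation~\eqref{MetaplecticGroup}, and the Weyl integration measure---the technical tools for which are exactly what \cite{TOM4} and \cite{TOM2} provide, so the proof reduces to a careful bookkeeping adapted to the unitary case.
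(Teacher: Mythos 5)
The paper does not prove Theorem~\ref{TheoremTheta}; it is quoted from \cite{TOM4} and \cite{TOM1}, and the author uses it as a black box---exactly as the preceding Weyl integration proposition is ``proved'' by a one-line citation to \cite[Section~2,~Page~3830]{TOM4}. There is therefore no in-paper argument for you to match; your proposal has to be read as a gloss on those references.

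As a gloss, the first two moves are sound and do match the Bernon--Przebinda architecture: Weyl integration over $\widetilde{\G'}$, and pulling $\Theta(\check p(\check h)\,\cdot\,)$ out of the orbital integral using the fact that $\widetilde{\G}$ and $\widetilde{\G'}$ commute in $\widetilde{\Sp}(\W)$ and that $\Theta$, being the locally integrable character of $\omega$, is conjugation-invariant on $\widetilde{\Sp}^{c}(\W)$. Where the sketch goes astray is in locating where the sum over $\sigma\in\mathscr{W}(\H'_{\mathbb{C}})$ comes from. You present it as produced at the very end by the $r\to1$ regularization, ``assembling the different chambers into a single $\mathscr{W}(\H'_{\mathbb{C}})$-invariant distribution.'' In fact the $\sigma$-sum has to appear \emph{before} any deformation parameter is introduced: after your step 1 the integrand over $\check{\H}'_{\S}$ contains the single-valued quantity $\Theta(\check p(\check h)c(\S)\check p(\check h')c(\S)^{-1})$, and the crucial intermediate step is to expand it into $\sigma$-indexed summands coming from the weight-space decomposition of $\W$ under the joint action of $\H$ and $\H'_{\mathbb{C}}$, so that $\sigma$ records which weight subspace plays the role of $\sigma\W^{\mathfrak{h}}$ and carries the singular Cauchy kernel $\det(1-p(h)p(h'))^{-1}_{\sigma\W^{\mathfrak{h}}}$. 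Only after that expansion does each summand have a preferred deformation cone $\E_{\sigma,\S}$ along which the kernel is non-vanishing; without it there are no ``different chambers'' to sum, since $\Theta$ itself is a continuous function on $\widetilde{\Sp}^{c}(\W)$, not a chamber-dependent boundary value. The factor $|\mathscr{W}(\Z'_{\mathbb{C}},\H'_{\mathbb{C}})|^{-1}$ then normalizes the overcounting of the embedding $\mathfrak{h}\hookrightarrow\mathfrak{h}'$, rather than correcting a later chamber-summation as you describe. The remaining constants $(-1)^{u}$, $m_{\S}$ and the $\det^{\pm k/2}$ normalization of the two covers are, as you say, exactly the bookkeeping \cite{TOM1}, \cite{TOM4}, \cite{TOM2} are designed to supply.
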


\noindent The theorem \ref{TheoremTheta} tells us how to compute $\Chc_{\tilde{h}}$ for an element $\tilde{h}$ in the compact Cartan $\widetilde{\H} = \widetilde{\H}(\emptyset)$. Using \cite{TOM2}, it follows that the value of $\Chc$ on the other Cartan subgroups can be computed explicitely by knowing how to do it for the compact Cartan.

\begin{rema}

One can easily check that the space $\W^{\mathfrak{h}}$ is given by 
\begin{equation*}
\W^{\mathfrak{h}} = \sum\limits_{i=1}^{n} \Hom(\V'_{i}, \V_{i})\,.
\end{equation*}

\end{rema}

\noindent From now on, we assume that $p \leq q, r \leq s$ and $p \leq r$.

\begin{nota}

For every $t \in [|1, p|]$, we denote by $\S_{t}$ and $\S'_{t}$ the subsets of $\Psi^{\st}_{n}$ and $\Psi'^{\st}_{n}$ respectively given by
\begin{equation*}
\S_{t} = \left\{e_{1} - e_{p+1}, \ldots, e_{t} - e_{p+t}\right\}, \qquad \S'_{t} = \left\{e'_{1} - e'_{r+1}, \ldots, e'_{t} - e'_{r+t}\right\}\,,
\end{equation*}
where the linear forms $e_{k}, e'_{h}$ have been introduced in Notation \ref{NotationsRootsUnitary}.

\end{nota}

\noindent For every $t \in [|0, p|]$, we denote by $\H(\S_{t})$ and $\H'(\S_{t})$ the Cartan subgroups of $\G$ and $\G'$ respectively and let $\H(\S_{t}) = \T(\S_{t})\A(\S_{t})$ (resp. $\H'(\S_{t}) = \T'(\S_{t})\A'(\S_{t}))$ be the decompositions of $\H(\S_{t})$ (resp. $\H'(\S_{t}))$ as in \cite[Section~2.3.6]{WAL}. 

\noindent As in Remark \ref{RemarkFixingNotations}, we denote by $\V_{0, t}$ the subspace of $\V$ on which $\A(\S_{t})$ acts trivially, by $\V_{1, t}$ the orthogonal complement of  $\V_{0, t}$ in $\V$ and by  $\V_{1, t} = \X_{t} \oplus \Y_{t}$ a complete polarization of $\V_{1, t}$. Because $p \leq r$, we have a natural embedding of $\V_{1, t}$ into $\V'$ such that $\X_{t} \oplus \Y_{t}$ is a complete polarization with respect to $\left(\cdot, \cdot\right)'$. We denote by $\U_{t}$ the orthogonal complement of $\V_{1, t}$ in $\V'$; in particular, we get a natural embedding:
\begin{equation*}
\GL(\X_{t}) \times \G(\U_{t}) \subseteq \G' = \U(r, s)\,.
\end{equation*}
We denote by $\T_{1}(\S_{t})$ the maximal subgroup of $\T(\S_{t})$ which acts trivially on $\V_{0, t}$ and let $\T_{2}(\S_{t})$ the subgroup of $\T(\S_{t})$ such that $\T(\S_{t}) = \T_{1}(\S_{t}) \times \T_{2}(\S_{t})$ with $\T_{2}(\S_{t}) \subseteq \G(\V_{0, t})$. In particular,
\begin{equation}
\H(\S_{t}) = \T_{1}(\S_{t}) \times \A(\S_{t}) \times \T_{2}(\S_{t})\,.
\label{DecompositionHSi}
\end{equation}
Similarly, we get a decomposition of $\H'(\S'_{t})$ of the form:
\begin{equation*}
\H'(\S'_{t}) = \T'_{1}(\S'_{t}) \times \A'(\S'_{t}) \times \T'_{2}(\S'_{t})\,.
\end{equation*}

\begin{rema}

One can easily see that for every $0 \leq j < i \leq r$, we get:
\begin{equation*}
\H'(\S'_{i}) = \T'_{1}(\S'_{j}) \times \A'(\S'_{j}) \times \H'(\widetilde{\S}'_{i - j})\,,
\end{equation*}
where $\widetilde{\S}'_{i-j} = \S'_{i} \setminus \S'_{j}$ and $\H'(\widetilde{\S}'_{i - j})$ is the Cartan subgroup of $\U(r-j, s-j)$ whose split part has dimension $i-j$. In particular, 
\begin{equation*}
\H'_{\S'_{i}} = \T'_{1, \S'_{j}} \times \A'_{\S'_{j}} \times \H'_{\widetilde{\S}'_{i-j}}\,.
\end{equation*}

\label{DecompositionHSiHSj}

\end{rema}

\noindent Let $\eta(\S_{t})$ and $\eta'(\S'_{t})$ be the nilpotent Lie subalgebras of $\mathfrak{u}(p, q)$ and $\mathfrak{u}(r, s)$ respectively given by
\begin{equation*}
\eta(\S'_{t}) = \Hom(\X_{t}, \V_{0, t}) \oplus \Hom(\X_{t}, \Y_{t}) \cap \mathfrak{u}(p, q)\,, \qquad \eta'(\S'_{t}) = \Hom(\U_{t}, \X_{t}) \oplus \Hom(\X_{t}, \Y_{t}) \cap \mathfrak{u}(r, s)\,.
\end{equation*}
We will denote by $\W_{0, t}$ the subspace of $\W$ defined by $\Hom(\U_{t}, \V_{0, t})$ and by $\P(\S_{t})$ and $\P'(\S'_{t})$ the parabolic subgroups of $\G$ and $\G'$ respectively whose Levi factors $\L(\S_{t})$ and $\L'(\S'_{t})$ are given by 
\begin{equation*}
\L(\S_{t}) = \GL(\X_{t}) \times \G(\V_{0, t})\,, \qquad \L'(\S'_{t}) = \GL(\X_{t}) \times \G(\U_{t})\,,
\end{equation*}
and by $\N(\S_{t}) := \exp(\eta(\S_{t}))$ and $\N'(\S'_{t}) := \exp(\eta'(\S'_{t}))$ the unipotent radicals of $\P(\S_{t})$ and $\P'(\S'_{t})$ respectively.

\begin{rema}

One can easily check that the forms on $\V_{0, t}$ and $\U_{t}$ have signature $(p-t, q-t)$ and $(r-t, s-t)$ respectively.

\end{rema}

\noindent As proved in \cite[Theorem~0.9]{TOM2}, for every $\tilde{h} = \tilde{t}_{1}\tilde{a}\tilde{t}_{2} \in \widetilde{\H}(\S_{t})^{\reg}$ (using the decomposition of $\H(\S_{t})$ given in Equation \eqref{DecompositionHSi}) and $\varphi \in \mathscr{C}^{\infty}_{c}(\widetilde{\G'})$, we get:
\begin{equation}
|\det(\Ad(\tilde{h}) - \Id)_{\eta(\S_{t})}| \Chc_{\tilde{h}}(\varphi) = 
\label{ChcNonCompact}
\end{equation}
\begin{equation*}
\C_{t}\d_{\S_{t}}(\tilde{h}) \varepsilon(\tilde{t}_{1}\tilde{a}) \displaystyle\int_{\GL(\X_{t})/\T_{1}(\S_{t}) \times \A(\S_{t})} \displaystyle\int_{\widetilde{\G}(\U_{t})} \varepsilon(\tilde{t}_{1} \tilde{a} \tilde{y}) \Chc_{\W_{0, t}}(\tilde{t}_{2}\tilde{y})\d'_{\S_{t}}(g\tilde{t}_{1}\tilde{a}g^{-1}\tilde{y}) \varphi^{\widetilde{\K'}}_{\widetilde{\N'}(\S_{t})}(g\tilde{t}_{1}\tilde{a}g^{-1}\tilde{y}) d\tilde{y} \overline{dg}\,,
\end{equation*}
where $\C_{t}$ is the constant defined given by
\begin{equation}
\C_{t} = \left(\cfrac{2^{t\left(2(p+q+r+s)-4t+1\right)}}{(r+s)^{t}}\right) \left(\cfrac{r+s-2t}{r+s}\right)^{\frac{p+q-2t}{2}} \cfrac{1}{\mu(\K' \cap \L'(\S'_{t})) 2^{t(r+s-2t)}}\,,
\label{ConstantCi}
\end{equation}
$\varepsilon$ is the character defined in \cite[Lemma~6.3]{TOM2}, $\d_{\S_{t}}: \widetilde{\L}(\S_{t}) \to \mathbb{R}$ and $\d'_{\S'_{t}}: \widetilde{\L'}(\S'_{t}) \to \mathbb{R}$ are given by 
\begin{equation*}
\d_{\S_{t}}(\tilde{l}) = |\det(\Ad(\tilde{l})_{\eta(\S_{t})})|^{\frac{1}{2}}, \qquad  \d'_{\S'_{t}}(\tilde{l}') = |\det(\Ad(\tilde{l}')_{\eta'(\S_{t})})|^{\frac{1}{2}}, \qquad \left(\tilde{l} \in \widetilde{\L}(\S_{t}), \tilde{l}' \in \widetilde{\L'}(\S'_{t})\right)\,,
\label{MapGamma}
\end{equation*}
and $\varphi^{\widetilde{\K'}}_{\widetilde{\N'}(\S'_{t})}$ is the Harish-Chandra transform of $\varphi$, i.e. the function on $\widetilde{\L'}(\S'_{t})$ defined by:
\begin{equation*}
\varphi^{\widetilde{\K'}}_{\widetilde{\N'}(\S'_{t})}(\tilde{l}') = \displaystyle\int_{\widetilde{\N'}(\S'_{t})} \displaystyle\int_{\widetilde{\K'}} \varphi(\tilde{k}\tilde{l}'\tilde{n}\tilde{k}^{-1}) d\tilde{k}d\tilde{n}, \qquad \left(\tilde{l}' \in \widetilde{\L'}(\S'_{t})\right)\,.
\end{equation*}

\bigskip

\noindent Let's explain the method we will use in the next section to get character formulas of representations of $\U(n, n+1)$ by using the Cauchy--Harish-Chandra in the stable range.

\noindent Let $(\G, \G') = (\U(p), \U(r, s))$ be a dual pair in $\Sp(2p(r+s), \mathbb{R})$. To avoid any confusions, we will denote by $\omega^{p}_{r, s}$ the metaplectic representation of $\Sp(2p(r+s), \mathbb{R})$ and by $\theta^{p}_{r, s}: \mathscr{R}(\widetilde{\U}(p), \omega^{p}_{r, s}) \to \mathscr{R}(\widetilde{\U}(r, s), \omega^{p}_{r, s})$ the map defined in Equation \eqref{Maptheta}.

\noindent Let $\Pi \in \mathscr{R}(\widetilde{\U}(p), \omega^{p}_{r, s})$. In this case, as explained in Remark \ref{HoweCompact}, we get that $\Pi' = \Pi'_{1}$ and the corresponding representation $\Pi'$ is an irreducible unitary representation of $\widetilde{\U}(r, s)$ (and $\Pi' \in \mathscr{R}(\widetilde{\U}(r, s)), \omega^{p}_{r, s})$. In particular, Theorem \ref{TheoremTheta} tell us how to compute $\Theta_{\Pi'}$ on every Cartan subgroups of $\widetilde{\U}(r, s)$ (see Appendix \ref{ComputationsU(1)} for $p=1$).

\noindent For every $n \geq 0$, we denote by $\G_{n}$ the unitary group corresponding to an hermitian form of signature $(n, n+p)$, i.e. $\G_{n} = \U(n, p+n)$, by $\omega^{r, s}_{n, n+p}$ the metaplectic representation of $\widetilde{\Sp}(\W_{n})$, where $\W_{n} = (\mathbb{C}^{r+s} \otimes_{\mathbb{C}} \mathbb{C}^{2n+p})_{\mathbb{R}}$ and by $\theta^{r, s}_{n, n+p}: \mathscr{R}(\widetilde{\U}(r,s), \omega^{r, s}_{n, n+p}) \to \mathscr{R}(\widetilde{\U}(n,n+p), \omega^{r, s}_{n, n+p})$ the map as in \eqref{Maptheta}.

\noindent Using Kudla's persistence principle (see \cite{KUD}), we know that the representation $\Pi' \in \mathscr{R}(\widetilde{\U}(r, s), \omega^{p}_{r, s})$ satisfies $\theta^{n, p+n}_{r, s}(\Pi') \neq \{0\}$, i.e. $\Pi' \in \mathscr{R}(\widetilde{\U}(r, s), \omega^{r, s}_{n, n+p})$. We denote by $\Pi^{n}_{1}$ the corresponding representation of $\widetilde{\G_{n}}$ as in Section \ref{SectionHoweDuality}, by $\Pi^{n} \in \mathscr{R}(\widetilde{\U}(n, n+p), \omega^{r, s}_{n, n+p})$ it's unique irreducible quotient, and by $\Theta_{\Pi^{n}_{1}}$ and $\Theta_{\Pi^{n}}$ the characters of $\Pi^{n}_{1}$ and $\Pi^{n}$ respectively.

\noindent Using Theorems \ref{TheoremHCEigen} and \ref{TheoremTransferOfEigendistributions}, we know that the $\widetilde{\G_{n}}$-invariant eigendistribution $\Theta'_{n, \Pi'} := \Chc^{*}(\Theta_{\Pi'})$ is given by a locally integrable function $\Theta'_{n, \Pi'}$ on $\widetilde{\G_{n}}$, analytic on $\widetilde{\G_{n}}^{\reg}$. Note that an explicit value of $\Theta'_{n, \Pi'}$ on every Cartan subgroups of $\widetilde{\G_{n}}$ can be obtained using Equation \eqref{ChcStar}, Theorem \ref{TheoremTheta} and Equation \eqref{ChcNonCompact}.

\noindent According to \cite{LOKE}, if $n \geq r+s$, we get that $\Pi^{n}_{1} = \Pi^{n}$ because $\Pi'$ is unitary and by Remark \ref{RemarkConjecture}, it follows that $\Theta'_{n, \Pi'} = \Theta_{\Pi^{n}}$. It is also well-known that for $n \geq 1$, the representations $\Pi^{n}$ are unitary but are not highest weight modules, and in particular, it's character cannot be obtained via Enright's formula (\cite[Corollary~2.3]{ENR}).

\noindent In the next section, we are going to make $\Theta_{\Pi^{n}}$ explicit for $p=r=s=1$. 

\section{Character formulas for some representations of $\U(n, n+1)$}

\label{SectionUnitary2}

We first start with the dual pair $(\G, \G') = (\U(1), \U(1, 1))$ in $\Sp(4, \mathbb{R})$. Because the set of irreducible genuine representations of $\widetilde{\U}(1)$ is isomorphic to $\mathbb{Z}$, the corresponding representation of $\mathscr{R}(\widetilde{\U}(1), \omega^{1}_{1, 1})$ will be denoted by $\Pi_{m}, m \in \mathbb{Z}$ and let $\Pi'_{m}$ be the corresponding representation of $\widetilde{\G'}$. Moreover, as explained in Section \ref{SectionUnitary}, $\Pi^{n} = \theta^{1, 1}_{n, n+1}(\Pi'_{m})$ the lift of $\Pi'_{m}$ on $\widetilde{\G_{n}}$ is non-zero and its character $\Theta_{\Pi^{n}}$ is equal to $\Chc^{*}(\Theta_{\Pi'_{m}})$. In this section, we are going to give an explicit formula for $\Theta_{\Pi^{n}}$ on every Cartan subgroups of $\widetilde{\G_{n}}$.

\begin{rema}

We denote by $\mathfrak{g}, \mathfrak{g}'$ and $\mathfrak{g}_{n}$ the Lie algebras of $\G, \G'$ and $\G_{n}$ respectively. The Lie algebra $\mathfrak{g}'$ is given by
\begin{equation*}
\mathfrak{g}' = \left\{\begin{pmatrix} a & b \\ \bar{b} & d\end{pmatrix}, a, d \in i\mathbb{R}, b \in \mathbb{C}\right\} = \mathbb{R}\begin{pmatrix} i & 0 \\ 0 & i\end{pmatrix} \oplus \mathbb{R}\begin{pmatrix} i & 0 \\ 0 & -i\end{pmatrix} \oplus \mathbb{R}\begin{pmatrix} 0 & 1 \\ 1 & 0 \end{pmatrix} \oplus  \mathbb{R}\begin{pmatrix} 0 & i \\ -i & 0\end{pmatrix}\,.
\end{equation*}
and the two Cartan subgroups of $\G'$, up to conjugation, are of the form{\small
\begin{equation}
\H' = \H'(\S'_{0}) = \left\{\diag(h_{1}, h_{2}), h_{1}, h_{2} \in \U(1)\right\}\,, \qquad \H'(\S'_{1}) = \exp\left(\mathbb{R}\begin{pmatrix} i & 0 \\ 0 & i \end{pmatrix} \oplus \mathbb{R} \begin{pmatrix} 0 & 1 \\ 1 & 0 \end{pmatrix}\right) = \left\{\begin{pmatrix} e^{i\theta}\ch(X) & \sh(X) \\ \sh(X) & e^{i\theta}\ch(X)\end{pmatrix}, \theta, X \in \mathbb{R}\right\}\,,
\label{CartanU(1,1)}
\end{equation}}
where $\S'_{0} = \{\emptyset\}$ and $\S'_{1} = \{e_{1} - e_{2}\}$ (see Appendix \ref{AppendixCartanUnitary}). We denote by $(\V', \left(\cdot, \cdot\right))$ the skew-hermitian form corresponding to $\G'$ and by $(\V_{n}, \left(\cdot, \cdot\right)_{n})$ the hermitian form corresponding to $\G_{n}$. Let $\mathscr{B}_{\V'} = \{f'_{1}, f'_{2}\}$ be a basis of $\V'$ such that $\Mat_{\mathscr{B}'}\left(\cdot, \cdot\right)' = i\Id_{1, 1}$.We have the following complete polarization of $\V'$ 
\begin{equation*}
\V' = \X'_{1} \oplus \Y'_{1}\,, \qquad \X'_{1} = \mathbb{C}(f'_{1} + f'_{2})\,, \qquad \Y'_{1} = \mathbb{C}(f'_{1} - f'_{2})\,,
\end{equation*}
where both $\X'_{1}$ and $\Y'_{1}$ are $\H'_{2}$-invariant. Let $\mathscr{B}_{\V_{n}} = \{f^{n}_{1}, \ldots, f^{n}_{2n+1}\}$ be a basis of $\V_{n}$ such that $\Mat_{\mathscr{B}_{\V_{n}}}\left(\cdot, \cdot\right)_{n} = \Id_{n, n+1}$.

\noindent We consider the embedding of $\V'$ onto $\V_{n}$ sending $f'_{1}$ onto $f^{n}_{1}$ and $f'_{2}$ onto $f^{n}_{2n+1}$. Obviously, $\X'_{1} \oplus \Y'_{1}$ is a complete polarization of $\V' \subseteq \V_{n}$ with respect to $\left(\cdot, \cdot\right)_{n}$. We consider the subspace $\U_{1}$ of $\V_{n}$ given by
\begin{equation*}
\V_{n} = \V' \oplus \U_{1}\,, \qquad \U_{1} = \V'^{\perp}\,,
\end{equation*}
where $\V'^{\perp}$ is the orthogonal complement of $\V'$ in $\V_{n}$ with respect to $\left(\cdot, \cdot\right)_{n}$.

\noindent Let $\G(\U_{1})$ be the group of isometries corresponding to the hermitian space $(\U_{1}, {\left(\cdot, \cdot\right)_{n}}_{|_{\U_{1}}})$. Note that $\G(\U_{1}) \approx \U(n-1, n)$.
\end{rema}

\noindent As explained in Appendix \ref{AppendixCartanUnitary}, for every $0 \leq t \leq n$ and $\S_{t} = \{e_{1} - e_{2n+1}, \ldots, e_{t} - e_{2n+2-t}\}$, we denote by $\H_{n}(\S_{t})$ the Cartan subgroup of $\G_{n}$ whose split part is of dimension $t$ and by $\H_{n, \S_{t}}$ the subgroup of $\H_{n}(\emptyset)_{\mathbb{C}} = \{h = \diag(h_{1}, \ldots, h_{2n+1}), h_{i} \in \mathbb{C}\}$ given by $\H_{n, \S_{t}} = c(\S_{t}) \H_{n}(\S_{t}) c(\S_{t})^{-1}$, where $c(\S_{t})$ is the Cayley transform corresponding to $\S_{t}$ (see Appendix \ref{AppendixCartanUnitary}) We denote by $\P_{n}(\S_{1})$ the parabolic subgroup of $\G_{n}$ whose Levi factor $\L_{n}(\S_{1})$ is given by $\L_{n}(\S_{1}) = \GL(\X_{1}) \times \G(\U_{1})$.

\begin{lemme}

We get $\GL(\X'_{1}) = \H'(\S'_{1})$.

\label{LemmaGL} 

\end{lemme}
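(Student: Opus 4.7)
The plan is a direct matrix computation: I would express both $\GL(\X'_1)$ and $\H'(\S'_1)$ as subgroups of $\G'$ in a well-chosen basis of $\V'$, and check that the two parametrizations produce the same diagonal torus.

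Concretely, the first step would be to change basis to $u := f'_1 + f'_2$ and $v := f'_1 - f'_2$, so that $\X'_1 = \mathbb{C} u$ and $\Y'_1 = \mathbb{C} v$. Using $(f'_1, f'_1)' = i$, $(f'_2, f'_2)' = -i$ and $(f'_1, f'_2)' = 0$, a short calculation shows both lines are isotropic with $(u, v)' = 2i$. The natural embedding of $\GL(\X'_1)$ into $\G'$ as the Levi of the parabolic stabilizing $\X'_1$ then sends $\lambda \in \mathbb{C}^{\times}$ to the endomorphism acting as $\lambda$ on $\X'_1$ and as some $\mu$ on $\Y'_1$; preservation of the skew-hermitian form forces $\lambda \overline{\mu}(u,v)' = (u,v)'$, i.e.\ $\mu = \overline{\lambda}^{-1}$, giving
\begin{equation*}
\GL(\X'_1) = \left\{\diag(\lambda, \overline{\lambda}^{-1}) : \lambda \in \mathbb{C}^{\times}\right\}
\end{equation*}
in the basis $\{u, v\}$.

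For the other side, I would compute the action of a generic element of $\H'(\S'_1)$ from \eqref{CartanU(1,1)} on the vectors $u$ and $v$: summing and subtracting the images of $f'_1$ and $f'_2$ gives $h(\theta, X) u = e^{i\theta + X} u$ and $h(\theta, X) v = e^{i\theta - X} v$, so
\begin{equation*}
\H'(\S'_1) = \left\{\diag(e^{i\theta + X}, e^{i\theta - X}) : \theta, X \in \mathbb{R}\right\}
\end{equation*}
in the same basis. Setting $\lambda := e^{i\theta + X}$, one has $\overline{\lambda}^{-1} = e^{i\theta - X}$, and the map $(\theta, X) \mapsto e^{i\theta + X}$ from $\mathbb{R}^{2}$ to $\mathbb{C}^{\times}$ is surjective. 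The two descriptions coincide, proving the lemma.

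There is no serious obstacle here; the only point to be careful about is matching the canonical Levi embedding $\GL(\X'_1) \hookrightarrow \G'$ with the exponential parametrization of $\H'(\S'_1)$ used in \eqref{CartanU(1,1)}. The choice of the isotropic basis $\{u, v\}$ simultaneously diagonalizes both descriptions, which is the key simplification.
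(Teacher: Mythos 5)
Your proof is correct, and it takes a genuinely different route from the paper's. The paper works at the Lie algebra level in the original basis $\{f'_1, f'_2\}$: it takes a matrix $A = \begin{pmatrix} a & b \\ c & d \end{pmatrix} \in \mathfrak{g}'$, imposes that $A$ preserves the lines $\mathbb{C}(f'_1 + f'_2)$ and $\mathbb{C}(f'_1 - f'_2)$, deduces $a = d$, $b = c$ (hence $a \in i\mathbb{R}$, $b \in \mathbb{R}$), recognizes the resulting Lie algebra as $\mathfrak{h}'(\S'_1)$, and exponentiates. You instead work at the group level after changing to the isotropic basis $\{u, v\}$, where both $\GL(\X'_1)$ (as the Levi of the parabolic stabilizing $\X'_1$) and $\H'(\S'_1)$ become visibly the same diagonal torus $\{\diag(\lambda, \overline{\lambda}^{-1})\}$. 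Your route has the advantage of making the coincidence immediate and of exposing the unitarity constraint $\mu = \overline{\lambda}^{-1}$ as the form-preservation condition; the paper's route avoids the change of basis and stays in the ambient coordinates where $\mathfrak{h}'(\S'_1)$ is already defined, at the cost of a small system of linear equations.

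One caution: your computation $h(\theta, X)u = e^{i\theta+X}u$, $h(\theta, X)v = e^{i\theta-X}v$ is correct for the element
\begin{equation*}
h(\theta, X) = \exp\begin{pmatrix} i\theta & X \\ X & i\theta \end{pmatrix} = \begin{pmatrix} e^{i\theta}\ch X & e^{i\theta}\sh X \\ e^{i\theta}\sh X & e^{i\theta}\ch X \end{pmatrix},
\end{equation*}
which is what the exponential description in \eqref{CartanU(1,1)} actually gives. The explicit matrix displayed there, with $\sh(X)$ rather than $e^{i\theta}\sh(X)$ off the diagonal, is a typo (that matrix is not even in $\U(1,1)$ when $\sin\theta\,\sh X \neq 0$, and applied to $u$ it produces $e^{i\theta}\ch X + \sh X$, not $e^{i\theta+X}$). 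You implicitly worked from the correct exponential form, so the proof stands, but it is worth noting that the displayed matrix cannot be used verbatim.
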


\begin{proof}

The Lie algebra of $\GL(\X'_{1})$ is the set of matrices $A = \begin{pmatrix} a & b \\ c & d \end{pmatrix}$ of $\mathfrak{g}'$ such that:
\begin{equation}
A\begin{pmatrix} 1 \\ 1 \end{pmatrix} = \begin{pmatrix} \alpha \\ \alpha\end{pmatrix} \qquad A\begin{pmatrix} 1 \\ -1 \end{pmatrix} = \begin{pmatrix} \beta \\ -\beta \end{pmatrix}, \qquad (\alpha, \beta \in \mathbb{C})\,.
\label{EquationProofCitation}
\end{equation}
We first assume that $A \in \End(\V')$ satisfies the conditions of Equation \eqref{EquationProofCitation}. Then, we get:
\begin{equation*}
\begin{cases} a + b & = \alpha \\ c + d & = \alpha \end{cases} \qquad \begin{cases} a - b & = \beta \\ c - d & = -\beta \end{cases}
\end{equation*}
In particular, $a+b = c + d$ and $a - b = -c + d$. Then, $a = d$ and $b = c$. In particular, if $A \in \mathfrak{g}'$, we get that $a \in i\mathbb{R}$ and $b \in \mathbb{R}$. In particular, 
\begin{equation*}
\GL(\X'_{1}) = \exp\left(\mathbb{R}\begin{pmatrix} i & 0 \\ 0 & i \end{pmatrix} \oplus \mathbb{R} \begin{pmatrix} 0 & 1 \\ 1 & 0 \end{pmatrix}\right) = \exp(\mathfrak{h}'(\S'_{1})) = \H'(\S'_{1})\,.
\end{equation*}

\end{proof}

\noindent In this section, we are going to determine the value of the character $\Theta_{\Pi^{n}}$ on the $n+1$ different Cartan subgroups of $\G_{n}$. 

\begin{nota}

We denote by $\Delta_{n}$ the set of roots corresponding to $(\mathfrak{g}_{n}, \mathfrak{h}_{n})$, where $\H_{n} = \H_{n}(\emptyset)$ is the compact Cartan of $\G_{n}$, by $\Psi_{n}$ a set of positive roots of $\Delta_{n}$, by $\Phi_{n} = -\Psi_{n}$, by $\Psi^{\st}_{n}(n) = \left\{e_{t} - e_{2n+2-b}, 1 \leq b \leq n\right\}$ the corresponding set of strongly orthogonal roots of $\Psi_{n}$ and by $\Z_{n}$ the subgroup of $\G_{n}$ defined by $\Z_{n} = \G^{\mathfrak{h}'}_{n}$, where $\mathfrak{h}' = \Lie(\H')$ is the Lie algebra of $\H'$ seen as a subspace of $\mathfrak{h}_{n}$.

\noindent We denote by $\eta'(\S'_{1})$ the subspace of $\mathfrak{g}'$ defined by $\Hom(\X'_{1}, \Y'_{1}) \cap \mathfrak{g}'$ and by $\eta_{n}(\S_{1})$ the subspace of $\mathfrak{g}_{n} = \Lie(\G_{n})$ given by $\Hom(\X'_{1}, \U_{1}) \oplus \Hom(\X'_{1}, \Y'_{1}) \cap \mathfrak{g}_{n}$.

\end{nota}

\begin{rema}

As explained in Remark \ref{DecompositionHSiHSj}, we get for every $t \geq 1$ that
\begin{equation*}
\H_{n}(\S_{t}) = \T_{1}(\S_{1}) \times \A(\S_{1}) \times \H_{n-1}(\widetilde{\S}_{t-1})\,,
\end{equation*}
where $\widetilde{\S}_{t-1} = \S_{t} \setminus \left\{e_{1} - e_{2n+1}\right\}$ and $\H_{n-1}(\widetilde{\S}_{t-1})$ is a Cartan subgroup of $\G(\U_{1})$ whose split part is of dimension $t-1$. 
\noindent In particular, every element $\check{h} \in \check{\H}_{n, \S_{t}}$ can be written as $\check{h} = \check{t}\check{a}\check{h}_{1}$ (where, by convention, $t = a = \Id$ and $\check{h} = \check{h}_{1}$ if $t = 0$).

\label{DecompositionTAH}

\end{rema}

\begin{theo}

\label{IntegralFormulaCharacter}
For every $t \in [|0, n|]$ and $\check{h} = \check{t}\check{a}\check{h}_{1} \in \check{\H}_{n, \S_{t}}$ as in Remark \ref{DecompositionTAH}, we get, up to a constant, that:{\small
\begin{equation*}
\Theta_{\Pi^{n}}(c(\S_{t})\check{p}(\check{h})c(\S_{t})^{-1}) = \A \sum\limits_{\sigma \in \mathscr{W}(\H^{\mathbb{C}}_{n})}  \varepsilon(\sigma) \cfrac{\Delta_{\Phi(\Z_{n})}(\sigma^{-1}(\check{h}))}{\Delta_{\Phi_{n}}(\check{h})} \lim\limits_{\underset{r \to 1}{r \in \E_{\sigma, \S_{t}}}} \displaystyle\int_{\check{\H}'} \cfrac{\overline{\Theta_{\Pi'_{m}}(\check{p}(\check{h}')) \Delta_{\Psi'}(\check{h}')}}{\det(1-p(\check{h}')rp(\check{h}))_{\sigma\W^{\mathfrak{h}}}} d\check{h}'
\end{equation*}
\begin{equation*}
+ \delta_{t, 0} \B \cfrac{\overline{\Theta_{\Pi'_{m}}(c(\S_{1})\check{p}(\check{t}\check{a})c(\S_{1})^{-1})}|\Delta_{\Psi'}(\check{t}\check{a})|^{2}\d_{\S'_{1}}(c(\S_{1})\check{p}(\check{t}\check{a})c(\S_{1})^{-1})\d'_{\S_{1}}(c(\S_{t})\check{p}(\check{t}\check{a}\check{h}_{1})c(\S_{t})^{-1})\D(c(\S_{t})\check{p}(\check{h})c(\S_{t})^{-1})|\Delta_{\G(\U_{1})}(\check{h})|^{2}}{\varepsilon(\widetilde{(-1)}c(\widetilde{\S}_{t-1})\check{p}(\check{h}_{1})c(\widetilde{\S}_{t-1})^{-1})^{-1}|\det(\Ad(c(\S_{1})\check{p}(\check{t}'\check{a}')c(\S_{1})^{-1})^{-1} - 1)_{|_{\eta'(\S_{1})}}|\D_{1}(c(\S_{t})\check{p}(\check{h})c(\S_{t})^{-1})|\Delta_{\G_{n}}(\check{h})|^{2}}\,,
\end{equation*}}
where $\delta_{t, 0} = \begin{cases} 0 & \text{ if } t = 0 \\ 1 & \text{ otherwise } \end{cases}$,  $\D$ and $\D_{1}$ are functions on $\widetilde{\H}_{n}(\S_{t})$, $1 \leq t \leq n$, given by
\begin{equation*}
\D_{1}(\tilde{h}) = |\det(\Id - \Ad(\tilde{h})^{-1})_{\mathfrak{l}_{n}(\S_{1})/\mathfrak{h}_{n}(\S_{1})}|^{\frac{1}{2}}\,, \qquad \D(\tilde{h}) = |\det(\Id - \Ad(\tilde{h})^{-1})_{\mathfrak{g}_{n}/\mathfrak{h}_{n}(\S_{1})}|^{\frac{1}{2}}\,, \qquad (\tilde{h} \in \widetilde{\H}_{n}(\S_{t}))\,.
\end{equation*}
and where $\A$ and $\B$ are constants given by 
\begin{equation*}
\A = \cfrac{(-1)^{u}}{2(2n-1)!}\,, \qquad \B = \cfrac{2^{4n+3}(2n-1)m_{\widetilde{\S}_{t-1}}}{2(2n+1)^{2}m_{\S_{t}}}\,.
\end{equation*}

\end{theo}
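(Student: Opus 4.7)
The plan is to combine Equation \eqref{ChcStar} with the two explicit formulas for $\Chc$ (Theorem \ref{TheoremTheta} for the compact Cartan and Equation \eqref{ChcNonCompact} for the split Cartan). Since $\Pi'_{m}$ is unitary and the dual pair $(\G', \G_{n}) = (\U(1,1), \U(n, n+1))$ is in the stable range, Remark \ref{RemarkConjecture}(2) gives $\Theta_{\Pi^{n}} = \Chc^{*}(\overline{\Theta_{\Pi'_{m}}})$ up to a constant. By Equation \eqref{CartanU(1,1)}, $\G' = \U(1,1)$ has exactly two conjugacy classes of Cartan subgroups, $\H' = \H'(\S'_{0})$ and $\H'(\S'_{1})$, so \eqref{ChcStar} yields a decomposition $\Theta_{\Pi^{n}}(\Psi) = I_{0}(\Psi) + I_{1}(\Psi)$ where $I_{0}$ is the integral over $\widetilde{\H'}^{\reg}$ and $I_{1}$ is the integral over $\widetilde{\H'(\S'_{1})}^{\reg}$, both paired against $\Chc_{\tilde{h}'}(\Psi)$ weighted by $\overline{\Theta_{\Pi'_{m}}(\tilde{h}')}$ and the appropriate Weyl--Jacobian factor.

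For $I_{0}$ I would substitute the formula of Theorem \ref{TheoremTheta} which expresses $\Chc_{\check{p}(\check{h}')}(\Psi)$ as a sum over $\sigma \in \mathscr{W}(\H'_{\mathbb{C}})$ and $\S \subseteq \Psi'^{\st}_{n}$ of the Harish-Chandra lift $\mathscr{H}_{\S}\Psi$ to $\check{\H}'_{\S}$. Swapping the order of integration, using Remark \ref{DeltaSquare} to regroup $\Delta_{\Phi'}\Delta_{\Psi'} = |\Delta_{\G'}|^{2}$, and then applying the Weyl Integration Formula \eqref{Equation3} on $\widetilde{\G_{n}}$ to $\Psi$ concentrated near a fixed regular element of $c(\S_{t})\check{\H}_{n, \S_{t}}c(\S_{t})^{-1}$, I can extract the pointwise value of $\Theta_{\Pi^{n}}$ on $\H_{n}(\S_{t})$. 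The ratio $\Delta_{\Phi(\Z_{n})}(\sigma^{-1}(\check{h}))/\Delta_{\Phi_{n}}(\check{h})$ arises from the factor $\Delta_{\Phi'(\Z')}(\sigma^{-1}(\check{h}'))$ in Theorem \ref{TheoremTheta} after transferring through the lift, together with the Weyl denominator produced by \eqref{Equation3} on the $\G_{n}$ side. This produces the first summand of the statement with constant $\A$.

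For $I_{1}$ I would apply Equation \eqref{ChcNonCompact} with $t = 1$. The key simplification is Lemma \ref{LemmaGL}: since $\GL(\X'_{1}) = \H'(\S'_{1})$, the quotient $\GL(\X'_{1})/\T_{1}(\S_{1}) \times \A(\S_{1})$ is trivial, collapsing the inner $\GL$-integral. What remains is an integration of the Harish-Chandra transform $\Psi^{\widetilde{\K'}}_{\widetilde{\N'}(\S_{1})}$ over $\widetilde{\G(\U_{1})} \approx \widetilde{\U}(n-1, n)$, weighted by $\Chc_{\W_{0, 1}}$ and the Jacobian characters $\d_{\S_{1}}, \d'_{\S_{1}}, \varepsilon$. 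Because $\Psi^{\widetilde{\K'}}_{\widetilde{\N'}(\S_{1})}$ is supported in the parabolic $\widetilde{\P}_{n}(\S_{1})$, the resulting distribution vanishes at elements of $\widetilde{\H}_{n}^{\reg}$ (the compact Cartan, $t = 0$) but is non-trivial on every Cartan $\H_{n}(\S_{t})$ with $t \geq 1$, which explains the factor $\delta_{t, 0}$ as defined in the statement. Using the decomposition of Remark \ref{DecompositionHSiHSj} for $\H_{n}(\S_{t}) = \T_{1}(\S_{1}) \times \A(\S_{1}) \times \H_{n-1}(\widetilde{\S}_{t-1})$ together with the Weyl Integration Formula on $\widetilde{\G(\U_{1})}$ (applied exactly as in the compact case to pick out the value on the Cartan $\H_{n-1}(\widetilde{\S}_{t-1})$) produces the second summand, carrying the expected factors $|\Delta_{\G(\U_{1})}|^{2}/|\Delta_{\G_{n}}|^{2}$, $\D, \D_{1}$, and the character $\varepsilon$ from \cite[Lemma~6.3]{TOM2}.

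The main obstacle will be the careful bookkeeping of constants and sign characters. Three kinds of identifications have to be tracked simultaneously: the Jacobian $j_{\mathfrak{sp}(\W)}$ appearing through Remark \ref{RemarkCayleyCovering}, the normalization factor $\C_{t}$ of Equation \eqref{ConstantCi} (evaluated at $t = 1$), and the signs introduced by $\varepsilon_{\Psi'_{\S, \mathbb{R}}}$ and by the Cayley transforms $c(\S_{t}), c(\S_{1})$ relating $\check{\H}'_{\S'_{1}}$ to $\H'(\S'_{1})$. Matching the Haar measures across the three Weyl integration formulas (on $\G', \G_{n}, \G(\U_{1})$) as they appear in extracting pointwise values is what produces the universal constants $\A = (-1)^{u}/(2(2n-1)!)$ and $\B = 2^{4n+3}(2n-1)m_{\widetilde{\S}_{t-1}}/(2(2n+1)^{2}m_{\S_{t}})$; here the exponents reflect the dimensions of the nilpotent radicals $\eta(\S_{1}), \eta'(\S'_{1})$ and the relative normalizations of Haar measures on the compact factors $\K' \cap \L'(\S'_{1})$.
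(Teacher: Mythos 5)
Your roadmap matches the paper's proof: restrict $\Psi$ to have support in $\widetilde{\G_{n}}\cdot\widetilde{\H_{n}}(\S_{t})$, express $\Theta_{\Pi^{n}}(\Psi)$ via Equation \eqref{ChcStar} as the sum of a compact-Cartan contribution ($I_{0}$, processed with Theorem \ref{TheoremTheta}) and a split-Cartan contribution ($I_{1}$, processed with Equation \eqref{ChcNonCompact}), use Lemma \ref{LemmaGL} to kill the $\GL(\X'_{1})$-quotient, and compare integrands against Remark \ref{RemarkWeylIntegration2}. The $t=0$ vanishing of $I_{1}$ and the non-standard meaning of $\delta_{t,0}$ are also correctly read.

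The one step you glide over is the bridge between orbital integrals on two different groups. After substituting \eqref{ChcNonCompact}, the split-Cartan term $I_{1}$ is naturally expressed through the Harish-Chandra transform $\Psi^{\widetilde{\K}_{n}}_{\widetilde{\N}_{n}(\S_{1})}$ evaluated on the Levi $\L_{n}(\S_{1})$, hence through an $\L_{n}(\S_{1})/\H_{n}(\S_{t})$-orbital integral, whereas both the Weyl Integration Formula representation \eqref{ThetaPiOnPsi} of $\Theta_{\Pi^{n}}(\Psi)$ and the compact-Cartan term $I_{0}$ produce $\G_{n}/\H_{n}(\S_{t})$-orbital integrals. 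You cannot compare integrands until you have converted one to the other, and this is exactly Lemma \ref{TechnicalLemma} (Corollary A.4 of \cite{TOM2}): the $\G_{n}$-orbital integral equals $\C_{n,1}\,\D_{1}/\D$ times the $\L_{n}(\S_{1})$-orbital integral of the Harish-Chandra transform. The paper applies this both to \eqref{ThetaPiOnPsi} and to \eqref{EquationCompactEqn} before matching pointwise values; it is also the source of the constant $\C_{n,1}$, whose cancellation inside $\C_{1}$ produces the cleaner form of $\B$ and explains why $\D,\D_{1}$ appear at all. You list $\D,\D_{1}$ as ``expected factors'' but attribute them to the Weyl Integration Formula on $\G(\U_{1})$; in fact they come from this orbital-integral reduction, and without that step the integrand comparison does not close.
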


\noindent Before proving Theorem \ref{IntegralFormulaCharacter}, we recall a lemma concerning orbital integrals.

\begin{lemme}

For every $t \geq 1$, $\tilde{h} \in \widetilde{\H}_{n}(\S_{t})^{\reg}$ and $\Psi \in \mathscr{C}^{\infty}_{c}(\widetilde{\G_{n}})$, we get:
\begin{equation*}
\displaystyle\int_{\G_{n}/\H_{n}(\S_{t})} \Psi(g\tilde{h}g^{-1}) \overline{dg} = \C_{n, 1} \cfrac{\D_{1}(\tilde{h})}{\D(\tilde{h})} \displaystyle\int_{\L_{1}(\S_{1})/\H_{n}(\S_{t})} \Psi^{\widetilde{\K_{n}}}_{\widetilde{\N_{n}}(\S_{1})}(m\tilde{h}m^{-1}) \overline{dm}\,,
\end{equation*}
where $\K_{n}$ is the maximal compact subgroup of $\G_{n}$, $\N_{n}(\S_{1}) = \exp(\eta_{n}(\S_{1}))$ and $\C_{n, 1}$ is the constant given by 
\begin{equation*}
\C_{n, 1} = \cfrac{1}{\mu(\K_{n} \cap \L_{n}(\S_{1}))\sqrt{2}^{\dim_{\mathbb{R}}(\eta_{n}(\S_{1}))}} = \cfrac{1}{\mu(\K_{n} \cap \L_{n}(\S_{1}))\sqrt{2}^{4n-1}}\,.
\end{equation*}

\label{TechnicalLemma}

\end{lemme}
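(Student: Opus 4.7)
This identity is a Harish-Chandra descent formula for orbital integrals, expressing $J_{\G_n}(\tilde h, \Psi) := \int_{\G_n/\H_n(\S_t)} \Psi(g\tilde h g^{-1})\,\overline{dg}$ for $\tilde h$ regular in a Cartan of the Levi $\L_n(\S_1)$ in terms of the orbital integral on $\L_n(\S_1)$ of the Harish-Chandra transform $\Psi^{\widetilde{\K_n}}_{\widetilde{\N_n}(\S_1)}$. The plan is a direct unfolding using the Iwasawa-type decomposition $\G_n = \K_n \cdot \L_n(\S_1) \cdot \N_n(\S_1)$ associated with the maximal parabolic $\P_n(\S_1) = \L_n(\S_1)\N_n(\S_1)$.

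Concretely, I would begin by writing $g = k l n$ with $k \in \K_n$, $l \in \L_n(\S_1)$, $n \in \N_n(\S_1)$. The Haar decomposition takes the form $dg = \mu(\K_n \cap \L_n(\S_1))^{-1} \delta_{\P_n(\S_1)}(l)\, dk\, dl\, dn$, where $\delta_{\P_n(\S_1)}$ is the modular function of the parabolic. By Remark \ref{DecompositionHSiHSj} one has $\tilde h \in \H_n(\S_t) \subseteq \L_n(\S_1)$, so
\[
g \tilde h g^{-1} = k\,(l\tilde h l^{-1})\,(l\,\psi_{\tilde h}(n)\,l^{-1})\,k^{-1}, \qquad \psi_{\tilde h}(n) := \tilde h^{-1} n \tilde h n^{-1} \in \N_n(\S_1).
\]
The successive substitutions $n \mapsto \psi_{\tilde h}(n)$ and $n_1 \mapsto l n_1 l^{-1}$ in the unipotent radical contribute Jacobian factors $|\det(\Ad(\tilde h^{-1}) - \Id)_{\eta_n(\S_1)}|$ and $\delta_{\P_n(\S_1)}(l)$ respectively. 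The latter exactly cancels the modular factor in the Iwasawa measure, leaving an $l$-independent prefactor, and the inner $\widetilde{\K_n} \times \widetilde{\N_n}(\S_1)$-integral collapses to $\Psi^{\widetilde{\K_n}}_{\widetilde{\N_n}(\S_1)}(l\tilde h l^{-1})$. This yields the intermediate identity
\[
J_{\G_n}(\tilde h, \Psi) = \frac{1}{\mu(\K_n \cap \L_n(\S_1))\,|\det(\Id - \Ad(\tilde h^{-1}))_{\eta_n(\S_1)}|}\, \int_{\L_n(\S_1)/\H_n(\S_t)} \Psi^{\widetilde{\K_n}}_{\widetilde{\N_n}(\S_1)}(m\tilde h m^{-1})\, \overline{dm}.
\]

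To match the stated form $\C_{n,1}\,\D_1(\tilde h)/\D(\tilde h)$, I would finally rewrite the Jacobian prefactor using the $\L_n(\S_1)$-module decomposition $\mathfrak{g}_n/\mathfrak{l}_n(\S_1) = \eta_n(\S_1) \oplus \overline{\eta_n(\S_1)}$: since $\Ad(\tilde h^{-1})$ acts on the opposite nilradical with the reciprocal eigenvalues of its action on $\eta_n(\S_1)$, the ratio $\D(\tilde h)/\D_1(\tilde h) = |\det(\Id - \Ad(\tilde h^{-1}))_{\mathfrak{g}_n/\mathfrak{l}_n(\S_1)}|^{1/2}$ can be expressed in terms of $|\det(\Id - \Ad(\tilde h^{-1}))_{\eta_n(\S_1)}|$, up to the measure-conversion factor $\sqrt{2}^{\dim_{\mathbb{R}} \eta_n(\S_1)} = \sqrt{2}^{4n-1}$ relating the Haar measure on $\N_n(\S_1)$ used in the Harish-Chandra transform to the Euclidean measure on the real subspace $\eta_n(\S_1) \subseteq \mathfrak{g}_n/\mathfrak{l}_n(\S_1)$ implicit in the definitions of $\D$ and $\D_1$. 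The main obstacle is precisely this bookkeeping: one must ensure that the Iwasawa factorization, the quotient measures on $\G_n/\H_n(\S_t)$ and $\L_n(\S_1)/\H_n(\S_t)$, the Haar measures on $\widetilde{\N_n}(\S_1)$ and $\widetilde{\K_n}$, and the Euclidean measures on the real nilpotent subalgebras are all normalized consistently so that the explicit constant $\C_{n,1}$ emerges as stated. Once the conventions from \cite{TOM2} are fixed, this final step is a routine computation.
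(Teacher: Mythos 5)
The paper's proof of this lemma is simply a one-line citation: since $t\ge 1$ means $\H_n(\S_t)$ is a Cartan subgroup of the Levi $\L_n(\S_1)$, the identity is an instance of \cite[Corollary~A.4]{TOM2}. Your proposal instead re-derives the descent formula from scratch via the Iwasawa decomposition $\G_n=\K_n\L_n(\S_1)\N_n(\S_1)$. That is a legitimate, more self-contained route, and your unfolding produces the correct intermediate identity
\begin{equation*}
\int_{\G_n/\H_n(\S_t)} \Psi(g\tilde h g^{-1})\,\overline{dg}
= \frac{c}{\bigl|\det(\Id-\Ad(\tilde h^{-1}))_{\eta_n(\S_1)}\bigr|}\,\int_{\L_n(\S_1)/\H_n(\S_t)} \Psi^{\widetilde{\K_n}}_{\widetilde{\N_n}(\S_1)}(m\tilde h m^{-1})\,\overline{dm}\,,
\end{equation*}
with the $\delta_{\P}(l)$ Jacobians cancelling exactly as you say.

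There is, however, a genuine gap in the final reconciliation step. You assert that $\D(\tilde h)/\D_1(\tilde h)$ can be written as $\bigl|\det(\Id-\Ad(\tilde h^{-1}))_{\eta_n(\S_1)}\bigr|$ up to a constant measure-conversion factor $\sqrt{2}^{4n-1}$. That is not the case. Decomposing $\mathfrak g_n/\mathfrak l_n(\S_1)=\eta_n(\S_1)\oplus\overline{\eta_n(\S_1)}$ and pairing positive/negative root spaces gives
\begin{equation*}
\frac{\D(\tilde h)}{\D_1(\tilde h)}
= \bigl|\det\bigl(\Id-\Ad(\tilde h)^{-1}\bigr)_{\mathfrak g_n/\mathfrak l_n(\S_1)}\bigr|^{1/2}
= \d_{\S_1}(\tilde h)\cdot\bigl|\det\bigl(\Ad(\tilde h)^{-1}-\Id\bigr)_{\eta_n(\S_1)}\bigr|\,,
\end{equation*}
where $\d_{\S_1}(\tilde h)=\bigl|\det(\Ad(\tilde h))_{\eta_n(\S_1)}\bigr|^{1/2}=\delta_{\P_n(\S_1)}(\tilde h)^{1/2}$. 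For $\tilde h$ in a Cartan with non-trivial split part this is not constant in $\tilde h$ and cannot be absorbed into $\C_{n,1}$. So your intermediate formula and the target statement differ by the genuinely $\tilde h$-dependent factor $\d_{\S_1}(\tilde h)$. To close the gap you need that the Harish-Chandra transform is normalized with the half-modular factor $\delta_{\P_n(\S_1)}^{1/2}$ (as is standard in Harish-Chandra's descent theory and in \cite{TOM2}); with that convention the extra $\d_{\S_1}(\tilde h)$ appears on the left precisely as required. As written, the paper's in-text definition of $\varphi^{\widetilde{\K'}}_{\widetilde{\N'}(\S'_t)}$ omits that factor, so your argument cannot simply refer to the displayed definition — you must either track the modular normalization explicitly or, like the paper, invoke \cite[Corollary~A.4]{TOM2} where it is built in.
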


\begin{proof}

We see easily that for $t \geq 1$, $\H_{n}(\S_{t})$ is a Cartan subgroup of $\L_{n}(\S_{1})$, and then the result follows from \cite[Corollary~A.4]{TOM2}.

\end{proof}

\begin{proof}[Proof of Theorem \ref{IntegralFormulaCharacter}]

Fix $t \in [|0, n|]$ and $\Psi \in \mathscr{C}^{\infty}_{c}(\widetilde{\G_{n}})$ such that $\supp(\Psi) \subseteq \widetilde{\G_{n}}\cdot \widetilde{\H_{n}}(\S_{t})$. Using Remark \ref{RemarkWeylIntegration2}, it follows that:
\begin{eqnarray}
\Theta_{\Pi^{n}}(\Psi) & = & \displaystyle\int_{\widetilde{\G_{n}}} \Theta_{\Pi^{n}}(\tilde{g}) \Psi(\tilde{g}) d\tilde{g} = m_{\S_{t}} \displaystyle\int_{\check{\H}_{n, \S_{t}}} \Theta_{\Pi^{n}}(c(\S_{t})\check{p}(\check{h})c(\S_{t})^{-1}) \varepsilon_{\Psi_{n, \S_{t}, \mathbb{R}}}(\check{h}) \Delta_{\Phi(n)}(\check{h}) \mathscr{H}_{\S_{t}}\Psi(\check{h}) d\check{h} \nonumber \\ 
 & = & m_{\S_{t}} \displaystyle\int_{\check{\H}_{n, \S_{t}}} \Theta_{\Pi^{n}}(c(\S_{t})\check{p}(\check{h})c(\S_{t})^{-1}) |\Delta_{\G_{n}}(\check{h})|^{2} \displaystyle\int_{\G_{n} / \H_{n}(\S_{t})} \Psi(gc(\S_{t})\check{p}(\check{h})c(\S_{t})^{-1}g^{-1}) \overline{dg}d\check{h}\,. \label{ThetaPiOnPsi}
 \end{eqnarray}
\noindent According to Remark \ref{RemarkConjecture}, the global character $\Theta_{\Pi^{n}}$ of $\Pi^{n}$ is given by
\begin{equation*}
\Theta_{\Pi^{n}}(\Psi) = \frac{1}{2}\displaystyle\int_{\widetilde{\H'}} \Theta_{\Pi'_{m}}(\tilde{h}'_{0}) |\det(\Id - \Ad(\tilde{h}'_{0})^{-1})_{\mathfrak{g}'/\mathfrak{h}'}| \Chc_{\tilde{h}'_{0}}(\Psi) d\tilde{h}'_{0} + \frac{1}{2}\displaystyle\int_{\widetilde{\H'}(\S'_{1})} \Theta_{\Pi'_{m}}(\tilde{h}'_{1}) |\det(\Id - \Ad(\tilde{h}'_{1})^{-1})_{\mathfrak{g}'/\mathfrak{h}'(\S'_{1})}| \Chc_{\tilde{h}'_{1}}(\Psi) d\tilde{h}'_{1}\,,
\end{equation*}
where $\H', \H'(\S_{1})$ are the two Cartan subgroups of $\G'$ (up to conjugation) defined in Equation \eqref{CartanU(1,1)}. Using that $\supp(\Psi) \subseteq \widetilde{\G_{n}}\cdot \widetilde{\H_{n}}(\S_{t})$, we get from Theorem \ref{TheoremTheta} and \cite[Equation~8]{TOM4} that:{\small
\begin{eqnarray}
 & & \displaystyle\int_{\widetilde{\H'}} \overline{\Theta_{\Pi'_{m}}(\tilde{h}'_{0})} |\det(\Id - \Ad(\tilde{h}'_{0})^{-1})_{\mathfrak{g}'/\mathfrak{h}'}| \Chc_{\tilde{h}'_{0}}(\Psi) d\tilde{h}'_{0} =  \displaystyle\int_{\widetilde{\H'}} \displaystyle\int_{\widetilde{\G_{n}}} \overline{\Theta_{\Pi'_{m}}(\tilde{h}'_{0})} |\det(\Id - \Ad(\tilde{h}'_{0})^{-1})_{\mathfrak{g}'/\mathfrak{h}'}|^{2} \Theta(\widetilde{(-1)}\tilde{g}\tilde{h}'_{0}) \Psi(\tilde{g}) d\tilde{g}d\tilde{h}'_{0} \nonumber \\ 
     & = &  \displaystyle\int_{\check{\H}'}\overline{\Theta_{\Pi'_{m}}(\check{p}(\check{h}'_{0})) \Delta_{\Psi'}(\check{h}'_{0})} \left(\Delta_{\Psi'}(\check{h}'_{0}) \displaystyle\int_{\widetilde{\G_{n}}}  \Theta(\check{p}(\check{h}'_{0}) \tilde{g}) \Psi(\tilde{g}) d\tilde{g}\right) d\check{h}'_{0} \nonumber \\
     & = &  \sum\limits_{\sigma \in \mathscr{W}(\H^{\mathbb{C}}_{n})}  \M_{\S_{t}}(\sigma) \lim\limits_{\underset{r \to 1}{r \in \E_{\sigma, \S_{t}}}} \displaystyle\int_{\check{\H}'} \overline{\Theta_{\Pi'_{m}}(\check{p}(\check{h}'_{0})) \Delta_{\Psi'}(\check{h}'_{0})} \displaystyle\int_{\check{\H}^{\reg}_{n, \S_{t}}} \cfrac{\Delta_{\Psi_{n}}(\check{h}) \Delta_{\Phi(\Z_{n})}(\sigma^{-1}(\check{h}))}{\det(1-p(\check{h}'_{0})rp(\check{h}))_{\sigma\W^{\mathfrak{h}'}}} \displaystyle\int_{\G_{n} / \H_{n}(\S_{t})} \Psi(gc(\S_{t})\check{p}(\check{h})c(\S_{t})^{-1}g^{-1}) \overline{dg} d\check{h}'_{0}\,. \label{EquationCompactEqn}
\end{eqnarray}}

\noindent Similarly, by using Equation \eqref{ChcNonCompact}, we get:{\small
\begin{equation}
\displaystyle\int_{\widetilde{\H'}(\S'_{1})} \overline{\Theta_{\Pi'_{m}}(\tilde{h}'_{1})} |\det(\Id - \Ad(\tilde{h}'_{1})^{-1})_{\mathfrak{g}'/\mathfrak{h}'(\S'_{1})}|^{2} \Chc_{\tilde{h}'_{1}}(\Psi) d\tilde{h}'_{1} =
\label{ValueOfChcOnH2}
\end{equation}
\begin{equation*}
\begin{cases} 0 & \text{ if } \S = \emptyset \\ \C_{1}\displaystyle\int_{\widetilde{\H'}(\S'_{1})} \overline{\Theta_{\Pi'_{m}}(\tilde{h}'_{1})}|\det(\Id - \Ad(\tilde{h}'_{1})^{-1})_{\mathfrak{g}'/\mathfrak{h}'(\S'_{1})}|\cfrac{|\det(\Ad(\tilde{h}'_{1}))_{|_{\eta'(\S'_{1})}}|^{\frac{1}{2}}}{|\det(\Ad(\tilde{h}'_{1}) - 1)_{|_{\eta'(\S'_{1})}}|}  \displaystyle\int_{\widetilde{\G}(\U_{1})} |\det(\tilde{h}'_{1}\tilde{u})_{|_{\eta_{n}(\S_{1})}}|^{\frac{1}{2}} \varepsilon(\widetilde{(-1)}\tilde{u}) \Psi^{\widetilde{\K}_{n}}_{\widetilde{\N}_{n}(\S_{1})}(\tilde{h}'_{1}\tilde{u}) d\tilde{u} d\tilde{h}'_{1} & \text{ otherwise } \end{cases}
\end{equation*}}
where the constant $\C_{1}$ defined in Equation \eqref{ConstantCi} is given by
\begin{equation*}
\C_{1} = \cfrac{2^{4n+3}(2n-1)}{(2n+1)^{2}}\C_{n, 1}\,.
\end{equation*}
and $\C_{n, 1}$ is defined in Lemma \ref{TechnicalLemma}. In particular, the theorem follows for $t = 0$, i.e. $\S_{0} = \{\emptyset\}$. From now on, we assume that $t \geq 1$, i.e. without loss of generality that $e_{1} - e_{2n+1} \in \S_{t}$. In this case, using Remark \ref{DecompositionHSiHSj}, we get:
\begin{equation*}
\H_{n}(\S_{t}) = \T_{1}(\S_{1}) \times \A(\S_{1}) \times \H_{n-1}(\widetilde{\S}_{t-1})\,.
\end{equation*}

\noindent The Cartan subgroup $\H_{n}(\S_{t})$ is included in the Levi $\L_{n}(\S_{1}) = \GL(\X'_{1}) \times \G(\U_{1})$ of $\P_{n}(\S_{1})$. In particular, using Lemma \ref{TechnicalLemma}, Equation \ref{EquationCompactEqn} can be written as:{\small
\begin{equation*}
\sum\limits_{\sigma \in \mathscr{W}(\H^{\mathbb{C}}_{n})}  \M_{\S_{t}}(\sigma) \C_{n, 1} \lim\limits_{\underset{r \to 1}{r \in \E_{\sigma, \S_{t}}}} 
\end{equation*}
\begin{equation*}
\displaystyle\int_{\check{\H}'} \overline{\Theta_{\Pi'_{m}}(\check{p}(\check{h}'_{0})) \Delta_{\Psi'}(\check{h}'_{0})} \displaystyle\int_{\check{\H}^{\reg}_{n, \S_{t}}} \cfrac{\Delta_{\Psi_{n}}(\check{h}) \Delta_{\Phi(\Z_{n})}(\sigma^{-1}(\check{h}))}{\det(1-p(\check{h}'_{0})rp(\check{h}))_{\sigma\W^{\mathfrak{h}}}} \cfrac{\D_{1}(c(\S_{t})\check{p}(\check{h})c(\S_{t})^{-1})}{\D(c(\S_{t})\check{p}(\check{h})c(\S_{t})^{-1})} \displaystyle\int_{\L_{n}(\S_{1}) / \H_{n}(\S_{i})} \Psi^{\widetilde{\K}_{n}}_{\widetilde{\N}_{n}(\S_{1})}(gc(\S_{t})\check{p}(\check{h})c(\S_{t})^{-1}g^{-1}) \overline{dg} d\check{h} d\check{h}'_{0}\,.
\end{equation*}}
\noindent Similarly, Equation \eqref{ValueOfChcOnH2} is equal to {\small
\begin{equation}
\C_{1}m_{\widetilde{\S}_{t-1}}\displaystyle\int_{\check{\T}'_{1, \S_{1}}} \displaystyle\int_{\check{\A}'_{\S_{1}}} \overline{\Theta_{\Pi'_{m}}(c(\S_{1})\check{p}(\check{t}'\check{a}')c(\S_{1})^{-1})}|\det(\Id - \Ad(c(\S_{1})\check{p}(\check{t}'\check{a}')c(\S_{1})^{-1})^{-1})_{\mathfrak{g}'/\mathfrak{h}'(\S_{1})}|\cfrac{|\det(\Ad(c(\S_{1})\check{p}(\check{t}'\check{a}')c(\S_{1})^{-1}))_{|_{\eta'(\S_{1})}}|^{\frac{1}{2}}}{|\det(\Ad(c(\S_{1})\check{p}(\check{t}'\check{a}')c(\S_{1})^{-1}) - \Id)_{|_{\eta'(\S_{1})}}|}  
\label{FormulaOnH2}
\end{equation}
\begin{equation*}
\displaystyle\int_{\check{\H}_{n-1, \widetilde{\S}_{t-1}}} \displaystyle\int_{\G(\U_{1}) / \H_{n-1}(\widetilde{\S}_{t-1})} |\Delta_{\G(\U_{1})}(\check{h})|^{2} \det(c(\S_{1})\check{p}(\check{t}'\check{a}')c(\S_{1})^{-1}gc(\widetilde{\S}_{t-1})\check{p}(\check{h})c(\widetilde{\S}_{t-1})^{-1}g^{-1})_{|_{\eta_{n}(\S_{1})}}|^{\frac{1}{2}} \varepsilon(\widetilde{(-1)}c(\widetilde{\S}_{t-1})\check{p}(\check{h})c(\widetilde{\S}_{t-1})^{-1}) 
\end{equation*}
\begin{equation*}
\Psi^{\widetilde{\K_{n}}}_{\widetilde{\N}_{n}(\S_{1})}(c(\S_{1})\check{p}(\check{t}'\check{a}')c(\S_{1})^{-1}gc(\widetilde{\S}_{t-1})\check{p}(\check{h})c(\widetilde{\S}_{t-1})^{-1}g^{-1}) \overline{dg}d\check{h}d\check{a}'d\check{t}'\,. 
\end{equation*}}
where $\widetilde{\S}_{t - 1} = \S_{t} \setminus \{e_{1} - e_{2n+1}\}$. From \eqref{ThetaPiOnPsi}, we get:{\small
\begin{equation}
\Theta_{\Pi^{n}}(\Psi) = m_{\S_{t}} \C_{n, 1} \displaystyle\int_{\check{\H}'_{n, \S_{t}}} \Theta_{\Pi^{n}}(c(\S_{t})\check{p}(\check{h})c(\S_{t})^{-1}) |\Delta_{\G_{n}}(\check{h})|^{2} \cfrac{\D_{1}(c(\S_{t})\check{p}(\check{h})c(\S_{t})^{-1})}{\D(c(\S_{t})\check{p}(\check{h})c(\S_{t})^{-1})}\displaystyle\int_{\L_{n}(\S_{1}) / \H_{n}(\S_{t})} \Psi(gc(\S_{t})\check{p}(\check{h})c(\S_{t})^{-1}g^{-1}) \overline{dg}d\check{h}
\label{ThetaPiNU(1,1)}
\end{equation}}
and using that:
\begin{equation*}
\L_{n}(\S_{1})/\H_{n}(\S_{t}) = \GL(\X_{1})/(\T_{1}(\S_{1}) \times \A(\S_{1})) \times \G(\U_{1})/\H_{n-1}(\widetilde{\S}_{t-1}) = \G(\U_{1})/\H_{n-1}(\widetilde{\S}_{t-1})\,,
\end{equation*}
it follows from Equations \eqref{EquationCompactEqn}, \eqref{FormulaOnH2} and \eqref{ThetaPiNU(1,1)} that for every $\check{h} = \check{t}\check{a}\check{h}_{1} \in \check{\H}_{n, \S_{t}} = \T_{1, \S_{i}} \times \A_{\S_{i}} \times \H_{\widetilde{\S}_{t-1}}$, {\small
\begin{equation*}
m_{\S_{t}}\C_{n, 1} \Theta_{\Pi^{n}}(c(\S_{t})\check{p}(\check{h})c(\S_{t})^{-1}) |\Delta_{\G_{n}}(\check{h})|^{2} \cfrac{\D_{1}(c(\S_{i})\check{p}(\check{h})c(\S_{t})^{-1})}{\D(c(\S_{t})\check{p}(\check{h})c(\S_{t})^{-1})} 
\end{equation*}
\begin{equation*}
= \cfrac{\C_{n, 1} (-1)^{u}m_{\S_{t}}}{2(2n-1)!}\cfrac{\Delta_{\Psi_{n}}(\check{h}) \D_{1}(c(\S_{t})\check{p}(\check{h})c(\S_{t})^{-1})}{\D(c(\S_{t})\check{p}(\check{h})c(\S_{t})^{-1})} \sum\limits_{\sigma \in \mathscr{W}(\H^{\mathbb{C}}_{n})}  \varepsilon(\sigma) \Delta_{\Phi(\Z_{n})}(\sigma^{-1}(\check{h})) \lim\limits_{\underset{r \to 1}{r \in \E_{\sigma, \S_{t}}}} \displaystyle\int_{\check{\H}'} \cfrac{\overline{\Theta_{\Pi'_{m}}(\check{p}(\check{h}'_{0})) \Delta_{\Psi'}(\check{h}'_{0})}}{\det(1-p(\check{h}'_{0})rp(\check{h}))_{\sigma\W^{\mathfrak{h}}}} d\check{h}'_{0}
\end{equation*}
\begin{equation*}
+ \cfrac{m_{\widetilde{\S}_{t-1}}\C_{1}}{2} \cfrac{\overline{\Theta_{\Pi'_{m}}(c(\S_{1})\check{p}(\check{t}\check{a})c(\S_{1})^{-1})|\Delta_{\Psi'}(\check{t}\check{a})|^{2}}\d_{\S'_{1}}(c(\S_{1})\check{p}(\check{t}\check{a})c(\S_{1})^{-1})\d'_{\S_{1}}(c(\S_{t})\check{p}(\check{t}\check{a}\check{h}_{1})c(\S_{t})^{-1})\varepsilon(\widetilde{(-1)}c(\widetilde{\S}_{t-1})\check{p}(\check{h}_{1})c(\widetilde{\S}_{t-1})))|\Delta_{\G(\U_{1})}(\check{h})|^{2}}{|\det(\Ad(c(\S_{1})\check{p}(\check{t}'\check{a}')c(\S_{1})^{-1}) - \Id)_{|_{\eta'(\S_{1})}}|}
\end{equation*}}
and the result follows.

\end{proof}

\begin{lemme}

For every $\tilde{g} \in \widetilde{\G_{n}}$, $\varepsilon(\tilde{g}) = \pm 1$. 

\label{Lemma0702}

\end{lemme}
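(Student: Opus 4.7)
The plan is to invoke the explicit description of the character $\varepsilon$ constructed in Appendix~\ref{AppendixEpsilon}. Since $\varepsilon$ is a group homomorphism from (a subgroup of) $\widetilde{\Sp}(\W)$ to $\mathbb{C}^{\times}$, showing $\varepsilon(\tilde{g}) \in \{\pm 1\}$ on $\widetilde{\G_{n}}$ is equivalent to showing that the restriction of $\varepsilon^{2}$ to $\widetilde{\G_{n}}$ is trivial. The strategy is to first verify the claim on the Cayley-admissible part and then transport it to all of $\widetilde{\G_{n}}$ using multiplicativity.

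The first step is to specialize the formula from Appendix~\ref{AppendixEpsilon} to $\tilde{g} = \tilde{c}(X) \in \widetilde{\G_{n}} \cap \widetilde{\Sp}^{c}(\W)$. In the framework of \cite{TOM2}, $\varepsilon(\tilde{c}(X))$ equals the sign of a real determinant built from the restriction of the symmetric form $\langle \J\cdot,\cdot\rangle$ to subspaces canonically associated with $X$, so it is manifestly $\pm 1$. Here one uses in an essential way that $\G_{n} = \U(n,n+1)$ acts on the complex vector space $\V_{n}$ by elements fixing a hermitian form, so that the relevant determinants are real.

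The second step is to extend from $\widetilde{\G_{n}}^{c} := \widetilde{\G_{n}} \cap \widetilde{\Sp}^{c}(\W)$ to all of $\widetilde{\G_{n}}$. By Remark~\ref{RemarkCayleyCovering}(1), $\widetilde{\Sp}(\W)$ decomposes as a finite disjoint union $\bigsqcup_{i} \tilde{g}_{i}\widetilde{\Sp}^{c}(\W)$, and intersecting with $\widetilde{\G_{n}}$ shows that every $\tilde{g} \in \widetilde{\G_{n}}$ can be written as a product $\tilde{g} = \tilde{h}_{1}\tilde{h}_{2}$ with $\tilde{h}_{1},\tilde{h}_{2} \in \widetilde{\G_{n}}^{c}$ (after absorbing the finitely many coset representatives into a fixed factorization). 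Applying Step~1 to each factor and using multiplicativity of $\varepsilon$ gives $\varepsilon(\tilde{g}) = \varepsilon(\tilde{h}_{1})\varepsilon(\tilde{h}_{2}) \in \{\pm 1\} \cdot \{\pm 1\} \subseteq \{\pm 1\}$.

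The main obstacle is executing the factorization in Step~2 cleanly: one has to make sure that the Cayley-admissible set $\widetilde{\G_{n}}^{c}$ indeed generates $\widetilde{\G_{n}}$ (which is guaranteed by the openness and density of $\Sp^{c}(\W)$ together with the connectedness of $\U(n,n+1)$) and that the product representation is compatible with the sign formula from Appendix~\ref{AppendixEpsilon}. This is precisely the auxiliary property of $\varepsilon$ established in the lemma singled out in Appendix~\ref{AppendixEpsilon}, which the author explicitly flags as being needed in the proof of Lemma~\ref{Lemma0702}. Once that property is in hand, the combination of the explicit sign formula and multiplicativity delivers the conclusion.
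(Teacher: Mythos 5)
Your proposal follows essentially the same route as the paper: use the formula for $\varepsilon$ from Appendix~\ref{AppendixEpsilon} on the Cayley-admissible subset $\widetilde{\G_n}\cap\widetilde{\Sp}^c(\W)$, and then extend by density of $\Sp^c(\W)$ and multiplicativity. Two points of caution, though. In Step~1 you describe $\varepsilon(\tilde{c}(X))$ as ``the sign of a real determinant built from $\langle\J\cdot,\cdot\rangle$,'' which would make the $\pm1$ conclusion automatic for \emph{all} of $\widetilde{\Z}^c$ — but the paper explicitly records that $\varepsilon$ takes values in $\{\pm1,\pm i\}$ in general. What Appendix~\ref{AppendixEpsilon} actually gives is $\varepsilon(\tilde{g})=\Theta(\tilde{g})/|\Theta(\tilde{g})|$, with $\Theta(\tilde{g})^2=\det(g_{|\X})^{-1}\det\bigl(\tfrac12(c(g_{|\X})+1)\bigr)^2$; the second factor is a positive real, so $\varepsilon(\tilde{g})=\pm1$ exactly when $\det_{\mathbb{R}}(g_{|\X})>0$. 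Unitarity enters precisely here: $\G_n$ preserves a complex structure on $\X=(\X'_1\otimes\V_n)_{\mathbb{R}}$, so $\det_{\mathbb{R}}(g_{|\X})=|\det_{\mathbb{C}}(g_{|\X})|^2=1$, forcing $\Theta(\tilde{g})\in\mathbb{R}^*$ and hence $\varepsilon(\tilde{g})=\pm1$. So the unitary hypothesis is doing real work rather than just ensuring ``the relevant determinants are real.'' In Step~2 the intersection-of-cosets argument is not quite the right mechanism; the clean statement is that $\widetilde{\G_n}^c$ is open and dense in $\widetilde{\G_n}$, so $\tilde{g}\,(\widetilde{\G_n}^c)^{-1}\cap\widetilde{\G_n}^c\neq\emptyset$ gives the factorization $\tilde{g}=\tilde{h}_1\tilde{h}_2$ directly — which you do gesture at. Note also that the paper's own proof stops at $\widetilde{\G}_n^c$ and leaves the extension implicit, so your Step~2 is a small but genuine completion of the argument.
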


\begin{proof}

The space $\X'_{1} \otimes _{\mathbb{C}} \V_{n} \oplus \Y'_{1} \otimes_{\mathbb{C}} \V_{n}$ is a complete polarization of $\V' \otimes_{\mathbb{C}} \V_{n}$. In particular, $\widetilde{\X'}=  (\X'_{1} \otimes \V_{n})_{\mathbb{R}}$ is a maximal isotropic subspace of $\W = (\V' \otimes_{\mathbb{C}} \V_{n})_{\mathbb{R}}$. 

\noindent According to Equation \ref{LemmaAppendixC1}, the character $\varepsilon$ is defined on $\GL(\widetilde{\X'})^{c}$ by the following formula:
\begin{equation*}
\varepsilon(\tilde{g}) = \cfrac{\Theta(\tilde{g})}{|\Theta(\tilde{g})|} \qquad \left(g \in \GL(\widetilde{\X'})^{c}\right)\,.
\end{equation*}
In particular, using Equation \ref{LemmaAppendixC2}, for every $\tilde{g} \in \widetilde{\G}^{c}_{n}$, we get:
\begin{equation*}
\varepsilon(\tilde{g}) = \cfrac{\det_{\widetilde{\X'}}(\tilde{g})^{-\frac{1}{2}}}{|\det_{\widetilde{\X'}}(\tilde{g})^{-\frac{1}{2}}|} = \pm \cfrac{|\det_{\X'}(g)|^{-1}}{|\det_{\X'}(g)|^{-1}}\,.
\end{equation*}
Using the fact that $|\det_{\X'}(g)| = 1$, it follows that $\varepsilon(\tilde{g}) = \pm1$. 

\end{proof}

\noindent As explained in Appendix \ref{AppendixCartanUnitary} (see Equation \eqref{DiagonalCartanSubgroupHS}), for every $0 \leq t \leq n$ and $\S_{t} = \{e_{1} - e_{2n+2-t}, \ldots, e_{t} - e_{2n+2-t}\}$, \begin{equation}
\H_{n, \S_{t}} = \left\{h = \diag(e^{iX_{1} - X_{2n+1}}, \ldots, e^{iX_{t} - X_{2n+2-t}}, e^{iX_{t+1}}, \ldots, e^{iX_{2n+1-t}}, e^{iX_{t} + X_{2n+2-t}}, \ldots, e^{iX_{1} + X_{2n+1}}), X_{j} \in \mathbb{R}\right\}\,.
\label{EquationH(S)}
\end{equation}
In particular, using Remark \ref{DecompositionTAH}, we get that $h$ can be written as $h = tah_{1}$, where 
\begin{equation*}
t = \diag(e^{iX_{1}}, \underbrace{1, \ldots, 1}_{2n-1}, e^{iX_{1}}), \qquad a = \diag(e^{-X_{2n+1}}, \underbrace{1, \ldots, 1}_{2n-1}, e^{X_{2n+1}})
\end{equation*}
and 
\begin{equation*}
h_{1} = \diag(1, e^{iX_{2} - X_{2n}}, \ldots, e^{iX_{t} - X_{2n+2-t}}, e^{iX_{t+1}}, \ldots, e^{iX_{2n+1-t}}, e^{iX_{t} + X_{2n+2-t}}, \ldots, e^{iX_{2} + X_{2n}}, 1)\,.
\end{equation*}

\noindent We get $\Phi(\Z_{n}) = \left\{e_{i} - e_{j}, 2 \leq i < j \leq 2n\right\}$. In particular, for every $\sigma \in \mathscr{S}_{2n+1}$ and $\check{h} \in \check{\H}_{n}(\S_{t})$, we get:
\begin{equation*}
\cfrac{\Delta_{\Phi(\Z_{n})}(\sigma^{-1}(\check{h}))}{\Delta_{\Phi(n)}(\check{h})} = \cfrac{\prod\limits_{2 \leq i < j \leq 2n} \left(h^{\frac{1}{2}}_{\sigma(j)}h^{-\frac{1}{2}}_{\sigma(i)} - h^{-\frac{1}{2}}_{\sigma(j)}h^{\frac{1}{2}}_{\sigma(i)}\right)}{\prod\limits_{1 \leq i < j \leq 2n+1} \left(h^{\frac{1}{2}}_{i}h^{-\frac{1}{2}}_{j} - h^{-\frac{1}{2}}_{i}h^{\frac{1}{2}}_{j}\right)} = \varepsilon(\sigma)\cfrac{h^{n}_{\sigma(1)}h^{n}_{\sigma(2n+1)} \prod\limits_{\underset{i \neq \sigma(1), \sigma(2n+1)}{i=1}}^{2n+1} h_{i}}{\prod\limits_{\underset{j \neq \sigma(1)}{j=1}}^{2n+1} \left(h_{\sigma(1)} - h_{j}\right) \prod\limits_{\underset{j \neq \sigma(1), \sigma(2n+1)}{j=1}}^{2n+1} \left(h_{\sigma(2n+1)} - h_{j}\right)}\,.
\end{equation*}
Up to a sign, the quotient $\cfrac{\Delta_{\Phi(\Z_{n})}(\sigma^{-1}(\check{h}))}{\Delta_{\Phi(n)}(\check{h})}$ is "uniquely" determined by $\sigma(1)$ and $\sigma(2n+1)$. For $\sigma \in \mathscr{S}_{2n+1}$ and $i, j \in \{1, \ldots, 2n+1\}$ such that $\sigma(1) = i, \sigma(2n+1) = j$, we denote by $\Delta(i, j, \check{h})$ the following quantity:
\begin{equation*}
\Delta(i, j, \check{h}) = \varepsilon(\sigma) \cfrac{\Delta_{\Phi(\Z_{n})}(\sigma^{-1}(\check{h}))}{\Delta_{\Phi(n)}(\check{h})} = \cfrac{h^{n}_{i}h^{n}_{j} \prod\limits_{\underset{k \neq i, j}{k=1}}^{2n+1} h_{k}}{\prod\limits_{\underset{k \neq i}{k=1}}^{2n+1} \left(h_{i} - h_{k}\right) \prod\limits_{\underset{l \neq i, j}{l=1}}^{2n+1} \left(h_{j} - h_{l}\right)}\,.
\end{equation*}

\begin{lemme}

Let $k \in \mathbb{Z}$ and $a \in \mathbb{C}^{*} \setminus \S^{1}$. Then,
\begin{equation*}
\cfrac{1}{2i\pi}\displaystyle\int_{\S^{1}} \cfrac{z^{k}}{z-a}dz = \begin{cases} a^{k} & \text{ if } k \geq 0 \text{ and } |a| < 1 \\ -a^{k} & \text{ if } k < 0 \text{ and } |a| > 1 \\ 0 & \text{ otherwise }
\end{cases}
\end{equation*}

\label{LemmaComplexIntegrals}

\end{lemme}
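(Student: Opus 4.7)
The plan is to compute the integral directly via the residue theorem applied to the meromorphic function $f(z) = z^{k}/(z-a)$, integrated over the unit circle $\S^{1}$ oriented counterclockwise. The singularities inside the closed unit disk depend both on the sign of $k$ (which controls whether $z = 0$ is a pole) and on whether $|a| < 1$ or $|a| > 1$ (which controls whether the pole at $z = a$ lies inside), so I would split the argument into four cases matching the four branches in the statement.

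When $k \geq 0$, the function $f$ has at most a simple pole at $z = a$, since $z^{k}$ is entire. If $|a| < 1$, then $\mathrm{Res}_{z=a} f = a^{k}$ and the integral equals $a^{k}$; if $|a| > 1$, then $f$ is holomorphic on a neighbourhood of the closed unit disk and the integral vanishes by Cauchy's theorem. When $k < 0$, there is in addition a pole of order $-k$ at the origin, and its residue is best computed by expanding $\frac{1}{z-a} = -\frac{1}{a}\sum_{n \geq 0}(z/a)^{n}$ (convergent for $|z| < |a|$) and reading off the coefficient of $z^{-1}$ in $z^{k}/(z-a)$, which equals $-a^{k}$ (corresponding to $n = -1-k \geq 0$). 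For $|a| > 1$ only this residue contributes, giving $-a^{k}$; for $|a| < 1$ the residue at $z = a$ is still $a^{k}$ and cancels the residue $-a^{k}$ at $0$ (the same expansion is valid in a small disk around the origin when $|a| < 1$), yielding $0$.

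No real obstacle is expected: this is a standard contour integration, and the only book-keeping is to ensure that the Laurent expansion of $\frac{1}{z-a}$ around $0$ is invoked in its region of convergence, which is immediate once the two regimes of $|a|$ are separated.
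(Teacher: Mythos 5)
Your residue-theorem argument is correct in all four cases, including the cancellation between the residue $a^{k}$ at $z=a$ and the residue $-a^{k}$ at $z=0$ when $k<0$ and $|a|<1$. The paper states this lemma without proof, treating it as a standard complex-analysis fact, so there is no argument in the paper to compare against; yours is the expected one.
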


\noindent We denote by $\C_{2}$ the constant $\frac{(-1)^{u}}{(p+q-2)!}$.

\begin{nota}

For an element $h \in \H^{\reg}_{n, \S_{t}}$ as in Equation \eqref{EquationH(S)}, we denote by $\J(h)$ and $\K(h)$ the subsets of $\{1, \ldots, t\}$ given by
\begin{equation*}
J(h) = \{j \in \{1, \ldots, t\}, \sgn(X_{2n+2-j}) = 1\}\,, \qquad K(h) = \{j \in \{1, \ldots, t\}, \sgn(X_{2n+2-j}) = -1\}\,,
\end{equation*}
where $\sgn(X)$ is defined for every $X \in \mathbb{R}^{*}$ by
\begin{equation*}
\sgn(X) = \begin{cases} 1 & \text{ if } X > 0 \\ -1 & \text{ if } X < 0 \end{cases}\,.
\end{equation*}
To simplify the notations, we will denote by $(h_{1}, \ldots, h_{2n+1})$ the components of $h$.

\noindent Finally, we denote by $\A_{t}$ and $\B_{t}$ the subsets of $\left\{1, \ldots, 2n+1\right\}$ given by
\begin{equation*}
\A_{t} = \left\{t+1, \ldots, n\right\}, \qquad \B_{t} = \left\{n+1, \ldots, 2n+1-t\right\}\,.
\end{equation*}

\label{NotationsSection6}

\end{nota}

\begin{theo}

For every $0 \leq t \leq n$, the value of $\Theta_{\Pi^{n}}$ on $\widetilde{\H}_{n}(\S_{t})$ is given by 
\begin{equation}
\Theta_{\Pi^{n}}(c(\S_{t})\check{p}(\check{h})c(\S_{t})^{-1}) = \pm\C \begin{cases}
-\widetilde{\A} \sum\limits_{\underset{i \in J(h) \cup \B_{t}}{j \in K(h) \cup \A_{t}}} h^{n}_{i}h^{n+m}_{j} \Omega_{i, j}(h) + \delta_{t, 0}\B e^{-(m+1)\sgn(X_{2n+1})X_{2n+1}}\Sigma(h) & \text{ if } m \geq 1 \\ \widetilde{\A} \sum\limits_{\underset{i \neq j}{i, j \in J(h) \cup \B_{t}}} h^{n}_{i}h^{n}_{j} \Omega_{i, j}(h) + \delta_{t, 0}\B e^{-(m+1)\sgn(X_{2n+1})X_{2n+1}}\Sigma(h) & \text{ if } m = 0 \\ \widetilde{\A}\sum\limits_{\underset{j \in J(h) \cup \B_{t}}{i \in K(h) \cup \A_{t}}} h^{n+m}_{i}h^{n}_{j} \Omega_{i, j}(h) + \delta_{t, 0}\B e^{(m-1)\sgn(X_{2n+1})X_{2n+1}}\Sigma(h)& \text{ if } m \leq -1
\end{cases}
\label{FINALEQUATION1908}
\end{equation}
where $\Omega_{i, j}, 1 \leq i \neq j \leq 2n+1$, and $\Sigma$ are the functions on $\H^{\reg}_{n, \S_{t}}$ are given by
\begin{equation*}
\Omega_{i, j}(h) = \cfrac{\prod\limits_{\underset{k \neq i, j}{k=1}}^{2n+1} h_{k}}{\prod\limits_{\underset{k \neq i}{k=1}}^{2n+1} (h_{i} - h_{k}) \prod\limits_{\underset{l \neq i, j}{l=1}}^{2n+1} (h_{j} - h_{l})}\,, \qquad \Sigma(h) = \cfrac{\sgn(X_{2n+1}) e^{imX_{1}}\left|e^{(2n-2)X_{2n+1}}\right|\left(1 - e^{-2X_{2n+1}}\right)}{\left|\prod\limits_{k=2}^{2n} \left(1 - h_{1}h^{-1}_{k}\right) \prod\limits_{k=2}^{2n} \left(1- h_{k}h^{-1}_{2n+1}\right)\right|\left|1 - e^{-2X_{2n+1}}\right|^{2}}\,,
\end{equation*}
and where $\check{h} \in \check{\H}_{n, \S_{t}}$ is as in Equation \eqref{EquationH(S)}, $\widetilde{\A} = \cfrac{\A}{(2n-1)!}$ and $\C$ is a constant.

\label{PropositionFinalFormulaThetaPin}
\end{theo}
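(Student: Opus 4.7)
The plan is to start from the integral representation of $\Theta_{\Pi^{n}}(c(\S_{t})\check{p}(\check{h})c(\S_{t})^{-1})$ furnished by Theorem~\ref{IntegralFormulaCharacter} and then evaluate, term by term, the two contributions: the sum over the Weyl group $\mathscr{W}(\H^{\mathbb{C}}_{n})$ coming from the compact Cartan $\widetilde{\H'}$ of $\widetilde{\U}(1,1)$, and the boundary contribution coming from the split Cartan $\widetilde{\H'}(\S'_{1})$. For both contributions we plug in the explicit values of $\Theta_{\Pi'_{m}}$ that were computed on every Cartan of $\widetilde{\U}(1,1)$ in Appendix~\ref{ComputationsU(1)}, and carry out the remaining one--dimensional integrals using Lemma~\ref{LemmaComplexIntegrals}.

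For the first (compact) term, the crucial observation is the description $\W^{\mathfrak{h}} = \sum_{i=1}^{2n+1} \Hom(\V'_{i}, \V_{i})$ given in the Remark after Theorem~\ref{TheoremTheta}. Hence for any $\sigma \in \mathscr{W}(\H^{\mathbb{C}}_{n})$ the factor $\det(1 - p(\check{h}') r p(\check{h}))_{\sigma \W^{\mathfrak{h}}}$ is a product of two elementary linear factors in the single variable $\check{h}'$, determined by $\sigma(1)$ and $\sigma(2n+1)$. Coupling this with the formula for $\overline{\Theta_{\Pi'_{m}}(\check{p}(\check{h}'))\Delta_{\Psi'}(\check{h}')}$ from Appendix~\ref{ComputationsU(1)}, the integral over $\check{\H}'$ reduces to a contour integral of the form $\frac{1}{2i\pi}\int_{\S^{1}} z^{k}/(z-a)\,dz$, to which Lemma~\ref{LemmaComplexIntegrals} applies. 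The limit $r \to 1$ from the cone $\E_{\sigma, \S_{t}}$ selects the correct side ($|a|<1$ versus $|a|>1$) of the pole, so the outcome depends on the sign of the exponent, which in turn is governed by $m$ and by the signs $\sgn(X_{2n+2-j})$ encoded in the sets $J(h)$ and $K(h)$; this is precisely what produces the trichotomy $m\geq 1$, $m=0$, $m\leq -1$ in \eqref{FINALEQUATION1908}, as well as the restriction of the summation to indices lying in $J(h)\cup \B_{t}$ or $K(h)\cup \A_{t}$. Using the factorisation
\begin{equation*}
\varepsilon(\sigma)\,\frac{\Delta_{\Phi(\Z_{n})}(\sigma^{-1}(\check{h}))}{\Delta_{\Phi_{n}}(\check{h})} = \Delta(i,j,\check{h}) = h_{i}^{n}h_{j}^{n}\Omega_{i,j}(h)
\end{equation*}
(already isolated just before Lemma~\ref{LemmaComplexIntegrals}) and regrouping the Weyl sum according to the pair $(i,j)=(\sigma(1),\sigma(2n+1))$, the resulting quantity is exactly $\widetilde{\A}$ times the sum $\sum h_{i}^{a}h_{j}^{b}\Omega_{i,j}(h)$ with exponents depending on $m$, with $\widetilde{\A}=\A/(2n-1)!$ absorbing the combinatorial constant.

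For the second (split Cartan) contribution, valid only when $t\geq 1$ (i.e.\ when $\H_{n}(\S_{t})$ actually contains the one--dimensional split torus $\A(\S_{1})$ of $\H'(\S'_{1})$), we substitute the explicit formula for $\Theta_{\Pi'_{m}}(c(\S_{1})\check{p}(\check{t}\check{a})c(\S_{1})^{-1})$ from Appendix~\ref{ComputationsU(1)}: this provides the exponential factor $e^{imX_{1}}$ and the $\sgn(X_{2n+1})$ together with an $e^{\pm (m\mp 1)\sgn(X_{2n+1})X_{2n+1}}$ depending on the sign of $m$. The ratios of the $\d$- and $\D$-factors in Theorem~\ref{IntegralFormulaCharacter} collapse, thanks to Lemma~\ref{LemmaGL} (which identifies $\GL(\X'_{1})$ with $\H'(\S'_{1})$) and an explicit computation in the coordinates of Equation~\eqref{EquationH(S)}, to the denominator appearing in $\Sigma(h)$. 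The character $\varepsilon$ appearing in the Harish--Chandra transform is treated via Lemma~\ref{Lemma0702}, which only contributes an overall $\pm 1$; combined with the ratio of the $\d$ and $\D$ factors (which, as stated in Lemma~\ref{Lemma1502}, has constant sign on every Weyl chamber after taking absolute values), this produces the prefactor $\pm\C$ in front of the whole formula and the constant $\B$ of the statement.

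The main obstacle I anticipate is the bookkeeping in the Weyl sum: one has to check that the limit $r\to 1^{-}$ taken along $\E_{\sigma, \S_{t}}$ is compatible with grouping the $\sigma$'s by the pair $(\sigma(1),\sigma(2n+1))$, because the non--contributing $\sigma$'s (those for which the residue vanishes in Lemma~\ref{LemmaComplexIntegrals}) are precisely the ones whose $\sigma(1)$ or $\sigma(2n+1)$ falls outside $J(h)\cup \B_{t}$ or $K(h)\cup \A_{t}$. The correct identification of the index sets requires a careful examination of the cone $\E_{\sigma, \S_{t}}$ against the decomposition $\check{h} = \check{t}\check{a}\check{h}_{1}$ of Remark~\ref{DecompositionTAH}: the signs $\sgn(X_{2n+2-j})$, for $j=1,\ldots,t$, determine whether the corresponding eigenvalue $h_{j}$ (or $h_{2n+2-j}$) satisfies $|h|<1$ or $|h|>1$, and hence into which of the two families it falls. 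Once this dictionary between the cones $\E_{\sigma,\S_{t}}$ and the partition $(J(h),K(h))\sqcup(\A_{t},\B_{t})$ is established, the rest is routine bookkeeping and the theorem follows.
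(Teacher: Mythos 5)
Your proposal follows essentially the same route as the paper's proof: start from the integral representation of Theorem~\ref{IntegralFormulaCharacter}, substitute the explicit values of $\Theta_{\Pi'_{m}}$ from Appendix~\ref{ComputationsU(1)}, group the Weyl sum by the pair $(\sigma(1),\sigma(2n+1))$ via the $\Delta(i,j,\check{h})$ factorisation, reduce to contour integrals evaluated with Lemma~\ref{LemmaComplexIntegrals} where the cone $\E_{\sigma,\S_{t}}$ selects the residue, and separately compute the $\d$- and $\D$-ratios for the split Cartan term. One small slip to note: the fixed space is $\W^{\mathfrak{h}'}$ (two-dimensional, indexed by the rank of the \emph{small} group $\U(1,1)$, namely $\{w_{1,1}\E_{1,1}+w_{2n+1,2}\E_{2n+1,2}\}$), not a sum over $2n+1$ indices as you wrote — but your subsequent observation that the determinant factor splits into exactly two linear factors is correct and is what the argument actually uses.
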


\begin{proof}

Let $h = tah_{1} \in \H_{n, \S_{t}}$. We denote by $(h_{1}, \ldots, h_{2n+1})$ be the components of $h$. In particular, 
\begin{equation*}
h_{c} = \begin{cases} e^{iX_{c} - X_{2n+2-c}} & \text{ if } 1 \leq c \leq t \\ e^{iX_{c}} & \text{ if } t+1 \leq c \leq 2n+1-t \\ e^{iX_{2n+2-c}+ X_{c}} & \text{ if } 2n+2-t \leq c \leq 2n+1 \end{cases}\,.
\end{equation*}
We denote by $\Delta_{n}(\mathfrak{l}) := \Delta_{n}(\mathfrak{l}_{n}(\S_{1}))$ the set of roots of $\mathfrak{l}_{n}(\S_{1})$ and let $\Psi_{n}(\mathfrak{l}) := \Delta_{n}(\mathfrak{l}) \cap \Psi_{n}$. One can easily check that
\begin{equation*}
\Psi_{n}(\mathfrak{l}) = \left\{e_{i} - e_{j}, 2 \leq i < j \leq 2n\right\}\,.
\end{equation*}
Similarly, let $\Psi_{n}(\eta_{n}(\S_{1})) = \left\{e_{1} - e_{k}, 2 \leq k \leq 2n+1\right\} \cup \left\{e_{k} - e_{2n+1}, 2 \leq k \leq 2n\right\}$ the roots of $\eta_{n}(\S_{1})$. Then,
\begin{eqnarray*}
\cfrac{\D(c(\S_{t})\check{p}(\check{h})c(\S_{t})^{-1})}{\D_{1}(c(\S_{t})\check{p}(\check{h})c(\S_{t})^{-1})} & = & \cfrac{\left|\det(\Id - \Ad(c(\S_{t})\check{p}(\check{h})c(\S_{t})^{-1})^{-1})_{\mathfrak{g}_{n}/\mathfrak{h}_{n}(\S_{t})}\right|^{\frac{1}{2}}}{\left|\det(\Id - \Ad(c(\S_{t})\check{p}(\check{h})c(\S_{t})^{-1})^{-1})_{\mathfrak{l}_{n}(\S_{1})/\mathfrak{h}_{n}(\S_{t})}\right|^{\frac{1}{2}}} = \d_{\S_{1}}(c(\S_{t})\check{p}(\check{h})c(\S_{t})^{-1})^{-2}\cfrac{\prod\limits_{\alpha \in \Psi_{n}(\mathfrak{l})} \left|1 - \check{h}^{\alpha}\right|}{\prod\limits_{\alpha \in \Psi_{n}} \left|1 - \check{h}^{\alpha}\right|} \\
& = & \d_{\S_{1}}(c(\S_{t})\check{p}(\check{h})c(\S_{t})^{-1})^{-2}\prod\limits_{\alpha \in \Psi_{n}(\eta_{n}(\S_{1}))} \left|1 - \check{h}^{\alpha}\right| \\
& = & \d_{\S_{1}}(c(\S_{t})\check{p}(\check{h})c(\S_{t})^{-1})^{-2} \left|1 - h_{1}h^{-1}_{2n+1}\right| \prod\limits_{k=2}^{2n} \left|1 - h_{1}h^{-1}_{k}\right| \prod\limits_{k=2}^{2n} \left|1- h_{k}h^{-1}_{2n+1}\right|
\end{eqnarray*}

\noindent Moreover, 
\begin{equation*}
\cfrac{\left|\Delta_{\G}(\check{h})\right|^{2}}{\left|\Delta_{\G(\U_{1})}(\check{h})\right|^{2}} = \cfrac{\prod\limits_{\alpha \in \Psi_{n}}\left|1-\check{h}^{\alpha}\right|^{2}}{\prod\limits_{\alpha \in \Psi(\mathfrak{g}(\U_{1}))}\left|1-\check{h}^{\alpha}\right|^{2}} =  \left|1-h_{1}h^{-1}_{2n+1}\right|^{2} \prod\limits_{k=2}^{2n} \left|1-h_{1}h^{-1}_{k}\right|^{2} \prod\limits_{j=2}^{2n} \left|1-h_{k}h^{-1}_{2n+1}\right|^{2} 
\end{equation*}
In particular,
\begin{equation*}
\cfrac{\D(c(\S_{t})\check{p}(\check{h})c(\S_{t})^{-1})\left|\Delta_{\G(\U_{1})}(\check{h})\right|^{2}}{\D_{1}(c(\S_{t})\check{p}(\check{h})c(\S_{t})^{-1})\left|\Delta_{\G}(\check{h})\right|^{2}} =\d_{\S_{1}}(c(\S_{t})\check{p}(\check{h})c(\S_{t})^{-1})^{-2}\left|1-e^{-2X_{2n+1}}\right|^{-1}\left|\prod\limits_{k=2}^{2n} \left(1 - h_{1}h^{-1}_{k}\right) \prod\limits_{k=2}^{2n} \left(1- h_{k}h^{-1}_{2n+1}\right)\right|^{-1}.
\end{equation*}
Similarly, 
\begin{equation*}
\left|\Delta_{\Psi'}(\check{t}\check{a})\right|^{2} = \left|1 - h_{1}h^{-1}_{2n+1}\right|^{2} = \left|1-e^{-2X_{2n+1}}\right|^{2}
\end{equation*}
and it follows from Remark \ref{LastRemarkU(1,1)} that 
\begin{equation*}
\Theta_{\Pi'_{m}}(c(\S_{1})\check{p}(\check{t}\check{a})c(\S_{1})^{-1}) = \pm \begin{cases} \sgn(X_{2n+1})\cfrac{e^{-ikX_{1}}e^{k\sgn(X_{2n+1})X_{2n+1}}}{e^{X_{2n+1}} - e^{-X_{2n+1}}} & \text{ if } m \leq -1 \\ \sgn(X_{2n+1}) \cfrac{e^{-ikX_{1}}e^{-k\sgn(X_{2n+1})X_{2n+1}}}{e^{X_{2n+1}} - e^{-X_{2n+1}}} & \text{ if } m \geq 0 \end{cases}\,,
\end{equation*}
i.e.
\begin{equation*}
\left|\Delta_{\Psi'}(\check{t}\check{a})\right|^{2} \overline{\Theta_{\Pi'_{m}}(c(\S_{1})\check{p}(\check{t}\check{a})c(\S_{1})^{-1})} = \pm \begin{cases} \sgn(X_{2n+1})e^{ikX_{1}}e^{(k-1)\sgn(X_{2n+1})X_{2n+1}}(1-e^{-2X_{2n+1}}) & \text{ if } m \leq -1 \\ \sgn(X_{2n+1}) e^{ikX_{1}}e^{-(k+1)\sgn(X_{2n+1})X_{2n+1}}(1-e^{-2X_{2n+1}}) & \text{ if } m \geq 0 \end{cases},
\end{equation*}
Finally, using that 
\begin{equation*}
\left|\det(\Ad(c(\S_{1})\check{p}(\check{t}'\check{a}')c(\S_{1})^{-1})^{-1} - 1)_{|_{\eta'(\S_{1})}}\right| = |1 - e^{-2X_{2n+1}}|, \qquad \d_{\S'_{1}}(c(\S_{1})\check{p}(\check{t}\check{a})c(\S_{1})^{-1}) = \left|e^{-X_{2n+1}}\right|
\end{equation*}
and
\begin{equation*}
\d'_{\S_{1}}(c(\S_{t})\check{p}(\check{h})c(\S_{t})^{-1}) = \left|\prod\limits_{j = 2}^{2n+1} \left(e^{iX_{1}-X_{2n+1}}h^{-1}_{j}\right) \prod\limits_{j = 2}^{2n} \left(h_{j}e^{-iX_{1}-X_{2n+1}}\right)\right|^{\frac{1}{2}} = \left|e^{-(2n-1)X_{2n+1}}\right|\,,
\end{equation*}
we get the second members of Equation \eqref{FINALEQUATION1908}.

\bigskip 

\noindent We now look at the first member of Equation \eqref{FINALEQUATION1908}. One can easily check that for every $\sigma \in \mathscr{S}_{2n+1}$,  $w = w_{1, 1}\E_{1, 1} + w_{2n+1, 2}\E_{2n+1, 2} \in \W^{\mathfrak{h}'}$ and $y \in \mathfrak{h}_{n}$ such that $y = (y_{1}, \ldots, y_{2n+1}) = (iX_{1}, \ldots, iX_{2n+1})$, we get:
\begin{equation*}
\langle y\sigma(w), \sigma(w)\rangle = \begin{cases} X_{\sigma(1)} |w_{1, 1}|^{2} + X_{\sigma(2n+1)} |w_{2n+1, 2}|^{2} & \text{ if } \sigma(1), \sigma(2n+1) \in \{1, \ldots, n\} \\ - X_{\sigma(1)} |w_{1, 1}|^{2} - X_{\sigma(2n+1)} |w_{2n+1, 2}|^{2} & \text{ if } \sigma(1), \sigma(2n+1) \in \left\{n+1, \ldots, 2n+1\right\} \\ X_{\sigma(1)} |w_{1, 1}|^{2} - X_{\sigma(2n+1)} |w_{2n+1, 2}|^{2} & \text{ if } \sigma(1) \in \left\{1, \ldots, n\right\} \text{ and } \sigma(2n+1) \in \left\{n+1, \ldots, 2n+1\right\} \\ - X_{\sigma(1)} |w_{1, 1}|^{2} + X_{\sigma(2n+1)} |w_{2n+1, 2}|^{2} & \text{ if } \sigma(1) \in \left\{n+1, \ldots, 2n+1\right\} \text{ and } \sigma(2n+1) \in \left\{1, \ldots, n\right\} \end{cases}
\end{equation*}
(see the proof of Proposition \ref{PropositionA3} for en easier computation) and then

{\small
\begin{equation*}
\Gamma_{\sigma, \S_{t}} = \begin{cases} \mathfrak{h}_{n} & \text{ if } \left\{\sigma(1), \sigma(2n+1)\right\} \subseteq \underline{\S_{t}} \\ \left\{y = (y_{1}, \ldots, y_{2n+1}) \in \mathfrak{h}_{n}, X_{\sigma(2n+1)} > 0\right\} & \text{ if } \sigma(1) \in \underline{\S_{t}} \text{ and } \sigma(2n+1) \in \A_{t} \\ \left\{y = (y_{1}, \ldots, y_{2n+1}) \in \mathfrak{h}_{n}, X_{\sigma(2n+1)} < 0\right\} &\text{ if } \sigma(1) \in \underline{\S_{t}} \text{ and } \sigma(2n+1) \in \B_{t} \\ \left\{y = (y_{1}, \ldots, y_{2n+1}) \in \mathfrak{h}_{n}, X_{\sigma(1)} > 0\right\} &\text{ if } \sigma(2n+1) \in \underline{\S_{t}} \text{ and } \sigma(1) \in \A_{t} \\ \left\{y = (y_{1}, \ldots, y_{2n+1}) \in \mathfrak{h}_{n}, X_{\sigma(1)} < 0\right\} & \text{ if } \sigma(2n+1) \in \underline{\S_{t}} \text{ and } \sigma(1) \in \B_{t} \\ \left\{y = (y_{1}, \ldots, y_{2n+1}) \in \mathfrak{h}_{n}, X_{\sigma(1)} > 0, X_{\sigma(2n+1)} > 0\right\} & \text{ if } \left\{\sigma(1), \sigma(2n+1)\right\} \cap \underline{\S_{t}} = \left\{\emptyset\right\} \text{ and } \sigma(1), \sigma(2n+1) \in \A_{t} \\ \left\{y = (y_{1}, \ldots, y_{2n+1}) \in \mathfrak{h}_{n}, X_{\sigma(1)} > 0, X_{\sigma(2n+1)} < 0\right\} & \text{ if } \left\{\sigma(1), \sigma(2n+1)\right\} \cap \underline{\S_{t}} = \left\{\emptyset\right\} \text{ and } \sigma(1) \in \A_{t}, \sigma(2n+1) \in \B_{t} \\ \left\{y = (y_{1}, \ldots, y_{2n+1}) \in \mathfrak{h}_{n}, X_{\sigma(1)} < 0, X_{\sigma(2n+1)} > 0\right\} & \text{ if } \left\{\sigma(1), \sigma(2n+1)\right\} \cap \underline{\S_{t}} = \left\{\emptyset\right\} \text{ and } \sigma(2n+1) \in \A_{t}, \sigma(1) \in \B_{t} \\ \left\{y = (y_{1}, \ldots, y_{2n+1}) \in \mathfrak{h}_{n}, X_{\sigma(1)} < 0, X_{\sigma(2n+1)} < 0\right\} & \text{ if } \left\{\sigma(1), \sigma(2n+1)\right\} \cap \underline{\S_{t}} = \left\{\emptyset\right\} \text{ and } \sigma(1), \sigma(2n+1) \in \B_{t}\end{cases}
\end{equation*}}

\noindent Let $\check{h} \in \check{\H}_{n}(\S_{t})$ and $h = p(\check{h}) = (h_{1}, \ldots, h_{2n+1})$ as in Notations \ref{NotationsSection6}. Assume that $m \geq 1$. Using Corollary \ref{FinalCorollaryU(1)}, we get that 
\begin{equation*}
\Theta_{\Pi'_{k}}(\check{p}(\check{h}'_{0})) \Delta(\check{p}(\check{h}'_{0})) = h'^{\frac{1}{2}}_{1} h'^{\frac{1}{2}}_{2} \cfrac{h'^{-k}_{2}}{h'_{2} - h'_{1}} h'^{-\frac{1}{2}}_{1} h'^{-\frac{1}{2}}_{2}(h'_{1} - h'_{2}) = -h'^{-k}_{2}\,, \qquad (h'_{0} = \diag(h'_{1}, h'_{2}))\,.
\end{equation*} 
Then,
\begin{eqnarray*}
& & \sum\limits_{\sigma \in \mathscr{W}(\H^{\mathbb{C}}_{n})}  \varepsilon(\sigma) \cfrac{\Delta_{\Phi(\Z_{n})}(\sigma(\check{h}))}{\Delta_{\Phi(n)}(\check{h})} \lim\limits_{\underset{r \to 1}{r \in \E_{\sigma, \S_{t}}}} \displaystyle\int_{\check{\H}'} \cfrac{\overline{\Theta_{\Pi'_{m}}(\check{p}(\check{h}'_{0}))} \overline{\Delta(\check{p}(\check{h}'_{0}))}}{\det(1-p(\check{h}'_{0})p(\check{h}))_{\sigma\W^{\mathfrak{h}}}} d\check{h}'_{0} \\
& = & \frac{1}{(2i\pi)^{2}} \sum\limits_{\sigma \in \mathscr{W}(\H^{\mathbb{C}}_{n})} \varepsilon(\sigma) \cfrac{\Delta_{\Phi(\Z_{n})}(\sigma(\check{h}))}{\Delta_{\Phi(n)}(\check{h})} \lim\limits_{\underset{r \to 1}{r \in \E_{\sigma, \S_{t}}}} \displaystyle\int_{\H'_{1}} \cfrac{h'^{-1}_{1}h'^{m-1}_{2}}{(1 - h'_{1}(rh)^{-1}_{\sigma(1)})(1 - h'_{2}(rh)^{-1}_{\sigma(2n+1)})} dh'_{1}dh'_{2} \\
& = & \frac{1}{(2i\pi)^{2}} \sum\limits_{\sigma \in \mathscr{S}_{2n+1}} \varepsilon(\sigma) \cfrac{h_{\sigma(1)} h_{\sigma(2n+1)} \Delta_{\Phi(\Z_{n})}(\sigma(\check{h}))}{\Delta_{\Phi(n)}(\check{h})} \lim\limits_{\underset{r \to 1}{r \in \E_{\sigma, \S_{t}}}}  \displaystyle\int_{\S^{1}} \cfrac{h'^{-1}_{1}dh'_{1}}{h'_{1} - rh_{\sigma(1)}}\displaystyle\int_{\S^{1}} \cfrac{h'^{m-1}_{2}dh'_{2}}{h'_{2} - rh_{\sigma(2n+1)}} \\
& = & \frac{1}{(2i\pi)^{2}(2n-1)!} \sum\limits_{1 \leq i \neq j \leq 2n+1} h_{i} h_{j} \Delta(i, j, \check{h}) \lim\limits_{\underset{r \to 1}{r \in \E_{\sigma, \S_{t}}}}  \displaystyle\int_{\S^{1}} \cfrac{h'^{-1}_{1}dh'_{1}}{h'_{1} - rh_{i}}\displaystyle\int_{\S^{1}} \cfrac{h'^{m-1}_{2}dh'_{2}}{h'_{2} - rh_{j}} \\
& = & - \frac{1}{(2n-1)!} \sum\limits_{i \in J(h) \cup \B_{t}} \sum\limits_{j \in K(h) \cup \A_{t}} h_{i}h_{j}\Delta(i, j, \check{h}) h^{-1}_{i}h^{m-1}_{j} \\
& = & - \frac{1}{(2n-1)!} \sum\limits_{i \in J(h) \cup \B_{t}} \sum\limits_{j \in K(h) \cup \A_{t}} \cfrac{h^{n}_{i}h^{n+m}_{j} \prod\limits_{\underset{k \neq i, j}{k=1}}^{2n+1} h_{k}}{\prod\limits_{\underset{k \neq i}{k=1}}^{2n+1} (h_{i} - h_{k}) \prod\limits_{\underset{l \neq i, j}{l=1}}^{2n+1} (h_{j} - h_{l})}
\end{eqnarray*}
Similarly, if $m = 0$, we get:
\begin{eqnarray*}
& & \sum\limits_{\sigma \in \mathscr{W}(\H^{\mathbb{C}}_{n})}  \varepsilon(\sigma) \cfrac{\Delta_{\Phi(\Z_{n})}(\sigma(\check{h}))}{\Delta_{\Phi(n)}(\check{h})} \lim\limits_{\underset{r \to 1}{r \in \E_{\sigma, \S_{t}}}} \displaystyle\int_{\check{\H}'} \cfrac{\overline{\Theta_{\Pi'_{m}}(\check{p}(\check{h}'_{0}))} \overline{\Delta(\check{p}(\check{h}'_{0}))}}{\det(1-p(\check{h}'_{0})p(\check{h}))_{\sigma\W^{\mathfrak{h}}}} d\check{h}'_{1} \\
& = & \frac{1}{(2i\pi)^{2}(2n-1)!} \sum\limits_{1 \leq i \neq j \leq 2n+1} h_{i} h_{j} \Delta(i, j, \check{h}) \lim\limits_{\underset{r \to 1}{r \in \E_{\sigma, \S_{t}}}}  \displaystyle\int_{\S^{1}} \cfrac{h'^{-1}_{1}dh'_{1}}{h'_{1} - rh_{i}}\displaystyle\int_{\S^{1}} \cfrac{h'^{-1}_{2}dh'_{2}}{h'_{2} - rh_{j}} \\
& = & \frac{1}{(2n-1)!} \sum\limits_{\underset{i \neq j}{i, j \in J(h) \cup \B_{t}}} h_{i}h_{j}\Delta(i, j, \check{h}) h^{-1}_{i}h^{-1}_{j} = \frac{1}{(2n-1)!}\sum\limits_{\underset{i \neq j}{i, j \in J(h) \cup \B_{t}}} \cfrac{h^{n}_{i}h^{n}_{j} \prod\limits_{\underset{k \neq i, j}{k=1}}^{2n+1} h_{k}}{\prod\limits_{\underset{k \neq i}{k=1}}^{2n+1} (h_{i} - h_{k}) \prod\limits_{\underset{l \neq i, j}{l=1}}^{2n+1} (h_{j} - h_{l})}
\end{eqnarray*}
Finally, if $m \leq -1$, it follows from Corollary \ref{FinalCorollaryU(1)} that:
\begin{equation*}
\Theta_{\Pi'_{m}}(\check{p}(\check{h}'_{0})) \Delta(\check{p}(\check{h}'_{0})) = h'^{\frac{1}{2}}_{1} h'^{\frac{1}{2}}_{2} \cfrac{h'^{-m}_{1}}{h'_{2} - h'_{1}} h'^{-\frac{1}{2}}_{1} h'^{-\frac{1}{2}}_{2}(h'_{1} - h'_{2}) = -h'^{-m}_{1}\,, \qquad \left(h'_{0} = (h'_{1}, h'_{2})\right)\,.
\end{equation*} 
Then,
\begin{eqnarray*}
& & \sum\limits_{\sigma \in \mathscr{W}(\H^{\mathbb{C}}_{n})}  \varepsilon(\sigma) \cfrac{\Delta_{\Phi(\Z_{n})}(\sigma(\check{h}))}{\Delta_{\Phi(n)}(\check{h})} \lim\limits_{\underset{r \to 1}{r \in \E_{\sigma, \S_{t}}}} \displaystyle\int_{\check{\H}'} \cfrac{\overline{\Theta_{\Pi'_{m}}(\check{p}(\check{h}'_{0}))} \overline{\Delta(\check{p}(\check{h}'_{0}))}}{\det(1-p(\check{h}'_{0})p(\check{h}))_{\sigma\W^{\mathfrak{h}}}} d\check{h}'_{0} \\
& = & \cfrac{1}{(2i\pi)^{2}} \sum\limits_{\sigma \in \mathscr{W}(\H^{\mathbb{C}}_{n})} \varepsilon(\sigma) \cfrac{\Delta_{\Phi(\Z(n))}(\sigma(\check{h}))}{\Delta_{\Phi(n)}(\check{h})} \lim\limits_{\underset{r \to 1}{r \in \E_{\sigma, \S_{t}}}} \displaystyle\int_{\H'_{1}} \cfrac{h'^{m-1}_{1}h'^{1}_{2}}{(1 - h'_{1}(rh)^{-1}_{\sigma(1)})(1 - h'_{2}(rh)^{-1}_{\sigma(2n+1)})} dh'_{1}dh'_{2} \\
& = & \cfrac{1}{(2i\pi)^{2}} \sum\limits_{\sigma \in \mathscr{S}_{2n+1}} \varepsilon(\sigma) \cfrac{h_{\sigma(1)} h_{\sigma(2n+1)} \Delta_{\Phi(\Z(n))}(\sigma(\check{h}))}{\Delta_{\Phi(n)}(\check{h})} \lim\limits_{\underset{r \to 1}{r \in \E_{\sigma, \S_{t}}}}  \displaystyle\int_{\S^{1}} \cfrac{h'^{m-1}_{1}dh'_{1}}{h'_{1} - rh_{\sigma(1)}}\displaystyle\int_{\S^{1}} \cfrac{h'^{-1}_{2}dh'_{2}}{h'_{2} - rh_{\sigma(2n+1)}} \\
& = & \cfrac{1}{(2n-1)!}\sum\limits_{i \in K(h) \cup \A_{t}} \sum\limits_{j \in J(h) \cup \B_{t}} h_{i}h_{j}\Delta(i, j, \check{h}) h^{k-1}_{i}h^{-1}_{j} \\
& = & \cfrac{1}{(2n-1)!} \sum\limits_{i \in K(h) \cup \A_{t}} \sum\limits_{j \in J(h) \cup \B_{t}} \cfrac{h^{n+m}_{i}h^{n}_{j} \prod\limits_{\underset{k \neq i, j}{k=1}}^{2n+1} h_{k}}{\prod\limits_{\underset{k \neq i}{k=1}}^{2n+1} (h_{i} - h_{k}) \prod\limits_{\underset{l \neq i, j}{l=1}}^{2n+1} (h_{j} - h_{l})}\,,
\end{eqnarray*}
and the theorem follows.
\end{proof}

\noindent We finish this section with a Lemma concerning the formulas we got in Theorem \ref{PropositionFinalFormulaThetaPin}.

\begin{lemme}

For every $h \in \H_{n, \S_{t}}$, $\prod\limits_{k=2}^{2n} \left(1 - h_{1}h^{-1}_{k}\right) \prod\limits_{k=2}^{2n} \left(1- h_{k}h^{-1}_{2n+1}\right) \in \mathbb{R}$, and its sign is constant on every Weyl chamber.

\label{Lemma1502}

\end{lemme}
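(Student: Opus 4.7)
The plan is to show that the two factors
\[
A := \prod_{k=2}^{2n}(1 - h_1 h_k^{-1}), \qquad B := \prod_{k=2}^{2n}(1 - h_k h_{2n+1}^{-1})
\]
are complex conjugates of one another (after reindexing), so that their product $AB = |A|^2$ is real and non-negative — and strictly positive on the regular set, hence of constant sign on every Weyl chamber. The argument applies for $t \ge 1$; for $t = 0$ the $\Sigma$-term in Theorem \ref{PropositionFinalFormulaThetaPin} containing this product is weighted by $\delta_{t,0}$, which vanishes at $t=0$ by the definition given in the proof of Theorem \ref{IntegralFormulaCharacter}, so the statement is only needed for $t \ge 1$.

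The first step is to read off from Equation \eqref{EquationH(S)} the behaviour of the coordinates of $h$ under complex conjugation. For each split pair $s \in \{1, \ldots, t\}$ one has $h_s = e^{iX_s - X_{2n+2-s}}$ and $h_{2n+2-s} = e^{iX_s + X_{2n+2-s}}$, whence $\overline{h_s} = h_{2n+2-s}^{-1}$; for each middle index $t+1 \le k \le 2n+1-t$, $h_k = e^{iX_k}$ lies on the unit circle, so $\overline{h_k} = h_k^{-1}$. Introducing the involution $\iota$ of $\{1, \ldots, 2n+1\}$ defined by $\iota(k) = 2n+2-k$ on the split indices and $\iota(k) = k$ on the middle indices, we obtain $\overline{h_k} = h_{\iota(k)}^{-1}$ uniformly in $k$; in particular $\overline{h_1} = h_{2n+1}^{-1}$, and a direct case check shows that $\iota$ restricts to a bijection of $\{2, \ldots, 2n\}$.

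The conclusion is then immediate: conjugating $A$ factor by factor yields
\[
\overline{A} \;=\; \prod_{k=2}^{2n}\bigl(1 - \overline{h_1}\,\overline{h_k}^{-1}\bigr) \;=\; \prod_{k=2}^{2n}\bigl(1 - h_{2n+1}^{-1} h_{\iota(k)}\bigr),
\]
and reindexing $k \mapsto \iota(k)$, using that $\iota$ is a bijection of $\{2, \ldots, 2n\}$, identifies the right-hand side with $B$. Hence $AB = A\overline{A} = |A|^2 \in \mathbb{R}_{\ge 0}$; on $\H_{n, \S_t}^{\reg}$ no factor of $A$ vanishes, so $|A|^2 > 0$ and the sign is constantly $+1$ on every Weyl chamber. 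The only delicate point in the argument is the bookkeeping needed to verify that $\iota$ really preserves $\{2, \ldots, 2n\}$ and that $\overline{h_k} = h_{\iota(k)}^{-1}$ holds uniformly across the three blocks of indices (the two split blocks of size $t$ and the compact middle block of size $2n+1-2t$); once this is in place, the rest reduces to the one-line computation above.
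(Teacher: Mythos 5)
Your proof is correct, and it streamlines the paper's argument. The paper verifies the claim by an explicit computation for $t=2$ (citing a companion paper for $t=1$ and asserting that the general $t$ is similar): it separates the middle factors, where $(1 - h_1 h_k^{-1})$ and $(1 - h_k h_{2n+1}^{-1})$ pair off as complex conjugates giving moduli-squared, from the boundary (split) factors, which it handles by a further ad hoc pairing and an expansion into real trigonometric expressions of the form $1 - 2a\cos\theta + a^2$. Your observation is the global version of the same conjugation idea: the involution $\iota$ (sending $k \mapsto 2n+2-k$ on split indices, fixing the middle indices) encodes $\overline{h_k} = h_{\iota(k)}^{-1}$ uniformly across all three blocks, and conjugating $A$ wholesale and reindexing by $\iota$ gives exactly $B$, with no case analysis on $t$. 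This also yields a strictly stronger conclusion than the lemma asserts: $AB = |A|^2$ is non-negative everywhere and strictly positive on $\H_{n,\S_t}^{\reg}$, so the sign is identically $+1$ on every chamber. Your remark that the claim is only needed for $t\ge 1$ is apt — the $\Sigma$-term in Theorem \ref{PropositionFinalFormulaThetaPin} is weighted by $\delta_{t,0}$, which the paper defines to vanish at $t=0$, and indeed for $t=0$ the relation $\overline{h_1}=h_{2n+1}^{-1}$ fails, so the conjugation argument (like the lemma itself) is not available in that case.
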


\begin{proof}

This result was obtained in \cite[Lemma~6.9]{MER} for $t = 1$. We prove this lemma for $t = 2$ (the proof of the general statement is similar). Assume that $t = 2$. We get:
\begin{equation*}
\prod\limits_{k=2}^{2n} \left(1 - h_{1}h^{-1}_{k}\right) \prod\limits_{k=2}^{2n} \left(1- h_{k}h^{-1}_{2n+1}\right) = (1 - h_{1}h^{-1}_{2})(1 - h_{1}h^{-1}_{2n})(1 - h_{2}h^{-1}_{2n+1})(1 - h_{2n}h^{-1}_{2n+1})\left(\prod\limits_{k=3}^{2n-1} \left(1 - h_{1}h^{-1}_{k}\right)\left(1- h_{k}h^{-1}_{2n+1}\right)\right)\,.
\end{equation*}
Firstly, 
\begin{equation*}
\left(\prod\limits_{j=3}^{2n-1} \left(1 - h_{1}h^{-1}_{j}\right)\left(1- h_{j}h^{-1}_{2n+1}\right)\right) = \prod\limits_{j=3}^{2n-1} \left(1 - e^{iX_{1} - X_{2n+1}}e^{-iX_{j}}\right)\left(1- e^{iX_{j}}e^{-iX_{1} - X_{2n+1}}\right) = \prod\limits_{j=3}^{2n-1} \left|1 - e^{iX_{1} - X_{2n+1}}e^{-iX_{j}}\right|^{2}\,.
\end{equation*}
Moreover,
\begin{equation*}
(1 - h_{1}h^{-1}_{2n})(1 - h_{2n}h^{-1}_{2n+1}) = (1 - e^{iX_{1} - X_{2n+1}}e^{-iX_{2} + X_{2n}})(1 - e^{iX_{2} + X_{2n}}e^{-iX_{1} - X_{2n+1}}) = 1 - 2\cos(X_{1} - X_{2})e^{X_{2n} - X_{2n+1}} + e^{2(X_{2n} - X_{2n+1})}
\end{equation*}
and
\begin{equation*}
(1 - h_{2}h^{-1}_{2n+1})(1 - h_{1}h^{-1}_{2n}) = (1 - e^{iX_{2} - X_{2n}}e^{-iX_{1} + X_{2n+1}})(1 - e^{iX_{1} + X_{2n+1}}e^{-iX_{2} - X_{2n}}) = 1 - 2\cos(X_{2} - X_{1})e^{X_{2n+1} - X_{2n+1}} + e^{2(X_{2n+1} - X_{2n})}\,.
\end{equation*}
so the lemma follows.
\end{proof}

\appendix

\section{The dual pair $(\G = \U(1), \G' = \U(p, q))$}

\label{ComputationsU(1)}

In \cite[Proposition~6.4]{MER}, the first author gave explicit formulas for the value of the character $\Theta_{\Pi'}$ on the compact Cartan $\widetilde{\H'} = \widetilde{\H'}(\emptyset)$ of $\widetilde{\G}'$. Moreover, for $p = q = 1$, he computed the character $\Theta_{\Pi'}$ on $\widetilde{\H}'(\S_{1})$, where $\H'(\S_{1})$ is the non-compact Cartan subgroup of $\widetilde{\U}(1, 1)$ as in Equation \ref{CartanU(1,1)}. In this section, we recover the results proved in \cite{MER} using the results of Section \ref{SectionUnitary} and get formulas for $\Theta_{\Pi'}$ on every Cartan subgroup of $\widetilde{\G'}$

\noindent By keeping the notations of Section \ref{SectionUnitary}, we get $\V = \mathbb{C}$ with the hermitian form $\left(\cdot, \cdot\right)$ given by
\begin{equation*}
\left(u,v\right)=u\overline{v}, \qquad (u, v \in \V)\,,
\end{equation*}
$\V' = \M_{n',1}(\mathbb{C})$, where $n'=p+q$, with the skew-hermitian form $\left(\cdot, \cdot\right)'$ given by:
\begin{equation*}
\left(u,v\right)=\overline{v}^{t}i\Id_{p,q}u\,,
\end{equation*}
and $\W = \V \otimes_{\mathbb{C}} \V'$ the symplectic space defined by
\begin{equation*}
\langle w, w'\rangle =\Re(\tr(w'^*w)) = \Im(\overline{w'}^t\Id_{p,q}w)\,.
\end{equation*}
Similarly, 
\begin{equation*}
\H= \G= \U(1)=\left\{h \in \mathbb{C}, |h|=1\right\}\,, \qquad \J_{1} = i, \qquad \mathfrak{h} =\mathbb{R}\J_{1}\,,
\end{equation*}
and the group $\GL_{\mathbb{C}}(\W)$ is given by:
\begin{equation*}
\GL_{\mathbb{C}}(\W)=\left\{g \in \GL(\W),\ \J'_{1}g=g\J'_{1}\right\}=\G'_{\mathbb{C}}\,, \qquad 
\widetilde{\GL}_{\mathbb{C}}(\W) =\left\{\tilde{g} = (g,\xi), g \in \GL_{\mathbb{C}}(\W), \det(g)=\xi^2\right\}\,.
\end{equation*}
Using that $\V' = \V'_{1} \oplus \ldots \oplus \V'_{n'}$, with $\V'_{j} = \mathbb{C}\E_{j, 1}$, and the embedding
\begin{equation*}
\mathfrak{h}_{\mathbb{C}} \ni \lambda \to (\lambda, 0, \ldots, 0) \in \mathfrak{h}'_{\mathbb{C}}\,,
\end{equation*}
we get that
\begin{equation*}
\W^{\mathfrak{h}}=\left\{w=(w_{1,1}, 0, 0, ..., 0)\,,\ w_{1,1} \in \mathbb{C}\right\}, \qquad \Z' = \G'^{\mathfrak{h}} = \left\{g' \in \G', g' = \begin{pmatrix} \lambda & 0 \\ 0 & X\end{pmatrix}, \lambda \in \U(1), X \in \GL(n'-1, \mathbb{C})\right\}\,.
\end{equation*}
In particular, $\Phi'(\Z') = \left\{\pm(e_{i} - e_{j}), 2 \leq i < j\leq n'\right\}$, where $e_{j}$ is the form defined in Notation \ref{NotationsRootsUnitary}. For every $\check{h}' \in \check{\H}'_{\mathbb{C}}$, with $h' = (h'_{1}, \ldots, h'_{n'})$, we get:
\begin{equation*}
\Delta_{\Phi'}(\check{h'}) = \prod\limits_{\alpha > 0} (\check{h}'^{\frac{\alpha}{2}} - \check{h}'^{-\frac{\alpha}{2}}) = \prod\limits_{1 \leq i < j \leq n'} (h'^{\frac{1}{2}}_{i}h'^{-\frac{1}{2}}_{j} - h'^{-\frac{1}{2}}_{i}h'^{\frac{1}{2}}_{j}) = \prod\limits_{i=1}^{n'} h'^{-\frac{n'-1}{2}}_{i} \prod\limits_{1 \leq i < j \leq n'} (h'_{i} - h'_{j})\,,
\end{equation*}
and for every $\sigma \in \mathscr{S}_{n'}$,
\begin{equation*}
\Delta_{\Phi'(\Z')}(\sigma(\check{h'})) = \prod\limits_{i=2}^{n'} h'^{-\frac{n'-2}{2}}_{\sigma(i)} \prod\limits_{2 \leq i < j \leq n'} (h'_{\sigma(i)} - h'_{\sigma(j)}) = \varepsilon(\sigma) \prod\limits_{\underset{i \neq \sigma(1)}{i=1}}^{n'} h'^{-\frac{n'-2}{2}}_{i} \prod\limits_{\underset{i, j \neq \sigma(1)}{1 \leq i < j \leq n'}} (h'_{i} - h'_{j})\,.
\end{equation*}
In particular, 
\begin{equation*}
\cfrac{\Delta_{\Phi'(\Z')}(\sigma(\check{h'}))}{\Delta_{\Phi'}(\check{h'})} = \varepsilon(\sigma) \cfrac{h'^{\frac{n'-1}{2}}_{\sigma(1)}\prod\limits_{\underset{i \neq \sigma(1)}{i=1}}^{n'} h'^{\frac{1}{2}}_{i}}{\prod\limits_{\underset{i \neq \sigma(1)}{i=1}}^{n'} (h'_{\sigma(1)} - h'_{i})} = \varepsilon(\sigma) \cfrac{h'^{\frac{n'-2}{2}}_{\sigma(1)}\prod\limits_{i=1}^{n'} h'^{\frac{1}{2}}_{i}}{\prod\limits_{\underset{i \neq \sigma(1)}{i=1}}^{n'} (h'_{\sigma(1)} - h'_{i})} \qquad (\check{h'} \in \check{\H}'_{\mathbb{C}})\,.
\end{equation*}

\begin{nota}

As in Section \ref{SectionUnitary2}, because the set of genuine representations of $\widetilde{\U}(1)$ is isomorphic to $\mathbb{Z}$, we will denote by $\Pi_{m}, m \in \mathbb{Z}$, the representations of $\mathscr{R}(\widetilde{\U}(1), \omega)$. Using \cite{VER}, we get that $\Pi_{m}(\tilde{h}) = \pm h^{m + \frac{q-p}{2}}$. We will denote by $\Pi'_{m}$ the corresponding representation of $\widetilde{\U}(p, q)$ and by $\Theta_{\Pi'_{m}}$ its character.

\label{NotationA1}

\end{nota}

\begin{prop}

For every $\S \subseteq \Psi'^{\st}_{n}$ (see Appendix \ref{AppendixCartanUnitary}), the value of the character $\Theta_{\Pi'_{m}}$ on $\widetilde{\H'}(\S)^{\reg}$ is given by the following formula:
\begin{equation*}
\Delta_{\Phi'}(\check{h}')\Theta_{\Pi'_{m}}(c(\S)\check{p}(\check{h}')c(\S)^{-1}) = \sum\limits_{\sigma \in \mathscr{W}(\H'_{\mathbb{C}})} \cfrac{(-1)^{u}\varepsilon(\sigma)}{|\mathscr{W}(\Z'_{\mathbb{C}}, \H'_{\mathbb{C}})|} \Delta_{\Phi'(\Z')}(\sigma^{-1}(\check{h}'))\det^{-\frac{k}{2}}(\sigma^{-1}(\check{h}'))_{\W^{\mathfrak{h}}} \lim\limits_{\underset{r \to 1}{r \in \E_{\sigma, \S}}} \displaystyle\int_{\check{\U}(1)} \cfrac{\overline{\Theta_{\Pi_{m}}(\check{p}(\check{h}))} \det^{-\frac{k}{2}}(\check{h})}{\det(1-p(\check{h})rp(\check{h}'))_{\sigma\W^{\mathfrak{h}}}} d\check{h}
\end{equation*}
for every $\check{h}' \in \check{\H}'^{\reg}_{\S}$.
\label{Proposition15Juillet}

\end{prop}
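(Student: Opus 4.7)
The plan is to prove the claimed formula by transferring the character $\Theta_{\Pi_m}$ of the compact group $\U(1)$ to $\widetilde{\G'}=\widetilde{\U}(p,q)$ via the Cauchy--Harish-Chandra machinery. The argument mirrors the proof of Theorem \ref{IntegralFormulaCharacter}, but the setting is substantially simpler: $\G = \U(1)$ is compact with a unique Cartan subgroup $\H = \U(1)$ itself, and the quotient $\mathfrak{g}/\mathfrak{h}$ is trivial so that several Jacobian factors reduce to $1$.

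First I would invoke Remark \ref{RemarkConjecture}(1): since $\G$ is compact, there is a constant $C$ such that $\Theta_{\Pi'_m} = C\,\Chc^{*}(\overline{\Theta_{\Pi_m}})$ on $\widetilde{\G'}$. Combining this with Equation \eqref{ChcStar} at the unique Cartan $\H$ of $\U(1)$, and using $|\det(\Id-\Ad(\tilde{h}^{-1}))_{\mathfrak{g}/\mathfrak{h}}|=1$, I obtain
\[
C\,\Theta_{\Pi'_m}(\Psi) = \displaystyle\int_{\widetilde{\U}(1)} \overline{\Theta_{\Pi_m}(\tilde{h})}\, \Chc_{\tilde{h}}(\Psi)\, d\tilde{h}, \qquad \Psi \in \mathscr{C}^{\infty}_{c}(\widetilde{\G'}).
\]
I would then fix $\S \subseteq \Psi'^{\st}_n$, restrict to $\Psi$ supported in $\widetilde{\G'}\cdot\widetilde{\H'}(\S)^{\reg}$, and abbreviate $I_\Psi(\check{h}') := \int_{\G'/\H'(\S)}\Psi(g'c(\S)\check{p}(\check{h}')c(\S)^{-1}g'^{-1})\,d\bar{g}'$.

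On the one hand, Weyl's integration formula (Remark \ref{RemarkWeylIntegration2}), the $\widetilde{\G'}$-invariance of $\Theta_{\Pi'_m}$, the definition of $\mathscr{H}_\S$, and $|\Delta_{\G'}|^{2} = \Delta_{\Phi'}\Delta_{\Psi'}$ (Remark \ref{DeltaSquare}) give
\[
\Theta_{\Pi'_m}(\Psi) = m_\S\displaystyle\int_{\check{\H}'_\S}\Theta_{\Pi'_m}(c(\S)\check{p}(\check{h}')c(\S)^{-1})\,\Delta_{\Phi'}(\check{h}')\Delta_{\Psi'}(\check{h}')\,I_\Psi(\check{h}')\,d\check{h}'.
\]
On the other hand, Theorem \ref{TheoremTheta} with $\Delta_{\Psi} \equiv 1$ (no roots on $\U(1)$) expresses $\int_{\widetilde{\G'}}\Theta(\check{p}(\check{h})\tilde{g}')\Psi(\tilde{g}')d\tilde{g}'$ as $\det^{-k/2}(\check{h})_{\W^{\mathfrak{h}}}$ times a sum over $\sigma \in \mathscr{W}(\H'_\mathbb{C})$ and $\S' \subseteq \Psi'^{\st}_n$ of integrals over $\check{\H}'_{\S'}$; the support hypothesis on $\Psi$ kills every $\S' \neq \S$ (by Remark \ref{RemarkWeylIntegration2}). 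Substituting $\mathscr{H}_\S\Psi(\check{h}') = \varepsilon_{\Psi'_{\S,\mathbb{R}}}(\check{h}')\Delta_{\Psi'}(\check{h}')I_\Psi(\check{h}')$, identifying $\Chc_{\tilde{h}}(\Psi)$ with this integral (again because the $\mathfrak{g}/\mathfrak{h}$ Jacobian is trivial, as used in passing from the left-hand to the right-hand side of \eqref{EquationCompactEqn} in the proof of Theorem \ref{IntegralFormulaCharacter}), and applying Fubini, the transfer identity yields
\[
C\,\Theta_{\Pi'_m}(\Psi) = \displaystyle\int_{\check{\H}'_\S} K(\check{h}')\,\Delta_{\Psi'}(\check{h}')\,I_\Psi(\check{h}')\,d\check{h}',
\]
where $K(\check{h}')$ is the sum over $\sigma$ of $\M_\S(\sigma) = \frac{(-1)^{u}\varepsilon(\sigma)m_\S}{|\mathscr{W}(\Z'_\mathbb{C},\H'_\mathbb{C})|}$ times the $\sigma$-piece of the proposed right-hand side. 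Since the family $\{I_\Psi\}$ is dense in $\mathscr{C}^{\infty}_{c}((\check{\H}'_\S)^{\reg})$ (by surjectivity of the orbital integral map, \cite[Theorem~3.2.1]{BOU}), equating the two expressions forces pointwise equality of the integrands; cancelling the common factor $\Delta_{\Psi'}(\check{h}')$ and the constant $m_\S$ produces exactly the claimed identity after absorbing $C$.

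The main technical obstacle will be the bookkeeping of the sign functions $\varepsilon_{\Psi'_{\S,\mathbb{R}}}(\check{h}')$ coming from Weyl's integration formula and $\varepsilon_{\Phi'_{\S,\mathbb{R}}}(\check{h}')$ coming from Theorem \ref{TheoremTheta}, together with the $\det^{\pm k/2}$ twists on the double cover $\check{\H}'_\mathbb{C}$; one must verify that these combine exactly into the $(-1)^{u}\varepsilon(\sigma)$ factor of the final formula, and confirm (using $\Phi' = -\Psi'$) that the cancellation $|\Delta_{\G'}|^{2}/\Delta_{\Psi'} = \Delta_{\Phi'}$ produces precisely the left-hand side as stated.
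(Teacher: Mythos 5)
Your proposal correctly identifies the two ingredients (a kernel identity relating $\Theta_{\Pi'_m}$ to $\Theta$ and $\overline{\Theta_{\Pi_m}}$, then a comparison of Weyl integration against Theorem \ref{TheoremTheta}), but the route you take to the first ingredient differs from the paper's and has a genuine gap. The paper does not invoke $\Chc^{*}$ or Remark \ref{RemarkConjecture}(1) at all. Because $\U(1)$ is compact, the projection $\mathscr{P}_{\Pi_m}$ of the oscillator representation onto the $\Pi_m$-isotypic component is $\omega(\overline{\Theta_{\Pi_m}})$ (Schur orthogonality), and since that isotypic component is $\Pi_m \otimes \Pi'_m$ with $\dim \Pi_m = 1$, one obtains the \emph{exact} identity
\begin{equation*}
\Theta_{\Pi'_m}(\Psi) = \tr\bigl(\mathscr{P}_{\Pi_m} \circ \omega(\Psi)\bigr) = \displaystyle\int_{\widetilde{\G'}}\left(\displaystyle\int_{\check{\U}(1)}\overline{\Theta_{\Pi_m}(\check{p}(\check{h}))}\,\Theta(\check{p}(\check{h})\tilde{g}')\,d\check{h}\right)\Psi(\tilde{g}')\,d\tilde{g}',
\end{equation*}
with no undetermined constant. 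From there the paper proceeds exactly as you describe: expand $\Theta_{\Pi'_m}(\Psi)$ over $\check{\H}'_{\S}$ once via Weyl integration (Remark \ref{RemarkWeylIntegration2}) and once via Theorem \ref{TheoremTheta}, use the support hypothesis to kill all $\S' \neq \S$, cancel the common $\mathscr{H}_{\S}\Psi$ factor, and compare integrands.

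Your detour through $\Chc^{*}$ and Remark \ref{RemarkConjecture}(1) has two concrete problems. First, that remark only gives $\Theta_{\Pi'_m} = C\,\Chc^{*}(\overline{\Theta_{\Pi_m}})$ \emph{up to a constant}, so your argument as written only proves the proposition up to an undetermined scalar; you cannot ``absorb $C$'' into a formula whose constants ($(-1)^{u}$, $|\mathscr{W}(\Z'_{\mathbb{C}}, \H'_{\mathbb{C}})|$, the $m_{\S}$-cancellation against Weyl integration) are all pinned down. Second, the justification for the compact case of Conjecture \ref{ConjectureHC} is precisely the projection argument $\mathscr{P}_{\Pi_m} = \omega(\overline{\Theta_{\Pi_m}})$; routing through it as a black box here is an unnecessary indirection for an appendix whose job is to establish this $\U(1)$ base case from scratch. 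Replace the appeal to $\Chc^{*}$ by the displayed projection identity and the remainder of your computation, including the sign bookkeeping via $\varepsilon_{\Psi'_{\S,\mathbb{R}}}$, $\varepsilon_{\Phi'_{\S,\mathbb{R}}}$ and Remark \ref{DeltaSquare}, aligns with the paper's.
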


\begin{proof}

Let $\Psi \in \mathscr{C}^{\infty}_{c}(\widetilde{\G'})$ such that $\supp(\Psi) \subseteq \widetilde{\G'}\cdot\widetilde{\H}'(\S)$, we get:
\begin{equation}
\Theta_{\Pi'_{m}}(\Psi) = \tr(\mathscr{P}_{\Pi_{m}} \circ \omega(\Psi)) = \displaystyle\int_{\widetilde{\G'}} \left(\displaystyle\int_{\widetilde{\U}(1)} \overline{\Theta_{\Pi_{m}}(\tilde{g})} \Theta(\tilde{g}\tilde{g}') d\tilde{g}\right) \Psi(\tilde{g}')d\tilde{g}' = \displaystyle\int_{\widetilde{\G'}} \left(\displaystyle\int_{\check{\U}(1)} \overline{\Theta_{\Pi_{m}}(\check{p}(\check{h}))} \Theta(\check{p}(\check{h})\tilde{g}') d\check{h}\right) \Psi(\tilde{g}')d\tilde{g}',
\label{EquationAppendixU(1)}
\end{equation}
where $\mathscr{P}_{\Pi_{m}}: \mathscr{H} \to \mathscr{H}(\Pi_{m})$ is the projection onto the $\Pi_{m}$-isotypic component given by $\mathscr{P}_{\Pi_{m}} = \omega(\overline{\Theta_{\Pi_{m}}})$ (see \cite[Section~1.4.6]{WAL}), i.e. as a generalized function on $\widetilde{\G'}$\,, 
\begin{equation*}
\Theta_{\Pi'_{m}}(\tilde{g}') = \displaystyle\int_{\check{\U}(1)} \overline{\Theta_{\Pi_{m}}(\check{p}(\check{h})))} \Theta(\check{p}(\check{h})\tilde{g}') d\tilde{g}\,, \qquad (\tilde{g}' \in \widetilde{\G'})\,.
\end{equation*}
Using Remark \ref{RemarkWeylIntegration2}, we get:
\begin{eqnarray}
\Theta_{\Pi'_{m}}(\Psi) & = & \displaystyle\int_{\widetilde{\G'}} \Theta_{\Pi'_{m}}(\tilde{g}') \Psi(\tilde{g}') d\tilde{g}' = m_{\S} \displaystyle\int_{\check{\H}'_{\S}} \varepsilon_{\Psi'_{\S, \mathbb{R}}}(\check{h}') \Delta_{\Phi'}(\check{h}') \mathscr{H}_{\S}(\Theta_{\Pi'_{m}}\Psi)(\check{h}')d\check{h}' \nonumber \\
& = & m_{\S} \displaystyle\int_{\check{\H}'_{\S}} \varepsilon_{\Psi'_{\S, \mathbb{R}}}(\check{h}') \Delta_{\Phi'}(\check{h}') \Theta_{\Pi'_{m}}(c(\S)\check{p}(\check{h}')c(\S)^{-1})\mathscr{H}_{\S}\Psi(\check{h}')d\check{h}'\,. \label{Eqnarray1}
\end{eqnarray}
Using Theorem \ref{TheoremTheta}, Equation \eqref{EquationAppendixU(1)} can be written as:{\small
\begin{eqnarray}
& & \Theta_{\Pi'_{m}}(\Psi) \nonumber \\
& = & \displaystyle\int_{\check{\U}(1)} \overline{\Theta_{\Pi_{m}}(\check{p}(\check{h}))} \displaystyle\int_{\widetilde{\G'}} \Theta(\check{p}(\check{h})\tilde{g}')\Psi(\tilde{g}')d\tilde{g}'d\check{h} = \displaystyle\int_{\check{\U}(1)} \overline{\Theta_{\Pi_{m}}(\check{p}(\check{h}))} \det^{-\frac{k}{2}}(\check{h})\left(\det^{\frac{k}{2}}(\check{h})\displaystyle\int_{\widetilde{\G'}} \Theta(\check{p}(\check{h})\tilde{g}')\Psi(\tilde{g'})d\tilde{g}'\right)d\check{h} \nonumber \\
        & = & \displaystyle\int_{\check{\U}(1)} \overline{\Theta_{\Pi_{m}}(\check{p}(\check{h}))} \det^{-\frac{k}{2}}(\check{h}) \sum\limits_{\sigma \in \mathscr{W}(\H'_{\mathbb{C}})} \M_{\S}(\sigma)\lim\limits_{\underset{r \to 1}{r \in \E_{\sigma, \S}}} \displaystyle\int_{\check{\H}'_{\S}}\cfrac{\det^{-\frac{k}{2}}(\sigma^{-1}(\check{h}'))_{\W^{\mathfrak{h}}} \Delta_{\Phi'(\Z')}(\sigma^{-1}(\check{h}'))}{\det(1-p(\check{h})rp(\check{h}'))_{\sigma\W^{\mathfrak{h}}}} \varepsilon_{\Psi'_{\S, \mathbb{R}}}(\check{h}') \mathscr{H}_{\S}(\Psi)(\check{h}') d\check{h}' d\check{h} \nonumber\\
        & = & \displaystyle\int_{\check{\H}'_{\S}} \varepsilon_{\Psi'_{\S, \mathbb{R}}}(\check{h}') \left(\sum\limits_{\sigma \in \mathscr{W}(\H'_{\mathbb{C}})} \M_{\S}(\sigma) \Delta_{\Phi'(\Z')}(\sigma^{-1}(\check{h}'))\det^{-\frac{k}{2}}(\sigma^{-1}(\check{h}'))_{\W^{\mathfrak{h}}} \lim\limits_{\underset{r \to 1}{r \in \E_{\sigma, \S}}} \displaystyle\int_{\check{\U}(1)} \cfrac{\overline{\Theta_{\Pi_{m}}(\check{p}(\check{h}))} \det^{-\frac{k}{2}}(\check{h})}{\det(1-p(\check{h})rp(\check{h}'))_{\sigma \W^{\mathfrak{h}}}} d\check{h}\right)\mathscr{H}_{\S}(\Psi)(\check{h}') d\check{h}'\,. \label{ThetaPiM}
\end{eqnarray}}

\noindent The result follows by comparing Equations \ref{Eqnarray1} and \ref{ThetaPiM}.

\end{proof}

\noindent Without loss of generality, we assume that $p \leq q$ and keep the notations of Appendix \ref{AppendixCartanUnitary} (see Equation \eqref{DiagonalCartanSubgroupHS}). In particular, for every $h' \in \H'_{\S_{t}}$, $0 \leq t \leq p$, $h'$ is of the form
\begin{equation*}
h' = (h'_{1}, \ldots, h'_{n'}) = \diag(e^{iX_{1} - X_{p+1}}, \ldots, e^{iX_{t} - X_{p+t}}, e^{iX_{t+1}}, \ldots, e^{iX_{p}}, e^{iX_{1} + X_{p+1}}, \ldots, e^{iX_{t} + X_{p+t}}, e^{iX_{p+t+1}}, \ldots, e^{iX_{p+q}})\,, \qquad X_{j} \in \mathbb{R}\,,
\end{equation*}
where $\S_{t} = \left\{e_{1} - e_{p+1}, \ldots, e_{t} - e_{p+t}\right\}$.
\begin{prop}

The value of the character $\Theta_{\Pi'_{m}}$ is given, for every $\check{h}' \in \check{\H}'_{\S_{t}}$, $0 \leq t \leq p$, by the formula:
\begin{equation*}
\Theta_{\Pi'_{m}}(c(\S_{t})\check{p}(\check{h}')c(\S_{t})^{-1}) = 
\end{equation*}
\begin{equation*}
\pm\C^{2}\begin{cases} - \sum\limits_{\underset{j \in J(h')}{j=1}}^{t} \cfrac{h'^{-m+p-1}_{j}\left(\prod\limits_{i=1}^{n'} h'^{\frac{1}{2}}_{i}\right)}{\prod\limits_{\underset{i \neq j}{i=1}}^{n'} (h'_{j} - h'_{i})}  -  \sum\limits_{\underset{j \in K(h')}{j=1}}^{t} \cfrac{h'^{-m+p-1}_{p+j}\left(\prod\limits_{i=1}^{n'} h'^{\frac{1}{2}}_{i}\right)}{\prod\limits_{\underset{i \neq p+j}{i=1}}^{n'} (h'_{p+j} - h'_{i})} - \sum\limits_{i=t+1}^{p} \cfrac{h'^{-m+p-1}_{i}\left(\prod\limits_{j=1}^{n'} h'^{\frac{1}{2}}_{j}\right)}{\prod\limits_{\underset{j \neq i}{j=1}}^{n'} (h'_{i} - h'_{j})} & \text{ if } m \leq - 1 -\frac{q-p}{2} \\
\sum\limits_{\underset{j \in K(h')}{j=1}}^{t} \cfrac{h'^{-m+p-1}_{j}\left(\prod\limits_{i=1}^{n'} h'^{\frac{1}{2}}_{i}\right)}{\prod\limits_{\underset{i \neq j}{i=1}}^{n'} (h'_{j} - h'_{i})} +  \sum\limits_{\underset{j \in K(h')}{j=1}}^{t} \cfrac{h'^{-m+p-1}_{p+j}\left(\prod\limits_{i=1}^{n'} h'^{\frac{1}{2}}_{i}\right)}{\prod\limits_{\underset{i \neq p+j}{i=1}}^{n'} (h'_{p+j} - h'_{i})} + \sum\limits_{j=p+t+1}^{p+q} \cfrac{h'^{-m+p-1}_{j}\left(\prod\limits_{i=1}^{n'} h'^{\frac{1}{2}}_{i}\right)}{\prod\limits_{\underset{j \neq i}{i=1}}^{n'} (h'_{j} - h'_{i})} & \text{ otherwise } \end{cases}
\end{equation*}
where $\C = \frac{1}{(p+q-1)!}$, $h' = (h'_{1}, \ldots, h'_{n'})$ and $K(h')$, $J(h')$ are given by:
\begin{equation*}
J(h') = \{j \in \{1, \ldots, t\}, \sgn(X_{p+j}) = 1\}\,, \qquad K(h') = \{j \in \{1, \ldots, t\}, \sgn(X_{p+j}) = -1\}\,,
\end{equation*}

\label{PropositionA3}

\end{prop}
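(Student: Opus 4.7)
The plan is to apply Proposition \ref{Proposition15Juillet} with $\S = \S_t$, and to reduce the resulting expression to explicit contour integrals on the circle which can be evaluated by Lemma \ref{LemmaComplexIntegrals}. The key simplifying fact is that $\W^{\mathfrak{h}} = \mathbb{C}\E_{1,1}$ is one-dimensional, so $\sigma\W^{\mathfrak{h}} = \mathbb{C}\E_{\sigma(1),1}$, and the determinants appearing in Proposition \ref{Proposition15Juillet} depend on $\sigma$ only through $\sigma(1)$. In particular $\det(1 - p(\check{h})rp(\check{h}'))_{\sigma\W^{\mathfrak{h}}} = 1 - h\,r\,h'_{\sigma(1)}$ and $\det^{-k/2}(\sigma^{-1}(\check{h}'))_{\W^{\mathfrak{h}}}$ is a power of $h'_{\sigma(1)}$.

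First, I would substitute the quotient $\Delta_{\Phi'(\Z')}(\sigma^{-1}(\check{h}'))/\Delta_{\Phi'}(\check{h}')$ computed at the beginning of Appendix \ref{ComputationsU(1)}, whose dependence on $\sigma$ is $\varepsilon(\sigma)$ times a function of $\sigma(1)$ alone. Grouping the sum over $\sigma\in\mathscr{S}_{n'}$ by $i := \sigma(1)$ produces the multiplicity $(n'-1)! = |\mathscr{W}(\Z'_{\mathbb{C}},\H'_{\mathbb{C}})|$, which cancels the corresponding factor in $\M_{\S_t}(\sigma)$. After this reorganization the right-hand side of Proposition \ref{Proposition15Juillet} becomes
\begin{equation*}
\Delta_{\Phi'}(\check h')\Theta_{\Pi'_m}(c(\S_t)\check p(\check h')c(\S_t)^{-1}) = \pm \C^{2} \sum_{i=1}^{n'}\frac{h'^{(n'-2)/2}_i\prod_{l=1}^{n'}h'^{1/2}_l}{\prod_{l\ne i}(h'_i - h'_l)}\cdot I_i(\check h'),
\end{equation*}
where $I_i(\check h') = \lim_{r\to 1}\frac{1}{2i\pi}\int_{\S^1}\frac{z^{a}\,dz}{z - rh'_i}$ and $a$ is an integer determined by $m$, $(q-p)/2$ and $k$ coming from $\overline{\Theta_{\Pi_m}}$ and the $\det^{-k/2}$ factors (Notation \ref{NotationA1}).

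Next, I would evaluate $I_i$ using Lemma \ref{LemmaComplexIntegrals}. Writing $h'\in\H'_{\S_t}$ as in \eqref{DiagonalCartanSubgroupHS}, one has $|h'_j| = e^{-X_{p+j}}$ for $1\le j\le t$, $|h'_{p+j}| = e^{X_{p+j}}$ for $1\le j\le t$, and $|h'_i|=1$ otherwise. For the split indices the sign of $X_{p+j}$ selects one of the two branches of Lemma \ref{LemmaComplexIntegrals}, giving precisely the partition $\{1,\ldots,t\} = J(h')\sqcup K(h')$ used in the statement. For the unit-modulus indices the side from which $r\to 1$ approaches $1$ is dictated by the open cone $\E_{\sigma,\S_t}$; the computation of $\Gamma_{\sigma,\S_t}$ proceeds exactly as in the proof of Theorem \ref{PropositionFinalFormulaThetaPin}, and produces $|rh'_i|<1$ for $p+t+1\le i\le p+q$ and $|rh'_i|>1$ for $t+1\le i\le p$.

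Finally, the case split $m\le -1-\tfrac{q-p}{2}$ vs $m\ge -\tfrac{q-p}{2}$ in the statement is exactly the split according to the sign of the integer exponent $a$; combined with Lemma \ref{LemmaComplexIntegrals} it keeps only the indices $i$ whose $|h'_i|$ lies on the correct side of $1$, matching the four (resp.\ three) families of terms in the proposition. After simplification using $\Delta_{\Phi'}(\check h') = \prod_i h'^{-(n'-1)/2}_i\prod_{i<j}(h'_i - h'_j)$ the formula collapses to the stated one. The main obstacle is the careful bookkeeping of signs in the last two steps: tracking $\varepsilon(\sigma)$, the factor $\varepsilon_{\Psi'_{\S_t,\mathbb{R}}}$, the ambiguity $(-1)^u$, the choice of square roots on the double cover $\check\H'$, and the direction of approach encoded by $\E_{\sigma,\S_t}$, all of which collectively contribute the overall $\pm$ in front of $\C^{2}$.
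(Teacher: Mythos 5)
Your strategy matches the paper's: apply Proposition \ref{Proposition15Juillet} with $\S=\S_t$, exploit that $\W^{\mathfrak{h}}$ is one-dimensional so everything depends on $\sigma$ only through $\sigma(1)$, collapse the Weyl sum over $\mathscr{S}_{n'}$ to a sum over $i=\sigma(1)$ (with the multiplicity $(n'-1)!=|\mathscr{W}(\Z'_{\mathbb{C}},\H'_{\mathbb{C}})|$ cancelling), and evaluate the resulting circle integrals via Lemma \ref{LemmaComplexIntegrals}. The case split in $m$ also matches.

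However, you state the direction of the limit $r\to 1$ for the unit-modulus coordinates exactly backwards, and this is precisely the place where the bookkeeping bites. From the computation of $\langle y'\sigma(w),\sigma(w)\rangle$ and hence of $\Gamma_{\sigma,\S_t}$ carried out in the paper's proof (note also that Theorem \ref{PropositionFinalFormulaThetaPin} refers back \emph{to} this Appendix A computation, not the other way around), the constraint defining the cone is $X'_{\sigma(1)}>0$ when $\sigma(1)\in\{t+1,\ldots,p\}$, giving $r_{\sigma(1)}=e^{-X'_{\sigma(1)}}<1$, and $X'_{\sigma(1)}<0$ when $\sigma(1)\in\{p+t+1,\ldots,n'\}$, giving $r_{\sigma(1)}>1$. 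You claim $|rh'_i|<1$ for $p+t+1\le i\le p+q$ and $|rh'_i|>1$ for $t+1\le i\le p$, which is the opposite. Since Lemma \ref{LemmaComplexIntegrals} retains the indices with $|rh'_i|<1$ when the exponent $-m-1-\tfrac{q-p}{2}\ge 0$, and those with $|rh'_i|>1$ when it is negative, your reversal would place the block $\{p+t+1,\ldots,n'\}$ in the case $m\le -1-\tfrac{q-p}{2}$ and $\{t+1,\ldots,p\}$ in the other, which is the opposite of the stated formula. So the derivation as you set it up would not produce the proposition; the inside/outside determination coming from $\E_{\sigma,\S_t}$ needs to be fixed.
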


\noindent To make the equation shorter, we will denote by $\C$ the constant $\C = \frac{(-1)^{u}}{(p+q-1)!}$.

\begin{proof}

We start by determining the space $\E_{\sigma, \emptyset}$ for $\sigma \in \mathscr{S}_{n'}$. For every $w = w_{1, 1}\E_{1, 1}$ and $y' = (y'_{1}, \ldots, y'_{n'}) \in \mathfrak{h}'$ with $y'_{j} = iX'_{j}, X'_{j} \in \mathbb{R}$, we get:
\begin{eqnarray*}
& & \langle y'\sigma(w), \sigma(w)\rangle=\langle y'(w_{1, 1}\E_{\sigma(1), 1}), w_{1, 1}\E_{\sigma(1), 1}\rangle =
\langle (w_{1, 1}y'_{\sigma(1)}\E_{\sigma(1), 1}), w_{1, 1}\E_{\sigma(1), 1}\rangle = \Im(\overline{w_{1, 1}} \E_{1, \sigma(1)}\Id_{p, q} w_{1, 1}y'_{\sigma(1)}\E_{\sigma(1), 1}) \\
& = & \begin{cases} |w_{1, 1}|^{2} \Im(y'_{\sigma(1)} \E_{1, \sigma(1)} \E_{\sigma(1), 1}) & \text{ if } 1 \leq \sigma(1) \leq p \\ -|w_{1, 1}|^{2} \Im(y'_{\sigma(1)} \E_{1, \sigma(1)} \E_{\sigma(1), 1}) & \text{ if } p+1 \leq \sigma(1) \leq p+q \end{cases}= \begin{cases} X'_{\sigma(1)}|w_{1, 1}|^{2}  & \text{ if } 1 \leq \sigma(1) \leq p \\ -X'_{\sigma(1)}|w_{1, 1}|^{2} & \text{ if } p+1 \leq \sigma(1) \leq p+q \end{cases}
\end{eqnarray*}
In particular,
\begin{equation*}
\Gamma_{\sigma,\emptyset} = \begin{cases}
\left\{y' \in \mathfrak{h}', X'_{\sigma(1)} > 0\right\} & \text{ if } 1 \leq \sigma(1) \leq p \\
\left\{y' \in \mathfrak{h}', X'_{\sigma(1)} < 0\right\} & \text{ if } p+1 \leq \sigma(1) \leq n'
\end{cases}
\end{equation*}
and then
\begin{equation*}
\E_{\sigma,\emptyset} = \exp(i\Gamma_{\sigma,\emptyset}) = \begin{cases} 
\left\{h' \in \H'_{\mathbb{C}}, h' = (e^{-X'_{1}}, \ldots, e^{-X'_{n'}}), X'_{\sigma(1)} > 0 \right\} & \text{ if } 1 \leq \sigma(1) \leq p \\
\left\{h' \in \H'_{\mathbb{C}}, h' = (e^{-X'_{1}}, \ldots, e^{-X'_{n'}}), X'_{\sigma(1)} < 0 \right\} & \text{ if } p+1 < \sigma(1) \leq n'
\end{cases}
\end{equation*}
More generally, for every $\sigma \in \mathscr{S}_{n}$, we get:
\begin{equation*}
\Gamma_{\sigma, \S_{t}} = \begin{cases} \mathfrak{h}' & \text{ if } \sigma(1) \in \underline{\S_{t}} \\ \left\{y \in \mathfrak{h}', X'_{\sigma(1)} > 0\right\} & \text{ if } \sigma(1) \in \left\{t+1, \ldots, p\right\} \\ \left\{y \in \mathfrak{h}', X'_{\sigma(1)} < 0\right\} & \text{ if } \sigma(1) \in \left\{p+t+1, \ldots, n\right\} \end{cases}
\end{equation*}
In particular, 
\begin{equation*}
\E_{\sigma, \S} = \exp  \begin{cases} \left\{h' \in \H'_{\mathbb{C}}, h = \diag(e^{-X'_{1}}, \ldots, e^{-X'_{n}}), X'_{i} \in \mathbb{R} \right\} & \text{ if }  \sigma(1) \in \underline{\S_{t}} \\ \left\{h' \in \H'_{\mathbb{C}}, h' = \diag(e^{-X'_{1}}, \ldots, e^{-X'_{n}}), X'_{i} \in \mathbb{R}, X'_{\sigma(1)} > 0 \right\}  & \text{ if } \sigma(1) \in \left\{t+1, \ldots, p\right\} \\ \left\{h' \in \H'_{\mathbb{C}}, h = \diag(e^{-X'_{1}}, \ldots, e^{-X'_{n}}), X'_{i} \in \mathbb{R}, X'_{\sigma(1)} < 0 \right\} & \text{ if } \sigma(1) \in \left\{p+t+1, \ldots, n\right\} \end{cases}
\end{equation*}
Because the space $\E_{\sigma, \S_{t}}$ only depends on $\sigma(1)$, we will denote this space by $\E_{i, \S_{t}}$ for a $\sigma \in \mathscr{S}_{n'}$ such that $\sigma(1) = i$. 

\noindent We first assume that $n'$ is even, i.e. $k = 0$. Then, according to Proposition \ref{Proposition15Juillet} and that $\det(1-p(\check{h})rp(\check{h}'))_{\sigma \W^{\mathfrak{h}}} = 1 - h(rh')^{-1}_{\sigma(1)}$, we get (up to a constant):
\begin{eqnarray*}
& & \Theta_{\Pi'}(c(\S_{t})p(\check{h}')c(\S_{t})^{-1}) = \C\sum\limits_{\sigma \in \mathscr{S}_{n'}}  \varepsilon(\sigma) \cfrac{\Delta_{\Phi'(\Z')}(\sigma^{-1}(\check{h}'))}{\Delta_{\Phi'}(\check{h}')} \lim\limits_{\underset{r \in \E_{\sigma, \S}}{r \to 1}} \displaystyle\int_{\check{\U}(1)} \cfrac{\overline{\Theta_{\Pi_{m}}(\check{p}(\check{h}))}}{1- h(rh')^{-1}_{\sigma(1)}} d\check{h} \\
 & = & \C^{2} \sum\limits_{j=1}^{t} \cfrac{h'^{\frac{n'-2}{2}}_{j}\left(\prod\limits_{i=1}^{n'} h'^{\frac{1}{2}}_{i}\right)}{\prod\limits_{\underset{i \neq j}{i=1}}^{n'} (h'_{j} - h'_{i})} \displaystyle\int_{\check{\U}(1)} \cfrac{\overline{\Theta_{\Pi_{m}}(\check{p}(\check{h}))}}{1 - he^{-iX_{1} + X_{p+1}}} d\check{h} + \C^{2} \sum\limits_{j=1}^{t} \cfrac{h'^{\frac{n'-2}{2}}_{p+j}\left(\prod\limits_{i=1}^{n'} h'^{\frac{1}{2}}_{i}\right)}{\prod\limits_{\underset{i \neq p+j}{i=1}}^{n'} (h'_{p+j} - h'_{i})} \displaystyle\int_{\check{\U}(1)} \cfrac{\overline{\Theta_{\Pi_{m}}(\check{p}(\check{h}))}}{1 - he^{-iX_{j} - X_{p+j}}} d\check{h} \\
 & + & \C^{2} \sum\limits_{j=t+1}^{p} \cfrac{h'^{\frac{n'-2}{2}}_{j}\left(\prod\limits_{i=1}^{n'} h'^{\frac{1}{2}}_{i}\right)}{\prod\limits_{\underset{j \neq i}{i=1}}^{n'} (h'_{j} - h'_{i})}  \lim\limits_{\underset{0 < r < 1}{r \to 1}}  \displaystyle\int_{\check{\U}(1)} \cfrac{\overline{\Theta_{\Pi_{m}}(\check{p}(\check{h}))}}{1 - h(rh'_{j})^{-1}} d\check{h} + \C^{2} \sum\limits_{j=p+t+1}^{n'}  \cfrac{h'^{\frac{n'-2}{2}}_{j}\left(\prod\limits_{i=1}^{n'} h'^{\frac{1}{2}}_{i}\right)}{\prod\limits_{\underset{j \neq i}{i=1}}^{n'} (h'_{j} - h'_{i})}  \lim\limits_{\underset{r > 1}{r \to 1}} \displaystyle\int_{\check{\U}(1)} \cfrac{\overline{\Theta_{\Pi_{m}}(\check{p}(\check{h}))}}{1 - h(rh'_{i})^{-1}} d\check{h} \\
 & = & - \C^{2} \sum\limits_{j=1}^{t} \cfrac{h'^{\frac{n'-2}{2}}_{j}\left(\prod\limits_{i=1}^{n'} h'^{\frac{1}{2}}_{i}\right)}{\prod\limits_{\underset{i \neq j}{i=1}}^{n'} (h'_{j} - h'_{i})} \cfrac{1}{e^{-iX_{j} + X_{p+j}}}\displaystyle\int_{\check{\U}(1)} \cfrac{\overline{\Theta_{\Pi_{m}}(\check{p}(\check{h}))}}{h - e^{iX_{j} - X_{p+j}}} d\check{h} - \C^{2} \sum\limits_{j=1}^{t} \cfrac{h'^{\frac{n'-2}{2}}_{p+j}\left(\prod\limits_{i=1}^{n'} h'^{\frac{1}{2}}_{i}\right)}{\prod\limits_{\underset{i \neq p+j}{i=1}}^{n'} (h'_{p+j} - h'_{i})} \cfrac{1}{e^{-iX_{j} - X_{p+j}}} \displaystyle\int_{\check{\U}(1)} \cfrac{\overline{\Theta_{\Pi_{m}}(\check{p}(\check{h}))}}{h - e^{iX_{j} + X_{p+j}}} d\check{h} \\
 & - & \C^{2} \sum\limits_{j=t+1}^{p} \cfrac{h'^{\frac{n'-2}{2}}_{j}\left(\prod\limits_{i=1}^{n'} h'^{\frac{1}{2}}_{i}\right)}{\prod\limits_{\underset{j \neq i}{i=1}}^{n'} (h'_{j} - h'_{i})}  \lim\limits_{\underset{0 < r < 1}{r \to 1}} h'_{i} \displaystyle\int_{\check{\U}(1)} \cfrac{\overline{\Theta_{\Pi_{m}}(\check{p}(\check{h}))}}{h - rh'_{j}} d\check{h} - \C^{2} \sum\limits_{j=p+t+1}^{n'}  \cfrac{h'^{\frac{n'-2}{2}}_{j}\left(\prod\limits_{i=1}^{n'} h'^{\frac{1}{2}}_{i}\right)}{\prod\limits_{\underset{j \neq i}{i=1}}^{n'} (h'_{j} - h'_{i})}  \lim\limits_{\underset{r > 1}{r \to 1}} h'_{i} \displaystyle\int_{\check{\U}(1)} \cfrac{\overline{\Theta_{\Pi_{m}}(\check{p}(\check{h}))}}{h - rh'_{j}} d\check{h} \\
& = & - \frac{\C^{2}}{2i\pi}\sum\limits_{j=1}^{t} \cfrac{h'^{\frac{n'-2}{2}}_{j}\prod\limits_{i=1}^{n'} h'^{\frac{1}{2}}_{i}}{\prod\limits_{\underset{i \neq j}{i=1}}^{n'} (h'_{j} - h'_{i})} \cfrac{1}{e^{-iX_{j} + X_{p+j}}}\displaystyle\int_{\U(1)} \cfrac{h^{-m-1-\frac{q-p}{2}}}{h - e^{iX_{j} - X_{p+j}}} dh -  \frac{\C^{2}}{2i\pi}\sum\limits_{j=1}^{t} \cfrac{h'^{\frac{n'-2}{2}}_{p+j}\prod\limits_{i=1}^{n'} h'^{\frac{1}{2}}_{i}}{\prod\limits_{\underset{i \neq p+j}{i=1}}^{n'} (h'_{p+j} - h'_{i})} \cfrac{1}{e^{-iX_{j} - X_{p+j}}} \displaystyle\int_{\U(1)} \cfrac{h^{-m-1-\frac{q-p}{2}}}{h - e^{iX_{j} + X_{p+j}}} dh \\
 & - & \frac{\C^{2}}{2i\pi}\sum\limits_{j=t+1}^{p} \cfrac{h'^{\frac{n'-2}{2}}_{j}\prod\limits_{i=1}^{n'} h'^{\frac{1}{2}}_{i}}{\prod\limits_{\underset{j \neq i}{i=1}}^{n'} (h'_{j} - h'_{j})}  \lim\limits_{\underset{0 < r < 1}{r \to 1}} h'_{i} \displaystyle\int_{\U(1)} \cfrac{h^{-m-1-\frac{q-p}{2}}}{h - rh'_{j}} dh - \frac{\C^{2}}{2i\pi} \sum\limits_{j=p+t+1}^{n'}  \cfrac{h'^{\frac{n'-2}{2}}_{j}\prod\limits_{i=1}^{n'} h'^{\frac{1}{2}}_{i}}{\prod\limits_{\underset{j \neq i}{i=1}}^{n'} (h'_{j} - h'_{i})}  \lim\limits_{\underset{r > 1}{r \to 1}} h'_{i} \displaystyle\int_{\U(1)} \cfrac{h^{-m-1-\frac{q-p}{2}}}{h - rh'_{j}} dh 
\end{eqnarray*}
If $-m-1-\frac{q-p}{2} \geq 0$, i.e. $m \leq -1-\frac{q-p}{2}$. Then, according to Lemma \ref{LemmaComplexIntegrals}, we get:
\begin{eqnarray*}
& & \Theta_{\Pi'_{m}}(c(\S_{t})p(\check{h}')c(\S_{t})^{-1}) \\ 
& = & - \frac{\C^{2}}{2i\pi} \sum\limits_{\underset{j \in J(h')}{j=1}}^{t} \cfrac{h'^{\frac{n'-2}{2}}_{j}\prod\limits_{i=1}^{n'} h'^{\frac{1}{2}}_{i}}{\prod\limits_{\underset{i \neq j}{i=1}}^{n'} (h'_{j} - h'_{i})} \cfrac{1}{e^{-iX_{j} + X_{p+j}}}\displaystyle\int_{\U(1)} \cfrac{h^{-m-1-\frac{q-p}{2}}}{h - e^{iX_{j} - X_{p+j}}} dh - \frac{\C^{2}}{2i\pi} \sum\limits_{\underset{j \in K(h')}{j=1}}^{t} \cfrac{h'^{\frac{n'-2}{2}}_{p+j}\prod\limits_{i=1}^{n'} h'^{\frac{1}{2}}_{i}}{\prod\limits_{\underset{i \neq p+j}{i=1}}^{n'} (h'_{p+j} - h'_{i})} \cfrac{1}{e^{-iX_{j} - X_{p+j}}} \displaystyle\int_{\U(1)} \cfrac{h^{-m-1-\frac{q-p}{2}}}{h - e^{iX_{j} + X_{p+j}}} dh \\
 & - & \frac{\C^{2}}{2i\pi} \sum\limits_{j=t+1}^{p} \cfrac{h'^{\frac{n'-2}{2}}_{j}\prod\limits_{i=1}^{n'} h'^{\frac{1}{2}}_{i}}{\prod\limits_{\underset{j \neq i}{i=1}}^{n'} (h'_{j} - h'_{i})}  \lim\limits_{\underset{0 < r < 1}{r \to 1}} h'_{i} \displaystyle\int_{\U(1)} \cfrac{h^{-m-1-\frac{q-p}{2}}}{h - rh'_{i}} dh \\
  & = & - \C^{2} \sum\limits_{\underset{j \in J(h')}{j=1}}^{t} \cfrac{h'^{-m+p-1}_{j}\prod\limits_{i=1}^{n'} h'^{\frac{1}{2}}_{i}}{\prod\limits_{\underset{i \neq j}{i=1}}^{n'} (h'_{j} - h'_{i})} - \C^{2} \sum\limits_{\underset{j \in K(h')}{j=1}}^{t} \cfrac{h'^{-m+p-1}_{p+j}\prod\limits_{i=1}^{n'} h'^{\frac{1}{2}}_{i}}{\prod\limits_{\underset{i \neq p+j}{i=1}}^{n'} (h'_{p+j} - h'_{i})} - \C^{2} \sum\limits_{j=t+1}^{p} \cfrac{h'^{-m+p-1}_{j}\prod\limits_{i=1}^{n'} h'^{\frac{1}{2}}_{i}}{\prod\limits_{\underset{j \neq i}{i=1}}^{n'} (h'_{j} - h'_{i})} 
\end{eqnarray*}
Similarly, if $m > -1-\frac{q-p}{2}$, we get:
\begin{eqnarray*}
& & \Theta_{\Pi'}(c(\S_{t})p(\check{h}')c(\S_{t})^{-1}) \\ 
& = & - \frac{\C^{2}}{2i\pi} \sum\limits_{\underset{j \in K(h')}{j=1}}^{t} \cfrac{h'^{\frac{n'-2}{2}}_{j}\prod\limits_{i=1}^{n'} h'^{\frac{1}{2}}_{i}}{\prod\limits_{\underset{i \neq j}{i=1}}^{n'} (h'_{j} - h'_{i})} \cfrac{1}{e^{-iX_{j} + X_{p+j}}}\displaystyle\int_{\U(1)} \cfrac{h^{-m-1-\frac{q-p}{2}}}{h - e^{iX_{j} - X_{p+j}}} dh -  \frac{\C^{2}}{2i\pi} \sum\limits_{\underset{j \in J(h')}{j=1}}^{t} \cfrac{h'^{\frac{n'-2}{2}}_{p+j}\prod\limits_{i=1}^{n'} h'^{\frac{1}{2}}_{i}}{\prod\limits_{\underset{i \neq p+j}{i=1}}^{n'} (h'_{p+j} - h'_{i})} \cfrac{1}{e^{-iX_{j} - X_{p+j}}} \displaystyle\int_{\U(1)} \cfrac{h^{-m-1-\frac{q-p}{2}}}{h - e^{iX_{j} + X_{p+j}}} dh \\
 & - & \frac{\C^{2}}{2i\pi} \sum\limits_{j=p+t+1}^{n'} \cfrac{h'^{\frac{n'-2}{2}}_{j}\prod\limits_{i=1}^{n'} h'^{\frac{1}{2}}_{i}}{\prod\limits_{\underset{j \neq i}{i=1}}^{n'} (h'_{j} - h'_{i})}  \lim\limits_{\underset{r > 1}{r \to 1}} h'_{i} \displaystyle\int_{\U(1)} \cfrac{h^{-m-1-\frac{q-p}{2}}}{h - rh'_{j}} dh \\
 & = & \C^{2} \sum\limits_{\underset{j \in J(h')}{j=1}}^{t} \cfrac{h'^{-m+p-1}_{j}\prod\limits_{i=1}^{n'} h'^{\frac{1}{2}}_{i}}{\prod\limits_{\underset{i \neq j}{i=1}}^{n'} (h'_{j} - h'_{i})} + \C^{2} \sum\limits_{\underset{j \in K(h')}{j=1}}^{t} \cfrac{h'^{-m+p-1}_{p+j}\prod\limits_{i=1}^{n'} h'^{\frac{1}{2}}_{i}}{\prod\limits_{\underset{i \neq p+j}{i=1}}^{n'} (h'_{p+j} - h'_{i})} + \C^{2} \sum\limits_{j=p+t+1}^{n'} \cfrac{h'^{-m+p-1}_{j}\prod\limits_{i=1}^{n'} h'^{\frac{1}{2}}_{i}}{\prod\limits_{\underset{j \neq i}{i=1}}^{n'} (h'_{j} - h'_{i})} 
\end{eqnarray*}
The computations are similar if $n'$ is odd.

\end{proof}

\begin{coro}

The value of $\Theta_{\Pi'_{m}}$ on $\widetilde{\H}' = \check{p}(\check{\H}'_{\emptyset})$ is given, up to a constant, by:
\begin{equation*}
\Theta_{\Pi'_{m}}(\check{p}(\check{h}')) = \pm\C^{2} \begin{cases} \prod\limits_{i=1}^{n'} {h'_{i}}^{\frac{1}{2}} \sum\limits_{i=1}^{p}\cfrac{{h'_{i}}^{-m+p-1}}{\prod\limits_{j \neq i} (h'_{i} - h'_{j})} & \text{ if } m \leq -1-\frac{q-p}{2} \\ -\prod\limits_{i=1}^{n'} {h}'^{\frac{1}{2}}_{i} \sum\limits_{i=p+1}^{n'} \cfrac{{h'_{i}}^{-m+p-1}}{\prod\limits_{j \neq i} (h'_{i} - h'_{j})} & \text{ otherwise } \end{cases}
\end{equation*}
where $\C \in \mathbb{R}$.
\label{FinalCorollaryU(1)}

\end{coro}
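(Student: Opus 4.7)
The plan is to deduce this corollary as the $t=0$ specialization of Proposition \ref{PropositionA3}, which already gives the value of $\Theta_{\Pi'_{m}}$ on the Cartan subgroup $\widetilde{\H}'(\S_{t})$ for every $0 \leq t \leq p$. First, I would observe that the compact Cartan $\widetilde{\H}'$ coincides with $\widetilde{\H}'(\S_{0})$ associated with the empty set of strongly orthogonal roots $\S_{0} = \{\emptyset\}$, and that the Cayley transform $c(\S_{0})$ is the identity on $\V'$, so conjugation by $c(\S_{0})$ is trivial and $c(\S_{0})\check{p}(\check{h}')c(\S_{0})^{-1} = \check{p}(\check{h}')$ for every $\check{h}' \in \check{\H}'_{\S_{0}} = \check{\H}'_{\emptyset}$.

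Next, I would inspect the index sets appearing in the formula of Proposition \ref{PropositionA3}. By definition $J(h')$ and $K(h')$ are subsets of $\{1,\ldots,t\}$, so when $t=0$ one has $J(h') = K(h') = \emptyset$ and both sums $\sum_{j \in J(h')}^{t}$ and $\sum_{j \in K(h')}^{t}$ are empty. The remaining third (resp.\ fourth) sum, indexed by $i = t+1,\ldots,p$ (resp.\ $j = p+t+1,\ldots,n'$), collapses to $\sum_{i=1}^{p}$ (resp.\ $\sum_{j=p+1}^{n'}$). Carrying out these substitutions in the two branches of the piecewise formula of Proposition \ref{PropositionA3} directly produces the two branches of the stated corollary, with the global sign and the factor $\C^{2}$ unchanged; the discrepancy between the explicit signs written in Proposition \ref{PropositionA3} and those in the corollary is harmlessly absorbed into the overall $\pm$ prefactor.

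The only verification worth emphasizing is that in the $t=0$ case every factor of the form $h'^{\frac{1}{2}}_{i}$ produced by $\Delta_{\Phi'(\Z')}(\sigma^{-1}(\check{h}'))/\Delta_{\Phi'}(\check{h}')$ really does combine into the compact factor $\prod_{i=1}^{n'} h'^{\frac{1}{2}}_{i}$ of the corollary. This is immediate from the identity
\begin{equation*}
\cfrac{\Delta_{\Phi'(\Z')}(\sigma^{-1}(\check{h}'))}{\Delta_{\Phi'}(\check{h}')} = \varepsilon(\sigma) \cfrac{h'^{\frac{n'-2}{2}}_{\sigma(1)}\prod_{i=1}^{n'} h'^{\frac{1}{2}}_{i}}{\prod_{\underset{i \neq \sigma(1)}{i=1}}^{n'} (h'_{\sigma(1)} - h'_{i})}
\end{equation*}
established just before Notation \ref{NotationA1}, so the factor $\prod_{i=1}^{n'} h'^{\frac{1}{2}}_{i}$ can be pulled out of each summand and placed in front of the sum exactly as in the statement.

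No new analytic input is required beyond what is already used in the proof of Proposition \ref{PropositionA3}; the only mild obstacle is the bookkeeping of the global sign in each of the two cases $m \leq -1-\tfrac{q-p}{2}$ versus $m > -1-\tfrac{q-p}{2}$, which is handled by the $\pm$ in the statement and by the fact that the constant $\C = \frac{(-1)^{u}}{(p+q-1)!}$ does not depend on $h'$. Collecting these observations yields the corollary.
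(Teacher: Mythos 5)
Your proposal is correct and matches the paper's intent: the paper gives no separate proof for the corollary and simply places it after Proposition~\ref{PropositionA3}, signalling that it is the $t=0$ specialization, which is exactly what you do. You correctly note that $\S_{0}=\{\emptyset\}$ makes $c(\S_{0})$ trivial, that $J(h')=K(h')=\emptyset$ when $t=0$ (so the first two sums in each branch vanish), and that the remaining sums collapse to $\sum_{i=1}^{p}$ and $\sum_{j=p+1}^{n'}$ respectively, with the residual sign discrepancy absorbed into the $\pm$.
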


\noindent This result was obtained in \cite[Section~6]{MER}.

\begin{rema}

Assume that $p = 1$, $q = 1$. Then,
\begin{equation*}
\Theta_{\Pi'_{m}}(\tilde{h}') = \pm \begin{cases}  \cfrac{(e^{i\theta-X})^{-m}}{e^{X} - e^{-X}} & \text{ if } m \leq -1 \text{ and } X > 0 \\  -\cfrac{(e^{i\theta+X})^{-m}}{e^{X} - e^{-X}} & \text{ if } m \leq -1 \text{ and } X < 0 \\ \cfrac{(e^{i\theta+X})^{-m}}{e^{X} - e^{-X}} & \text{ if } m \geq 0 \text{ and } X > 0 \\ -\cfrac{(e^{i\theta-X})^{-m}}{e^{X} - e^{-X}} & \text{ if } m \geq 0 \text{ and } X < 0 \end{cases}\,,
\end{equation*}
where $h' = \begin{pmatrix} e^{i\theta}\ch(X) & \sh(X) \\ \sh(X) & e^{i\theta}\ch(X)\end{pmatrix}$.
We recover the results of \cite[Section~7]{MER}.

\label{LastRemarkU(1,1)}

\end{rema}

\section{Cartan subgroups for unitary groups}

\label{AppendixCartanUnitary}

It is well-known that the number of non-conjugated Cartan subgroups of $\G = \U(p, q)$, up to equivalence, is $\min(p, q) +1$ (see \cite{HIRAI}). We recall in this appendix how Cartan subgroups can be parametrised using strongly orthogonal roots (see \cite[Section~2]{SCH}).

\noindent Let $\K = \U(p) \times \U(q)$ be the maximal compact subgroup of $\G$ and $\H$ be the (diagonal) compact Cartan subgroup of $\K$. We denote by $\mathfrak{h}$, $\mathfrak{k}$ and $\mathfrak{g}$ the Lie algebras of $\H$, $\K$ and $\G$ respectively and $\mathfrak{h}_{\mathbb{C}}$, $\mathfrak{k}_{\mathbb{C}}$ and $\mathfrak{g}_{\mathbb{C}}$ their complexifications.

\noindent We denote by $\Delta = \Delta(\mathfrak{g}_{\mathbb{C}}, \mathfrak{h}_{\mathbb{C}})$ be the set of roots, by $\Delta_{c} := \Delta_{c}(\mathfrak{k}_{\mathbb{C}}, \mathfrak{h}_{\mathbb{C}})$ the set of compact roots and by $\Delta_{n} = \Delta \setminus \Delta_{c}$ the set of non-compact roots. Similarly, we denote by $\Psi$ a set of positive roots of $\Delta$ and let $\Psi_{c}$ and $\Psi_{n}$ the subsets of $\Psi$ given by $\Psi_{c} = \Delta_{c} \cap \Psi$ and $\Psi_{n} = \Delta_{n} \cap \Psi$. In particular, 
\begin{equation*}
\mathfrak{g}_{\mathbb{C}} = \bigoplus\limits_{\alpha \in \Delta} \mathfrak{g}_{\mathbb{C}, \alpha}\,,
\end{equation*}
where $\mathfrak{g}_{\mathbb{C}, \alpha} = \left\{X \in \mathfrak{g}_{\mathbb{C}}, [H, X] = \alpha(H)X, H \in \mathfrak{h}_{\mathbb{C}}\right\}$.

\begin{nota}

\begin{enumerate}
\item For every $\alpha \in \Delta$, we fix $X_{\alpha} \in \mathfrak{g}_{\mathbb{C}, \alpha}, Y_{\alpha} \in \mathfrak{g}_{\mathbb{C}, -\alpha}$ and $H_{\alpha} \in i\mathfrak{h}$ such that:
\begin{equation*}
[H_{\alpha}, X_{\alpha}] = 2X_{\alpha}\,, \qquad [H_{\alpha}, Y_{\alpha}] = -2Y_{\alpha}\,, \qquad [X_{\alpha}, Y_{\alpha}] = H_{\alpha}\,, \qquad \overline{H_{\alpha}} = -H_{\alpha} = H_{-\alpha}\,,
\end{equation*}
and such that $\overline{X_{\alpha}} = -Y_{\alpha}$ if $\alpha \in \Delta_{c}$ and $\overline{X_{\alpha}} = Y_{\alpha}$ if $\alpha \in \Delta_{n}$.
\item We say that $\alpha, \beta \in \Delta$ are strongly orthogonal if $\alpha \neq \pm \beta$ and $\alpha \pm \beta \notin \Delta$. We denote by $\Psi^{\st}_{n}$ a maximal family of strongly orthogonal roots of $\Psi_{n}$ (i.e. a subset of $\Psi_{n}$ such that  every pairs $\alpha, \beta \in \Psi^{\st}_{n}$ are strongly orthogonal).
\end{enumerate}

\end{nota}

\noindent For every $\alpha \in \Psi^{\st}_{n}$, we denote by $c(\alpha)$ the element of $\GL(p+q, \mathbb{C})$ given by:
\begin{equation*}
c(\alpha) = \exp\left(\frac{\pi}{4}(Y_{\alpha} - X_{\alpha})\right)\,.
\end{equation*}
For every subset $\S$ of $\Psi^{\st}_{n}$, we denote by $c(\S)$ the element of $\GL(p+q, \mathbb{C})$ defined by
\begin{equation}
c(\S) = \prod\limits_{\alpha \in \S} c(\alpha)\,,
\label{CayleyCS}
\end{equation}
and let 
\begin{equation*}
\mathfrak{h}(\S) = \mathfrak{g} \cap \Ad(c(\S))(\mathfrak{h}_{\mathbb{C}})\,.
\end{equation*}
We denote by $\H(\S)$ the analytic subgroup of $\G$ whose Lie algebra is $\mathfrak{h}(\S)$. Then, $\H(\S)$ is a Cartan subgroup of $\G$ and one can prove that all the Cartan subgroups are of this form (up to conjugation).

\noindent For every $\S \subseteq \Psi^{\st}_{n}$, we will denote by $\H_{\S}$ the subgroup of $\H_{\mathbb{C}}$ given by:
\begin{equation*}
\H_{\S} = c(\S)^{-1}\H(\S)c(\S)\,.
\end{equation*}
where $\H_{\mathbb{C}} = \left\{\diag(\lambda_{1}, \ldots, \lambda_{p+q}), \lambda_{i} \in \mathbb{C}\right\}$.

\noindent Without loss of generality, we assume that $p \leq q$. The set of roots $\Delta$ is given by $\Delta = \left\{\pm(e_{i} - e_{j}), 1 \leq i < j \leq p+q\right\}$, where $e_{i}$ is the linear form on $\mathfrak{h}_{\mathbb{C}} = \mathbb{C}^{p+q}$ given by $e_{i}(\lambda_{1}, \ldots, \lambda_{p+q}) = \lambda_{i}$. In this case, 
\begin{equation*}
\Delta_{c} = \left\{\pm(e_{i} - e_{j}), 1 \leq i < j \leq p\right\} \cup \left\{\pm(e_{i} - e_{j}), p+1 \leq i < j \leq p+q\right\}\,, \qquad \Delta_{n} = \left\{\pm(e_{i} - e_{j}), 1 \leq i \leq p, p+1 \leq j \leq p+q\right\}\,,
\end{equation*}
and the set $\Psi^{\st}_{n}$ can be chosen as $\{e_{t} - e_{p+t}, 1 \leq t \leq p\}$. In particular, $\H(\emptyset) = \H$ and if $\S_{t} = \{e_{1} - e_{p+1}, \ldots, e_{t} - e_{p+t}\}, 1 \leq t \leq p$, we get:
\begin{equation*}
\H(\S_{t}) = \exp\left(\bigoplus\limits_{j=t+1}^{p} i\mathbb{R}\E_{j, j} \oplus \bigoplus\limits_{j=p+t+1}^{p+q} i\mathbb{R}\E_{j, j} \oplus \bigoplus\limits_{j=1}^{t} i\mathbb{R}(\E_{j, j} + \E_{p+j, p+j}) \oplus \bigoplus\limits_{j=1}^{t}\mathbb{R}(\E_{j, p+j} + \E_{p+j, j})\right)\,,
\end{equation*}
and
\begin{equation}
\H_{\S_{t}} = \left\{h = \diag(e^{iX_{1} - X_{p+1}}, \ldots, e^{iX_{t} - X_{p+t}}, e^{iX_{t+1}}, \ldots, e^{iX_{p}}, e^{iX_{1} + X_{p+1}}, \ldots, e^{iX_{t} + X_{p+t}}, e^{iX_{p+t+1}}, \ldots, e^{iX_{2p+1}}), X_{j} \in \mathbb{R}\right\}\,.
\label{DiagonalCartanSubgroupHS}
\end{equation}

\begin{rema}

As explained in \cite[Proposition~2.16]{SCH}, two Cartan subalgebras $\mathfrak{h}(\S_{1})$ and $\mathfrak{h}(\S_{2})$, with $\S_{1}, \S_{2} \subseteq \Psi^{\st}_{n}$, are conjugate if and only if there exists an element of $\sigma \in \mathscr{W}$ sending $\S_{1} \cup (-\S_{1})$ onto $\S_{2} \cup (-\S_{2})$.

\label{RemarkAppendixB} 

\end{rema}

\section{The character $\varepsilon$}

\label{AppendixEpsilon}

Let $(\W, \langle \cdot, \cdot\rangle)$ be a real symplectic space, $\Sp(\W)$ the corresponding group of isometries and $\widetilde{\Sp}(\W)$ its metaplectic cover as in Equation \eqref{MetaplecticGroup}.

\noindent Let $\W = \X \oplus \Y$ be a complete polarization of $\W$. We denote by $\Z$ the subgroup of $\Sp(\W)$ preserving both $\X$ and $\Y$. In particular, we get that 
\begin{equation*}
\Z = \left\{\begin{pmatrix} g & 0 \\ 0 & (g^{-1})^{t} \end{pmatrix}, g \in \GL(\X)\right\} \approx \GL(\X)\,.
\end{equation*}
We define a double cover $\widetilde{\GL}(\X)$ of $\GL(\X)$ by
\begin{equation*}
\widetilde{\GL}(\X) = \left\{(g, \eta) \in \GL(\X) \times \mathbb{C}^{\times}\,, \eta^{2} = \det(g)\right\}, \qquad \widetilde{\GL}(\X) \ni (g, \eta) \to g \in \GL(\X)\,.
\end{equation*}
As recalled in \cite[Section~6]{TOM2}, the restriction map
\begin{equation*}
\Z \ni g \to g_{|_{\X}} \in \GL(\X)
\end{equation*}
lifts to a group isomorphism
\begin{equation*}
\widetilde{\Z} \ni \tilde{g} \to (g_{|_{\X}}, \eta) \in \widetilde{\GL}(\X)\,,
\end{equation*}
where $\eta = \eta(\tilde{g})$ is defined on $\Z^{c} = \left\{\tilde{g} \in \widetilde{\Z}, \det(g-1)_{\W} \neq 0\right\}$ by the following formula:
\begin{equation*}
\eta(\tilde{g}) = \cfrac{\Theta(\tilde{g})}{|\Theta(\tilde{g})|} |\det(g_{|_{\X}})|^{\frac{1}{2}}\,.
\end{equation*}
We denote by $\varepsilon$ the function on $\widetilde{\Z}$ given by:
\begin{equation*}
\varepsilon: \widetilde{\Z} \ni \tilde{g} \to \varepsilon(\tilde{g}) = \cfrac{\eta(\tilde{g})}{|\eta(\tilde{g})|} \in \mathbb{C}\,.
\end{equation*}
One can easily prove that $\varepsilon$ is a character of $\widetilde{\Z}$ with values in the set $\left\{\pm 1, \pm i\right\}$ such that
\begin{equation}
\varepsilon(\tilde{g}) = \cfrac{\Theta(\tilde{g})}{|\Theta(\tilde{g})|}, \qquad \left(\tilde{g} \in \widetilde{\Z^{c}}\right).
\label{LemmaAppendixC1}
\end{equation}

\begin{lemme}

For every $\tilde{g} \in \widetilde{\Z^{c}}$, we get:
\begin{equation*}
\Theta(\tilde{g})^{2} = \det(g_{|_{X}})^{-1} \det\left(\frac{1}{2}(c(g_{|_{X}}) + 1)\right)^{2}.
\end{equation*}

\end{lemme}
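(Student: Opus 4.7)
The plan is to unpack $\Theta(\tilde g)^{2}$ directly from the defining relation of $\widetilde{\Sp}(\W)$, carry out a block computation using the polarization $\W = \X \oplus \Y$, and match the answer to the right-hand side via the classical rational formula for the Cayley transform of an endomorphism of $\X$. Concretely, by Equation \eqref{MetaplecticGroup},
\begin{equation*}
\Theta(\tilde g)^{2} = i^{\dim_{\mathbb{R}}(\W)} \det(\J_{g})^{-1}_{\J_{g}(\W)},
\end{equation*}
and since $\tilde g \in \widetilde{\Z^{c}}$ the element $g-1$ is invertible on $\W$, whence $\J_{g} = \J^{-1}(g-1)$ is invertible and $\J_{g}(\W) = \W$. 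Using that any compatible positive complex structure satisfies $\det_{\mathbb{R}}(\J) = 1$ (its $\pm i$-eigenspaces on complexification have equal dimension $\tfrac{1}{2}\dim_{\mathbb{R}}\W$, so their product of eigenvalues is $1$), this simplifies to $\Theta(\tilde g)^{2} = i^{\dim_{\mathbb{R}}\W}/\det(g-1)_{\W}$.

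Next I would write $g$ in block form relative to $\W = \X \oplus \Y$. Because $g$ lies in $\Z$, the identification of $\widetilde{\Z}$ recalled just before the lemma gives $g = \diag\bigl(g_{|_{X}},\,(g_{|_{X}}^{-1})^{t}\bigr)$ in dual bases of $\X$ and $\Y$. Setting $n = \dim_{\mathbb{R}}(\X) = \tfrac{1}{2}\dim_{\mathbb{R}}(\W)$ and using the factorization $g_{|_{X}}^{-1} - 1 = -g_{|_{X}}^{-1}(g_{|_{X}} - 1)$, one computes
\begin{equation*}
\det(g-1)_{\W} = \det(g_{|_{X}} - 1)\det\bigl((g_{|_{X}}^{-1})^{t} - 1\bigr) = (-1)^{n}\det(g_{|_{X}})^{-1}\det(g_{|_{X}} - 1)^{2}.
\end{equation*}
Combined with $i^{\dim_{\mathbb{R}}\W} = (-1)^{n}$, the signs cancel and I obtain the intermediate identity $\Theta(\tilde g)^{2} = \det(g_{|_{X}})/\det(g_{|_{X}} - 1)^{2}$.

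Finally, since $g_{|_{X}} - 1$ is invertible on $\X$, the Cayley transform $c(g_{|_{X}})$ introduced in the paper reduces to the familiar expression $(g_{|_{X}} + 1)(g_{|_{X}} - 1)^{-1}$, and therefore
\begin{equation*}
\tfrac{1}{2}\bigl(c(g_{|_{X}}) + 1\bigr) = \tfrac{1}{2}\bigl[(g_{|_{X}} + 1) + (g_{|_{X}} - 1)\bigr](g_{|_{X}} - 1)^{-1} = g_{|_{X}}(g_{|_{X}} - 1)^{-1}.
\end{equation*}
Taking determinants, squaring, and multiplying by $\det(g_{|_{X}})^{-1}$ gives $\det(g_{|_{X}})/\det(g_{|_{X}} - 1)^{2}$, which matches $\Theta(\tilde g)^{2}$ by the preceding paragraph and closes the proof. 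The only genuinely delicate points are verifying the sign $(-1)^{n}$ picked up from $g_{|_{X}}^{-1} - 1$ and the normalization $\det_{\mathbb{R}}(\J) = 1$; all remaining steps are elementary block algebra and a direct calculation with $(g_{|_{X}} \pm 1)$.
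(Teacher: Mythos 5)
Your proof is correct and follows essentially the same route as the paper's: both unpack $\Theta(\tilde{g})^{2}$ from the defining relation of the metaplectic cover, exploit the block form $g = \diag\bigl(g_{|_{\X}},(g_{|_{\X}}^{-1})^{t}\bigr)$ to reduce $\det(g-1)_{\W}$ to $\det(g_{|_{\X}})\det(g_{|_{\X}}-1)^{-2}$, and finish with the identity $\tfrac{1}{2}(c(g_{|_{\X}})+1)=g_{|_{\X}}(g_{|_{\X}}-1)^{-1}$. The only minor difference is that you spell out the normalization $\det_{\mathbb{R}}(\J)=1$, which the paper absorbs implicitly when writing $\Theta(\tilde{g})^{2}=\det\bigl(i(g-1)\bigr)^{-1}$.
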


\begin{proof}

Then, for every $\tilde{g} \in \widetilde{\Z^{c}}$, we get:
\begin{eqnarray*}
\Theta(\tilde{g})^{2} & = & \det(i(g-1))^{-1} = (-1)^{\frac{\dim_{\mathbb{R}}(W)}{2}} \det(g_{|_{X}} - 1)^{-1} \det(g_{|_{Y}} - 1)^{-1} \\
                                & = & \det(g_{|_{X}} - 1)^{-1} \det(1 - g_{|_{Y}})^{-1} = \det(g_{|_{X}} - 1)^{-1} \det(1 - g^{-1}_{|_{X}})^{-1} \\
                                & = & \det(g_{|_{X}} - 1)^{-1} \det(1 - g^{-1}_{|_{X}})^{-1} = \det(g_{|_{X}}) \det(g_{|_{X}} - 1)^{-2}
\end{eqnarray*}
We have:
\begin{equation*}
\frac{1}{2}\left(c(g_{|_{X}}) + 1\right) = \frac{1}{2}\left((g_{|_{X}} + 1)(g_{|_{X}} - 1)^{-1} + 1\right) = \frac{1}{2}\left((g_{|_{X}} + 1)(g_{|_{X}} - 1)^{-1} + (g_{|_{X}} - 1)(g_{|_{X}} - 1)^{-1}\right) = g_{|_{X}}(g_{|_{X}} - 1)^{-1}\,.
\end{equation*}
Then,
\begin{equation*}
\Theta(\tilde{g})^{2} = \det(g_{|_{X}}) \det(g_{|_{X}} - 1)^{-2} = \left(\det(g_{|_{X}}) \det(g_{|_{X}} - 1)^{-1}\right)^{2} \det(g_{|_{X}})^{-1} = \det(g_{|_{X}})^{-1} \det\left(\frac{1}{2}(c(g_{|_{X}}) + 1)\right)^{2}\,.
\end{equation*}

\end{proof}

\noindent We define by $\det^{-\frac{1}{2}}_{\X}(\tilde{g})$ the following quantity: 
\begin{equation*}
\det^{-\frac{1}{2}}_{\X}(\tilde{g}) = \Theta(\tilde{g}) \left| \det\left(\frac{1}{2}(c(g_{|_{X}}) + 1)\right) \right|^{-1}\,.
\end{equation*}

\noindent In particular, for every $\tilde{g} \in \widetilde{\Z^{c}}$, we get:
\begin{equation}
\varepsilon(\tilde{g}) = \cfrac{\Theta(\tilde{g})}{|\Theta(\tilde{g})|} = \cfrac{\det^{-\frac{1}{2}}_{\X}(\tilde{g})}{|\det^{-\frac{1}{2}}_{\X}(\tilde{g})|} \qquad \left(\tilde{g} \in \widetilde{\Z^{c}}\right).
\label{LemmaAppendixC2}
\end{equation}

%\bibliographystyle{plain}
%\bibliography{BiblioDraft}

\bibliographystyle{plain}

\end{document}